\newcommandx{\abs}[2][1=\@empty]{#1\lvert #2 #1\rvert}
\newcommandx{\norm}[3][1=\@empty,3=\@empty]{#1\lVert #2 #1\rVert_{#3}}
\renewcommand{\div}{\operatorname{div}}
\newcommandx{\PT}{\operatorname{PT}}
\newcommandx{\M}{\mathcal{M}}
\newcommand{\T}{\mathrm{T}}
\newcommand{\E}{\mathcal{E}}
\DeclareMathOperator*{\argmin}{arg\,min}
\newtheorem{theorem}{Theorem}[section]
\newtheorem{definition}[theorem]{Definition}
\newtheorem{lemma}[theorem]{Lemma}
\newtheorem{remark}[theorem]{Remark}
\newtheorem{corollary}[theorem]{Corollary}
\title{A Graph Framework for Manifold-valued Data}
\author{Ronny Bergmann%
	\thanks{Felix-Klein-Zentrum, Fachbereich für Mathematik,%
	Technische Universität Kaiserslautern, 67663 Kaiserslautern, Germany.\newline
	\texttt{bergmann@mathematik.uni-kl.de}}%
	\and%
	Daniel Tenbrinck%
	\thanks{Institute for Computational and Applied Mathematics,
	Westfälische Wilhelms-Universität Münster, 48149 Münster, Germany.\newline
	\texttt{daniel.tenbrinck@uni-muenster.de}}%
}
\date{October 2, 2017}
\begin{document}
\maketitle
\begin{abstract}
Graph-based methods have been proposed as a unified framework for discrete
calculus of local and nonlocal image processing methods in the recent years.
In order to translate variational models and partial differential equations to
a graph, certain operators have been investigated and successfully
applied to real-world applications involving graph models.
So far the graph framework has been limited to real- and vector-valued functions on Euclidean domains.
In this paper we generalize this model to the case of manifold-valued data.
We introduce the basic calculus needed to formulate variational models and
partial differential equations for manifold-valued functions and discuss the
proposed graph framework for two particular families of operators, namely,
the isotropic and anisotropic graph~$p$-Laplacian operators, $p\geq1$.
Based on the choice of $p$ we are in particular able to solve optimization
problems on manifold-valued functions involving
total variation ($p=1$) and Tikhonov ($p=2$) regularization.
Finally, we present numerical results from processing both synthetic as well
as real-world manifold-valued data, e.g., from diffusion tensor imaging (DTI) and light detection and ranging (LiDAR) data.
\end{abstract}

\section{Introduction}
Variational methods and partial differential equations (PDEs) play a key role
for modeling and solving image processing tasks. Typically, in the setting of
vector-valued functions the respective continuous formulations are discretized,
e.g., by using finite differences or finite elements. These discretization
schemes are so popular because they are well-investigated and in general
preserve important properties of the continuous models, e.g., conservation laws
or maximum principles. Recently, the trend is to establish numerical
discretization schemes for functions not living on subsets of Euclidean spaces
but on surfaces, which might be given as manifolds in a continuous setting or as
finite point clouds in a discrete setting. The discretization of differential
operators becomes especially challenging for the latter case as there are
a-priori no explicit neighborhood relationships for the raw point cloud data.
Currently, there is high interest in translating variational methods and PDEs
to discrete data, which are modeled by finite weighted graphs and networks.
In fact, any discrete data can be represented by a finite graph in which the
vertices are associated to the data and its edges correspond to
relationships within the data. This modeling approach has led to interesting
applications in mathematical image processing and machine learning so far.
One major advantage of using graph-based methods is that they unify local and
nonlocal methods. This is due to the reason that one may model data
relationships not only based on geometric proximity but also based on
similarity of features. Furthermore, graphs can model
arbitrary spatial neighborhoods, e.g., when the domain itself is a submanifold
of the space the measurements are taken on.
In order to translate and solve PDEs on graphs, different discrete vector
calculi have been proposed in the literature, e.g., see~\cite{GP10} and
references therein.
One rather simple concept is to use discrete partial
differences~\cite{ELB08,GO08}. This mimetic approach allows to solve PDEs on
both regular as well as irregular data domains by replacing
continuous partial differential operators, e.g., gradient or divergence, by a
reasonable discrete analogue. By employing this approach many important tools and
results from the continuous setting can be transferred to finite weighted
graphs.

On the other hand, variational methods from image processing have recently been
generalized from real and vectorial data in Euclidean spaces to the case of
manifold-valued data, most prominently the total variation (TV)
regularization~\cite{SC11,CS13,LSKC13,WDS14} including efficient
algorithms~\cite{BPS16,BCHPS16} and second order differences based
models~\cite{BLSW14,BBSW16}. These manifold-valued images occur for example in
interferometric synthetic aperture radar (InSAR) imaging~\cite{MF98,BRF00,
DDT11}, where the measured phase-valued data may be noisy and/or incomplete.
Sphere-valued data appears, e.g., in directional analysis~\cite{KS02,MJ00}.
Data items with values in the special rotation group~\(\operatorname{SO}(3)\)
(or a quotient manifold thereof) are used to process orientations (including
certain symmetries) e.g., in electron backscattered diffraction imaging
(EBSD)~\cite{KWAD93,BHJPSW10,ASWK93}. Another application is diffusion tensor
imaging (DT-MRI)~\cite{BML94,FJ07,PFA06,VBK13}, where the diffusion tensors can
be represented as \(3\times 3\) symmetric positive definite matrices, which
also form a manifold. In general when dealing with covariance matrices the
given data is also a set of symmetric positive definite matrices of
size~\(n\times n\). Finally, multivariate Gaussian distributions can be
characterized as values on a hyperbolic space, namely the Poincaré
half-space~\cite{AV14}. Two examples are shown in Figure~\ref{fig:ExIntro}.
\begin{figure}\centering
  \begin{subfigure}[b]{.43\textwidth}\centering
    \includegraphics{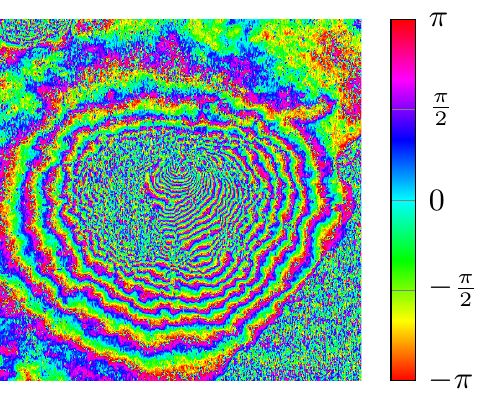}
    \caption{InSAR data of Mt.~Vesuvius from~\cite{RPG97}}
    \label{subfig:ExIntro:InSAR}
  \end{subfigure}
  \begin{subfigure}[b]{.54\textwidth}\centering
    \includegraphics[width=.75\textwidth]{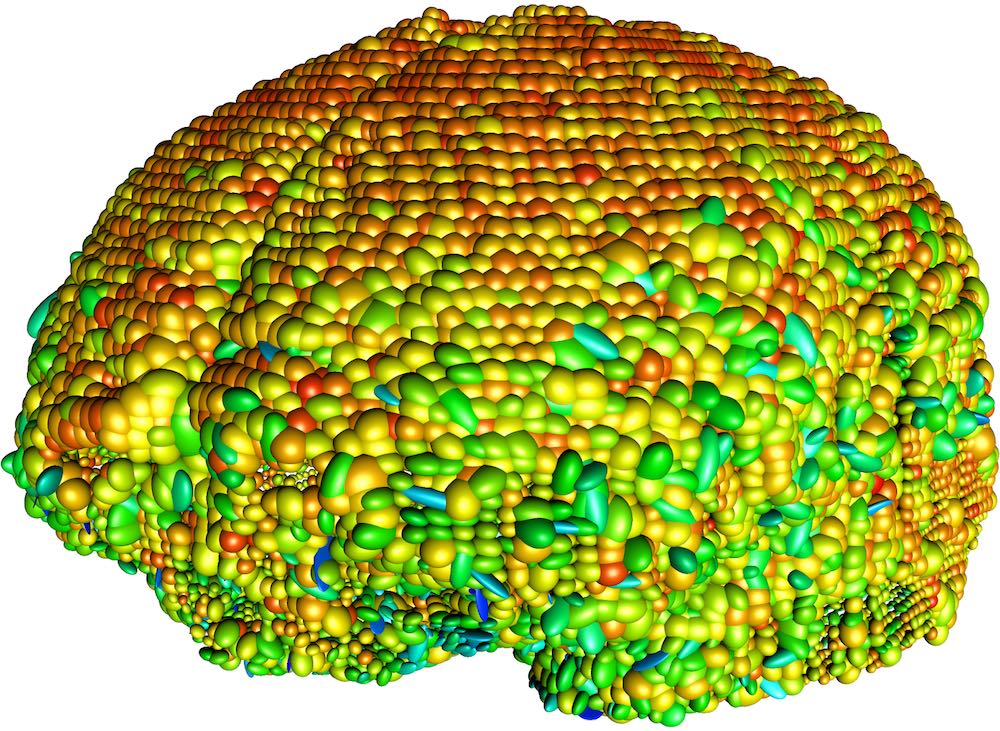}
    \caption{DT-MRI data on the surface of a human brain, extracted from~\cite{Camino}}
    \label{subfig:ExIntro:DT-MRI-Surface}
  \end{subfigure}
  \caption{Two examples of manifold-valued data:\ %
  (\subref{subfig:ExIntro:InSAR}) an Image of phase valued data in InSAR imaging
  (\subref{subfig:ExIntro:DT-MRI-Surface}) diffusion tensors from DT-MRI given on an implicit surface.}
  \label{fig:ExIntro}
\end{figure}
  The data of Mt.~Vesuvius shown in Fig.~\ref{subfig:ExIntro:InSAR})
  obtained in InSAR measurements is available from~\cite{RPG97}\footnote{%
see~\url{https://earth.esa.int/workshops/ers97/program-details/speeches/rocca-et-al/}%
}.
The data of a phase value in each pixel is given on the regular rectangular
image grid. Each pixel represents the “height modulo wavelength” at the
measured position, i.e.~a phase. Note that the distance measure for phase
values is different from the classical images with values on the real line.
This is indicated by the hue color map. Values near both the minimal and
maximal value are again close with respect to the distance on the sphere
\(\mathbb S^1\). In Figure~\ref{subfig:ExIntro:DT-MRI-Surface}) data measured
on the implicitly given surface of the human brain is shown. The data is
available within the Camino toolbox~\cite{Camino}\footnote{see
\url{http://camino.cs.ucl.ac.uk}}. Within the extracted surface data set, each
pixel is given on the implicit surface, see Section~\ref{s:DT-MRI} for details
on the extraction, while each pixel is a value representing the diffusion
tensor at this point, i.e.~a symmetric positive definite matrix. These are
visualized using their eigenvectors and eigenvalues as main axes and main axes
lengths, respectively. Since the data is given on an implicit surface, the
vicinity of pixel or the modeling of pixel being neighbors, can be done using a
graph. This graph has the pixel as their nodes and edges to neighboring pixel,
e.g.~whenever two pixel are below a certain threshold in the \(\mathbb R^3\)
the data is measured in.

All the mentioned applications including the two examples from Figure~\ref{fig:ExIntro} have in common that they
resemble manifold-valued images or more general manifold-valued data. This
yields discrete energy functionals that map from a tensor product of a manifold
with itself (the size of the number of pixels) into the (extended) real line.
Starting from the early '90s~\cite{Udr94} several groups worked on optimization
methods on manifolds, e.g.~Riemannian gradient, subgradient and proximal point
methods~\cite{FO98,FO02,AF05,GH14}, Newton iterations, e.g., see the
book~\cite{AMS08} for algorithms on matrix manifolds. In all these methods
there are three main challenges that emerge for the case of Riemannian
manifolds: (i) there is (in general) no additive group on a Riemannian
manifold, (ii) straight lines are replaced by geodesics, i.e.,~the space itself
is curved and this curvature has to be taken care of, and most importantly for
variational methods, (iii) there is a notion of global convexity only for the
case of Hadamard manifolds. These manifolds have nonpositive sectional
curvature and many tools from optimization have been transferred to these
spaces, see the monograph~\cite{Bac14}.
In this paper we combine for the first time differential operators on finite
weighted graphs and optimization methods on Riemannian manifolds
to construct local and nonlocal methods for manifold-valued data based on
variational models and partial differential equations.

\subsection{Related work}
\label{ss:related_work}
Due to the growing interest in graph-based modeling in the recent years there
have been many sophisticated methods based on nonlocal models and finite
weighted graphs. While one part of the community has concentrated on the theory
of nonlocal methods~\cite{AGP15,BO04,GO08,LPSS15} and the translation of finite
weighted graphs to continuous models~\cite{GS16,GSBLB16,LBB08}, other groups
investigated graph operators and their applications in the discrete setting.
As we are proposing a discrete graph framework for manifold-valued data in this
work, we will focus our discussion of related work on the latter research
domain.
Several groups have put their efforts in the translation of differential
operators and related PDEs from the continuous setting to finite weighted
graphs and the study of their discrete properties, such as the Ginzburg-Landau
functional \cite{GB12,CGSRF17}, the Allen-Cahn equation \cite{GGOB14},
variational~$p$-Laplacians \cite{ELB08,KSS12,ETT15} and its application to
spectral clustering~\cite{BH09}, and Hamilton-Jacobi
equations~\cite{M14,TEL16}. In addition to discrete data defined on regular
grids, recent works have investigated the feasibility of modeling data defined
on discrete surfaces and raw point clouds using finite weighted
graphs~\cite{BM16,GS16,LEL14,SOZ16,TEL16}. Especially the latter scenario of
point cloud processing is a difficult task, since there is a-priori no
connectivity given for the acquired points.
In particular, in a previous work with Elmoataz et al.~in \cite{ETT15} we have
studied variants of the graph~$p$-Laplacian and the graph $\infty$-Laplacian
and discussed their relationship to continuous operators such as the
variational $p$-Laplacians, the game~$p$-Laplacian or the
nonlocal~$p$-Laplacian, which occur as models in many domains, e.g., in physics, game theory, biology,
or economy. With respect to image processing and machine learning applications
the graph $p$-Laplacian and the graph $\infty$-Laplacian have been successfully
used for denoising, segmentation, and inpainting of images but also for general
data processing and clustering \cite{ELB08,ETT15,LEL14,SOZ16}.

All the above listed graph-based methods share in common that they can only be
applied to real- or vector-valued data. To the best of our knowledge, there is
no generalization of finite weighted graphs to manifold-valued data so far.
However, for manifold-valued images and 3D data sets, the same tasks arise as
in usual image processing, e.g.,~denoising, inpainting or segmentation. Recently
several works tackled these tasks such as~\cite{BW16,CS13,SC11} for inpainting,
or~\cite{APSS16,BFPS16,BT15} for segmentation of such data.
For denoising the TV approach or Rudin-Osher-Fatemi (ROF)~\cite{ROF92} model was introduced
by~\cite{WDS14,LSKC13} and generalized
to second order methods in~\cite{BLSW14,BBSW16,BW15,BW16}.
Furthermore, for the ROF model
half-quadratic minimization~\cite{BCHPS16} and the Douglas-Rachford
algorithm~\cite{BPS16} have led to a significant increase in computational efficiency. Another approach in~\cite{LPS16} uses second order statistics and employs a nonlocal denoising
method. All the discussed methods share in common that they work on discrete regular grids only, e.g., on pixel or voxel grids.

It is possible to embed every~\(d\)-dimensional manifold into an Euclidean space 
of at most dimension~\(2d\) due to the theorem of Whitney~\cite{Whi36},
or even use an isometric embedding following the theorem of Nash~\cite{Nas56}.
However, a major disadvantage is that this might increase the dimension of the data,
by a factor up to two following Whitney or even more when relying on an isometric
 embedding. While embedding is sometimes easy, e.g., for the sphere
 \(\mathbb S^d\subset\mathbb R^{d+1}\), it is often beneficial to use the 
intrinsic metric from an application point of view. For example for measurements 
on the earth the arc length is needed instead of the Euclidean norm in the
embedding space \(\mathbb R^3\). There are two further problems when
working in the embedding space. First, minimizing within the embedding space can be
 done by adding an enforcing constraint to a variational model, e.g., the indicator function~\(\iota_{\mathcal M}\) being \(0\) within the manifold and~\(+\infty\) otherwise.
These can easily be introduced into real-valued state-of-the-art methods like
alternating direction method of multipliers (ADMM)~\cite{GM76,GM75}. For example in
~\cite{RTKB14,RWTKB14} a matrix-valued TV functional was introduced using the ADMM for 
the classical TV functional extended by a such projection.
However, when considering the example of symmetric positive definite matrices,
the set is open. Hence the constraint can not be enforced by introducing a projection. The authors in the aforementioned papers project onto the \(\varepsilon\)-relaxed set or the closure and observe numerical convergence. However, from the mathematical viewpoint there remains the open question how to model such embeddings.
Furthermore, several methods like infimal convolution~\cite{CL97}, which was 
recently generalized in~\cite{BFPS17,BFPS17a} to Riemannian manifolds, split a 
signal or data set into several parts. When working in the embedding space directly these
parts also have to be regarded in the embedding space and lose their 
manifold-valued interpretation. For these reasons working intrinsically on manifolds is
often beneficial.

\subsection{Own contribution}
In this paper we introduce a novel graph framework for manifold-valued data.
On the one hand our approach generalizes manifold-valued image processing
models to arbitrary neighborhoods and discretizations which are modeled by the
topology of the graph.
Furthermore, this work also introduces a unified framework for both local and
nonlocal methods for manifold-valued data processing.
For local methods our framework introduces the possibility to process data not
only on a pixel grid, but also for the case that the measurements are taken on
surfaces. This surface might be explicitly given, e.g., measurements on the
surface of a sphere or just implicitly by a point cloud.
For the nonlocal case this framework unifies all methods for which vicinity is
defined via the similarity of features, e.g., adaptive filtering methods or
patch-based distances. On the other hand the data items are manifold-valued and
hence a huge variety of data measurement modalities are incorporated in this
framework.
The proposed framework is very flexible as it consists of three independent
parts, namely manifold specific operations, graph construction and operators,
and numerical solvers. Each of these parts can be exchanged and enhanced
without effecting the other modules. 
By introducing a comprehensive mathematical framework we also derive a notion
of an anisotropic as well as an isotropic manifold-valued
graph \(p\)-Laplacian.
For the special case of the manifold being an Euclidean space, the operators in
our framework simplify to the real- and vector-valued graph \(p\)-Laplacians. 
Thus, our approach can be interpreted as a generalization of well-known
discrete graph operators to the manifold-valued setting.
From the view of variational modeling in convex optimization we further derive
optimality conditions for a family of energy functionals corresponding to
denoising of manifold-valued data. Finally, by looking at the connection of
these optimality conditions to parabolic PDEs, we derive a simple algorithm to compute minimizers of the respective energy
functionals. We demonstrate the flexibility and performance of our approach on a variety of
synthetic as well as real world applications and solve all these with the
same universal numerical algorithm.

\subsection{Organization}
In Section~\ref{s:preliminaries} we first introduce the basic notation of
finite weighted graphs and the needed tools from differential geometry to perform
data processing on Riemannian manifolds.
Then we introduce our graph framework for manifold-valued functions in 
Section~\ref{s:our_framework} and define discrete differential operators to 
perform a huge variety of processing tasks on manifold-valued data. 
Subsequently, we discuss a special class of problems in
Section~\ref{s:numerics}, namely variational denoising tasks.
We formulate the respective energy functionals, derive necessary optimimality
conditions, and present two simple numerical schemes to compute solutions of
these problems.
In Section \ref{s:applications} we demonstrate the performance of our
approach on several synthetic as well as real-world applications.
Finally, we conclude our paper with a discussion and a short outlook to future
work in Section~\ref{s:conclusion}.

\section{Mathematical foundation}
\label{s:preliminaries}
In this section we introduce the mathematical concepts and notations needed to
define graph operators for manifold-valued data. We begin by giving the basic
definitions of finite weighted graphs in Section~\ref{ss:graphs_real}.
Subsequently, in Section~\ref{ss:prel_man} we introduce the necessary notation
from differential geometry to describe (complete) Riemannian manifolds,
tangent spaces, and functions defined on a manifold.
%
%
\subsection{Finite weighted graphs}
\label{ss:graphs_real}
Finite weighted graphs are present in many different fields of
research, e.g., image processing~\cite{ELB08,CGSRF17,ZB11},
machine learning~\cite{BM16,GSBLB16,ZB11},
or network analysis~\cite{LC12,M14,SNFOV13} as they allow to model and process
discrete data of arbitrary topology.
Furthermore, one is able to translate variational methods and partial
differential equations to finite weighted graphs and apply these to both local 
and nonlocal problems in the same unified framework.
Although their exact description is application dependent, there exists a widely
used consent of basic concepts and definitions for finite weighted graphs in the literature~\cite{ELB08,GGOB14,GO08}.
In the following we recall these basic concepts and the respective mathematical 
notation, which we will need to introduce a general graph framework for 
manifold-valued functions in Section~\ref{s:our_framework} below.

\begin{definition}[Finite weighted graph]
A \emph{finite weighted (directed) graph}~$G$ is defined as a
triple~$G = (V, E, w)$ for which
\begin{itemize}
  \item $V = \{1, \ldots, n\}, n\in\mathbb{N}$, is a finite set of indices 
  denoting the \emph{vertices},
  \item $E \subset V \times V$ is a finite set of (directed) \emph{edges}
  connecting a subset of vertices,
  \item $w \colon E \rightarrow \mathbb{R}^+$ is a nonnegative
  \emph{weight function} defined on the edges of the graph.
\end{itemize}
\end{definition}
Typically, for given application data each \emph{graph vertex}~$u \in V$
models an entity in the data structure, e.g., elements of a finite set, pixels 
in an image, or nodes in a network. Due to the abstract nature of the graph 
structure it is also possible to identify sets of entities by a vertex, e.g., 
when summarizing vertices in hierarchical graphs \cite{HLE13}.
It is important to distinguish between abstract data entities modeled by graph
vertices and their respective characteristics, which will be modeled by vertex
functions defined below.
A \emph{graph edge} \((u,v)\in E\) between a start node \(u\in V\) and an end
node \(v\in V\) models certain relationships between two entities, e.g., 
geometric neighborhoods, interactions, or similarity relationships, depending on the 
given application. In our case, we consider \emph{directed} edges,
i.e.,~\((u,v)\in E \nLeftrightarrow (v,u) \in E\) in general.

\begin{definition}[Neighborhood]
A node $v \in V$ is called a \emph{neighbor} of the
node~$u \in V$ if there exists an edge $(u,v) \in E$. We abbreviate this
as $v \sim u$, which reads as “\(v\) is a neighbor of \(u\)”.
If on the other hand \(v\) is not a neighbor of \(u\), we use~\(v\not\sim u\).
We further define the \emph{neighborhood}~\(\mathcal N(u)\) of a
vertex~$u \in V$ as $\mathcal{N}(u) \coloneqq \{ v\in V \colon v \sim u \}$.
The \emph{degree} of a vertex $u \in V$ is defined as the amount of its 
neighbors $\operatorname{deg}(u) = \lvert \mathcal{N}(u)\rvert$.
\end{definition}

Finally, the \emph{weight function} is the central element to model the
significance of a relationship between two connected vertices with respect to
an application dependent criterion. In many cases the weight function is chosen
as similarity function based on the characteristics of the modeled entities,
i.e., by the evaluation of vertex functions as defined below. Then the weight
function $w$ takes high values for important edges, i.e., high similarity of
the involved vertices, and low values for less important ones. In many
applications one normalizes the values of the weight function by $w \colon E
\rightarrow [0,1]$.

A natural extension of the weight function to the full set~$V \times V$ is
given by setting~$w(u,v) = 0$, if~$v \not\sim u$ or~$u = v$ for any~$u,v \in V$.
In this case the graph edge set can be characterized
as~$E = \{ (u,v) \in V \times V \colon w(u,v) > 0 \}$. 

In many cases it is preferable to use \emph{symmetric} weight
functions, i.e., \(w(u,v)=w(v,u)\). This also implicates
that~$v \sim u \Rightarrow u \sim v$ holds for all $u,v \in V$.
Hence, all directed graphs with symmetric weight function discussed in this
paper could be interpreted as~\emph{undirected} graphs, though in the following
it is important that each edge~\((u,v)\in E\) has a start node \(u\) which is
different from its end node~\(v\).
%
%
%
\subsection{Riemannian Manifolds}
\label{ss:prel_man}
We denote by~\(\mathcal M\) a \emph{complete, connected, \(m\)-dimensional
Riemannian manifold} and its Riemannian metric
by~\(\langle \cdot,\cdot\rangle_x \colon
  \T_x\mathcal M\times \T_x\mathcal M\to\mathbb R\),
where~\(\T_x\mathcal M\) is the tangent space at \(x \in \M\).
We furthermore denote by~\(\lVert\cdot\rVert_x\) the induced norm and
by~\(\T\mathcal M\) the disjoint union of all tangent spaces called
the \emph{tangent bundle}.
In the following we introduce the necessary notations and theory in order to
extend algorithms and methods for real-valued functions on finite weighted graphs to
the case of manifold-valued functions. Further details on this
subject can be found, e.g., in the textbooks on
manifolds~\cite{Jost11,dCar92,AMS08}.

The completeness of \(\M\) implies that any two points \(x,y\in\mathcal M\) can
be joined by a (not necessarily unique) shortest curve. We denote such a curve
by~\(\gamma_{\overset{\frown}{x,y}}\colon [0,L] \to \mathcal M\), where \(L\)
is the length of a shortest curve. We denote by~\(\dot\gamma(t) \coloneqq
\frac{d}{dt}\gamma(t)\in T_{\gamma(t)}\mathcal M\) the derivative vector field
of the curve. We further require, that the curve is parametrized with constant
speed, i.e.,~\(\lVert \dot\gamma_{\overset{\frown}{x,y}}(t)
\rVert_{\gamma_{\overset{\frown}{x,y}}(t)} = 1\), \(t\in[0,L]\). This
generalizes the idea of shortest paths from the Euclidean space~\(\mathcal
M=\mathbb R^m\), i.e., straight lines, to a manifold.
  
Let~\(D\colon \Gamma(\T\mathcal M)^2 \to \Gamma(\T\mathcal M)\) denote the
covariant derivative corresponding to the Levi-Citiva
connection~\cite[Theorem~3.6]{dCar92}, where \(\Gamma(\T\mathcal M)\) is the set
of all differentiable vector fields on~\(\mathcal M\).
A geodesic also fulfills that the covariant
derivative~\(D_{\dot\gamma(t)}\dot\gamma = \frac{D}{dt}\dot\gamma\)
of the (tangent) vector field~\(\dot\gamma(t)\in T_{\gamma(t)}\mathcal M\)
vanishes. However, (shortest) geodesics are not the only curves
possessing this property. For example let us fix on the two-dimensional
sphere~\(\mathcal M = \mathbb S^2 \coloneqq \{x\in\mathbb R^3
  : \lVert x\rVert = 1\}\) two points \(x,y\in\mathbb S^2\)
not being antipodal.
Then there exists a unique great circle containing the points \(x,y\).
One of the arcs is the shorter one and yields the
geodesic~\(\gamma_{\overset{\frown}{x,y}}\).
Still both great arcs (seen as curves on the manifold) have a vanishing
covariant derivative. In the literature, the non-shortest curves with vanishing
covariant derivative \(D\dot\gamma=0\) are often also called geodesics. In the
following we refer to geodesics as always being shortest ones. The shortest
geodesic might also not be unique, e.g.,~for two antipodal points~\(x,y\)
on~\(\mathbb S^2\) both connecting great half circles are geodesics and both are 
shortest ones.
In this case we will still write \emph{the geodesic} meaning that any of the shortest
geodesics is meant.

The length of the shortest geodesic induces
the \emph{geodesic distance}~\(d_{\mathcal M}
  \colon\mathcal M\times\mathcal M\to\mathbb R\).
From the Theorem of Hopf and Rinow, cf.~\cite[Theorem 1.7.1]{Jost11}, we obtain that for
some \(\varepsilon>0\) and \(\xi\in T_x\mathcal M\) there exists a unique
geodesic~\(\gamma_{x,\xi}\colon (-\varepsilon,\varepsilon)\to\mathcal M\)
fulfilling \(\gamma_{x,\xi}(0) = x\) and~\(\dot\gamma_{x,\xi}(0) = \xi\in
\T_x\mathcal M\). Furthermore, by the uniqueness the Hopf-Rinow Theorem states
that the uniqueness is given for any \(\varepsilon\) and hence
with~\(\varepsilon=1\) and~\(y \coloneqq \gamma_{x,\xi}(1)\) we see that the
geodesic~\(\gamma_{\overset{\frown}{x,y}}\) is just a reparametrization
of~\(\gamma_{x,\xi}\), namely \(\gamma_{x,\xi}(t) =
\gamma_{\overset{\frown}{x,y}}(\lVert \xi \rVert_x t)\).

\begin{definition}[Exponential and logarithmic maps]
\label{def:exp_log}
The \emph{exponential map}~\(\exp_x\colon \T_x\mathcal M\to\mathcal M\) is 
defined as~\(\exp_x(\xi) = \gamma_{x,\xi}(1)\).
Furthermore, let~\( r_x\in\mathbb R^+\) denote the injectivity radius,
i.e., the largest radius such that \(\exp_x\) is injective for all \(\xi\)
with~\(\lVert\xi\rVert_x < r_x\).
Furthermore, let \[
  \mathcal D_x \coloneqq
  \{ y\in\mathcal M :
    y = \exp_x\xi, \text{ for some }\xi\in T_x\mathcal M \text{ with } \lVert\xi\rVert_x<r_x\}\ .
\]
Then the inverse map~\(\log_x\colon \mathcal D_x\to \T_x\mathcal M\)
is called the~\emph{logarithmic map} and maps a
point~\(y=\gamma_{x,\xi}(1)\in\mathcal D_x\) to~\(\xi\). 
\end{definition}
By the properties of the exponential
map it holds that~\(d_{\mathcal M}(x,y) = d_{\mathcal M}(x,\gamma_{x,\log_xy}(1)) = \lVert\log_xy\rVert_x\).

For a differentiable vector field~\(V\in\Gamma(\T\mathcal M)\) and a smooth
curve~\(c(t)\) on the manifold we can further define the \emph{vector field
along~\(c\)} by~\(V(t) \coloneqq V(c(t))\).
The solution~\(W\in\Gamma(\T\mathcal M)\) of
\[
  D_{V(t)}W(t) = 0
\]
is called \emph{parallel transport} of~\(W(0) \in T_{c(0)}\mathcal M\) along
the curve~\(c\).
We introduce the special notation~\(\PT_{x\to y}\colon \T_x\mathcal M
  \to \T_y\mathcal M\) for the \emph{parallel transport} of a
vector~\(\nu\in \T_x\mathcal M\) to~\(y\) along a geodesic
connecting~\(x\) and~\(y\),
  i.e.,~\(\PT_{x\to y}(\nu)
  \coloneqq W(d_{\mathcal M}(x,y))\in T_y\mathcal M\) is the
parallel transport~\(W\) with~\(W(0)=\nu\)
and~\(c(t)=\gamma_{\overset{\frown}{x,y}}\) evaluated at the end
point~\(t=d_{\mathcal M}(x,y)\) of the geodesic, i.e., at~\(y\).
The above introduced definitions are collectively illustrated in
Figure~\ref{fig:manifold}.
\begin{figure}\centering
  \includegraphics{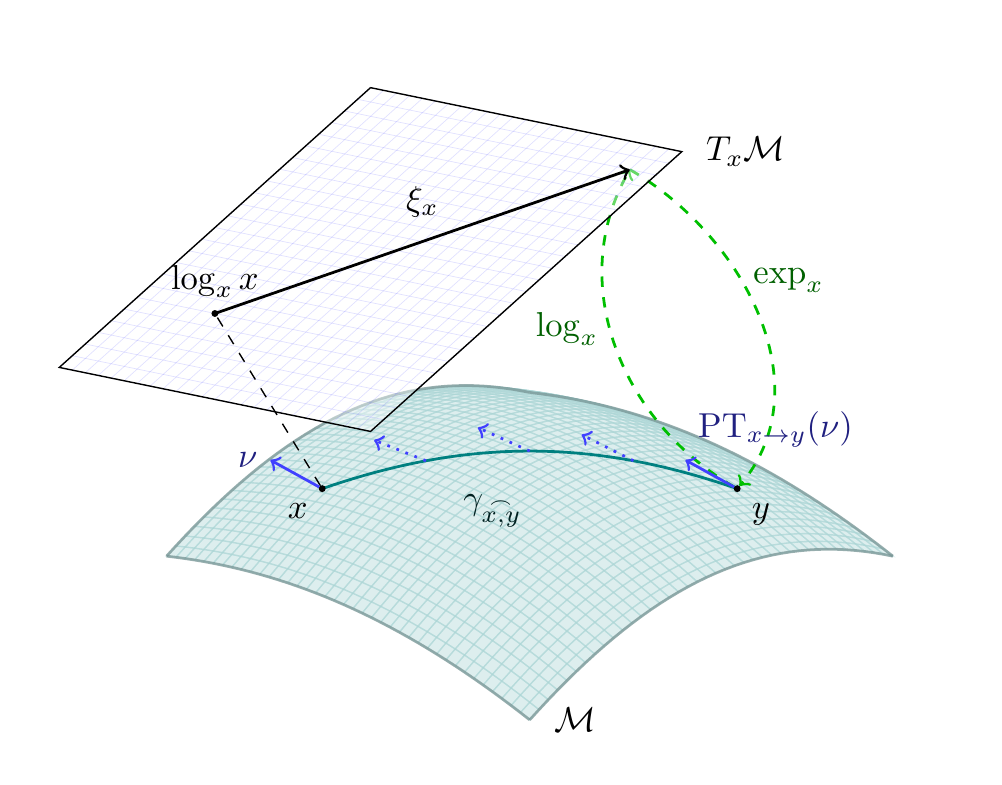}
  \caption{Illustration of the geodesic~\(\gamma_{\overset{\frown}{x,y}}\), the tangent plane \(\T_x\mathcal M\), the logarithmic, the exponential map and the parallel transport \(\PT_{x\to y}\) on a Riemannian
  manifold~\(\mathcal M\). Note that the tangent plane \(\T_x\mathcal M\) is in fact tangent to the manifold and only moved perpendicular to the surface (indicated by the dashed line) for presentation.}
  \label{fig:manifold}
\end{figure}
Whenever the manifold is not explicitly given, i.e.,~there is only an approximate
description available, the whole following framework can also be employed using
numerical approximations of the exponential and logarithmic maps, hence also using only an
approximation to the distance~\(d_{\mathcal M}\) and the parallel transport on
the manifold.

Finally, we have to generalize the notion of convexity to a Riemannian manifold. 
A set \(\mathcal C\subset\mathcal M\) is called \emph{convex} if for any two
points~\(x,y\in\mathcal M\) all minimizing geodesics~\(\gamma_{\overset{\frown}{x,y}}\) lie in \(\mathcal C\).
Such a set~\(\mathcal C\) is called \emph{weakly convex} if for all \(x,y\)
there exists a geodesic~\(\gamma_{\overset{\frown}{x,y}}\) lying completely
in~\(\mathcal C\).
Note that convexity might be a local phenomenon on certain manifolds: on the sphere \(\mathbb S^2\) there exists no convex set~\(\mathcal C\) larger than an
open half-sphere.
Finally, a function \(f\colon\mathcal M \to\mathbb R\) is called \emph{(weakly) convex} on a~(weakly) convex set~\(\mathcal C\) if for all \(x,y\in \mathcal C\)
the composition
\[
  f(\gamma_{\overset{\frown}{x,y}}(t)),\qquad t\in [0,d_{\mathcal M}(x,y)],
\]
is a convex function. This notion of convexity is sometimes called
\emph{geodesic} convexity.
\section{A graph framework for manifold-valued functions}
\label{s:our_framework}
In this section we propose a graph-based framework for processing manifold-valued data. This approach allows us to translate successful variational models and partial differential equations to manifolds. Furthermore, the graph framework allows us to unify local as well as nonlocal methods for manifold-valued data. This leads to a comprehensive discussion of related models and elegant numerical solutions.
We begin by introducing the concept of manifold-valued vertex functions and edge functions which take values in respective tangent spaces in Section \ref{ss:calculus_manifold}. In this setting we also discuss the characteristics of the corresponding function spaces. One fundamental definition will be the notion of the discrete directional derivative for manifold values, which we introduce in Section \ref{ss:difference_operators_manifold}. Based on this we introduce first order differential operators, i.e., the weighted local discrete gradient and the weighted local divergence operator. We use these definitions to derive higher order differential operators for manifold-valued vertex functions in Section \ref{ss:p-Laplacians_manifold}, namely a family of isotropic and anisotropic graph $p$-Laplacians. Finally, we discuss the special case in which the manifold $\mathcal M$ is simply a Euclidean space $\mathbb{R}^m$. In this setting we show that our proposed framework is a generalization of well-known graph operators from the literature.

\subsection{Discrete calculus for manifold-valued vertex functions
and tangent edge functions}
\label{ss:calculus_manifold}
In the following we introduce the basic definitions and observations for manifold-valued vertex functions and tangent edge functions on a finite weighted graph \(G = (V, E, w)\).
\begin{definition}[Manifold-valued vertex function]
\label{def:vertex_function}
Let $G = (V, E, w)$ be a finite weighted graph and let $\mathcal M$ be
a complete, connected,~$m$-dimensional Riemannian manifold. We define a \emph{manifold-valued
  vertex function}~$f$ as
\begin{equation*}
\begin{split}
  f \colon V \ &\rightarrow \ \mathcal M,\\
  u \ &\mapsto \ f(u).
\end{split}
\end{equation*}
For a given vertex function \(f\) we can define the \emph{finite, disjoint union
of the induced tangent spaces}~\(\T_f \mathcal M\) as
\begin{equation*}
\T_f \mathcal M
  \ \coloneqq \ \dot{\bigcup\limits_{u\in V}} \T_{f(u)} \mathcal M
  \, \subset \, \T\mathcal M.
\end{equation*}
\end{definition}
In the following we discuss properties of the function spaces of
manifold-valued vertex functions. We define a metric for vertex functions based
on mappings into respective tangent spaces employing the logarithmic map.

In the following we require that two adjacent data items \(f(u),f(v), (u,v)\in E\) posess the property that \(f(v)\in \mathcal D_{f(u)}\) and vice versa, such that the logarithmic map \(\log_{f(u)}f(v)\) is well defined.
\begin{definition}[Function spaces on vertex functions]
\label{def:function_spaces_manifold}
Let $G = (V, E, w)$ be a finite weighted graph.
We introduce a metric space of manifold-valued vertex functions denoted
\\
as~$(\mathcal H(V; \mathcal M), d_{\mathcal H(V; \mathcal M)}(\cdot,\cdot))$ on
a set of admissible functions
\begin{equation*}
\mathcal H(V;\mathcal M) \ \coloneqq \ \{f \colon V \rightarrow \mathcal M \bigl| f(u) \in \mathcal D_{f(v)}\text{ for all } (u,v)\in E\} \ ,
\end{equation*}
with the associated metric, which is for two vertex functions $f,g \in \mathcal
H(V;\mathcal M)$ given by
\begin{equation*}
 d_{\mathcal H(V;\mathcal M)}(f,g) \ \coloneqq \
  \biggl( \sum_{u\in V} d_{\mathcal M}^2(f(u),g(u)) \biggr)^{\frac{1}{2}}.
\end{equation*}
Here, $d_{\mathcal M}(\cdot, \cdot) \colon \mathcal M \times \mathcal M
\rightarrow \mathbb{R}_{\geq 0}$ denotes the geodesic distance between
two points on the manifold $\mathcal M$.
In fact this is the standard metric on the product manifold \(\mathcal M^n\).
\end{definition}
We will refer to functions \(f\in \mathcal H(V;\mathcal M)\) as data with neighboring items fulfilling the \emph{locality property}, i.e.~neighboring data items with respect to the set of edges are within their injectivity radii.
If this is not fulfilled for data given in practice, a denser sampling, i.e.~larger graph with smaller edge weights is required. 

Based on the notion of vertex functions mapping from the set of vertices~\(V\)
into the manifold \(\mathcal M\) we are able to define edge functions mapping
from the set of edges \(E\) into the respective tangent spaces. The edge
functions in the vector valued setting, see, e.g.~\cite{ETT15}, represent
finite differences. The analogue of a discrete difference as an approximation
of the derivative is given by the logarithmic map on a manifold. The
corresponding values are therefor given in the tangent bundle. Hence the
definition of edge functions is only meaningful with respect to an associated
vertex function as this induces the corresponding tangent spaces.
\begin{definition}[Tangential edge function]
\label{def:manifold_edge_function}
Let~$G = (V, E, w)$ be a finite weighted graph. We define a \emph{tangent
  edge function}~$H_f$ with respect to a manifold-valued vertex
function~$f\in\mathcal H(V;\mathcal M)$ as
\begin{equation*}
\begin{split}
  H_f \colon E \ &\rightarrow \ \T_f \mathcal M,\\
  (u,v) \ &\mapsto \ H_f(u,v) \in \T_{f(u)}\mathcal M.
\end{split}
\end{equation*}
\end{definition}
Note that these tangent edge functions require directed graphs due to the
tangent space \(\T_{f(u)}\mathcal M\) of the function value \(f(u)\) at the
start point \(u\in V\) of the edge involved. For undirected graphs, we consider
their directed analogon with the symmetry property \((u,v)\in E \Leftrightarrow
(v,u)\in E\) and can still investigate their tangent edge functions thereon.

We can introduce a family of measures on the values of an edge function~$H$
at an edge~$(u,v) \in E$ by using the respective vector norms which can be
associated to the tangent space $\T_{f(u)}\mathcal M$.

Let~\(\mathcal H(E; \T_f \mathcal M)\) denote the space of functions for
which~\(H(u,v) \in T_{f(u)}\mathcal M\) holds true for any edge \((u,v)\in E\).
Then we define a family of norms, namely for \(p,q\geq 1\), by
  \begin{align}\label{eq:HETfMnorm}
    \lVert H_f\rVert_{\ell_{p,q}(E; \T_f \mathcal M)} \ \coloneqq \
	\Biggl(
    \frac{2}{p}
    \sum_{u\in V}
      \biggl(
        \sum_{v\sim u}
          \lVert H_f(u,v) \rVert_{f(u)}^q
       \biggr)^{\frac{p}{q}}
    \biggr)^{\frac{1}{p}} \ .    
  \end{align}
  We further introduce the short hand notation~\(\lVert\cdot\rVert_{p,q}
  \coloneqq \lVert \cdot\rVert_{\ell_{p,q}(E; \T_f \mathcal M)}\)
  if~\(\mathcal M\), \(E\), and \(f\) are clear from the context.
  Note that this includes the special cases of the~\emph{isotropic} norm
  \(\lVert\cdot\rVert_{p,2}\) and the \emph{anisotropic} norm~\(\lVert\cdot\rVert_{p,p}\).
  For the special case $p=q=2$ we obtain a
  Hilbert space, which is equipped with the inner product
\begin{equation*}
\langle H_f,G_f\rangle_{\mathcal H(E; \T_f \mathcal M)}\ \coloneqq \ 
\sum_{(u,v)\in E} \langle H_f(u,v), G_f(u,v) \rangle_{f(u)} \ .
\end{equation*}
In order to discuss edge functions independently of an associated vertex function $f \in \mathcal H(V; \M)$ we define a general Hilbert space as union of all Hilbert spaces of admissible edge functions by
\begin{equation*}
\mathcal H(E) \coloneqq \bigcup_{f\in \mathcal H(V;\mathcal M)} \mathcal H(E; \T_f \mathcal M) \ .
\end{equation*}
Then one can interpret an edge function $H \in \mathcal H(E)$ as a function~$H \colon E \rightarrow \T \mathcal M$ with the restriction, that all~\(H(u,v)\), \(v \sim u\), are values in the same tangent space.

\begin{definition}[Local variation of an edge function]\label{def:locVarMeas}
Let $G = (V, E, w)$ be a finite weighted graph. We define the~\emph{local variation} of a tangent edge function \(H_f\in \mathcal H(E;\T_f\mathcal M)\)
at a vertex \(u\in V\) as
\begin{align}\label{eq:LocVarMeas}
	\lVert H_f \rVert^q_{q,f(u)}
	\ &\coloneqq \
	\sum_{v\sim u}
	\lVert H_f(u,v)\rVert_{f(u)}^q
  ,\quad q \geq 1.
\end{align}
\end{definition}
Note that this norm represents the summand for a fixed \(u\in V\) within the \(\ell_{p,q}\)-norm~~\eqref{eq:HETfMnorm}.

If we further employ the parallel transport,
i.e.,~for~\(F,G\in\mathcal H(E)\) with~\(F\in\mathcal H_f(E)\),
\(G\in\mathcal H_g(E)\) we have for~\(\tilde F(u,v) \coloneqq
  \PT_{f(u)\to g(u)}F(u,v)\)
that~\(\tilde F\in \mathcal H_g(E)\). This way, we may also
obtain a Hilbert space.

We define a symmetric mapping on the space of vertex
functions~\(\mathcal H(V;\mathcal M)\) by
\begin{align*}
	\langle f,g \rangle
	\coloneqq
	\sum_{u\in V}
	\biggl(
		\sum_{v\sim u}
			\bigl\langle \log_{f(u)}f(v), \PT_{g(u)\to f(u)}\log_{g(u)}g(v)\bigr\rangle_{f(u)}
	\biggr).
\end{align*}
The symmetry can be seen by noting that both sum run over the same set of nodes for \(\langle g,f\rangle\) and a single summand reads \(\langle \log_{g(u)}g(v), \PT_{f(u)\to g(u)}\log_{f(u)}f(v)\rangle_{g(u)}\).
This can be be parallel transported to \(\T_{f(u)}\mathcal M\) which does not
change its value. The symmetry follows from the symmetry of the inner product in \(\T_{f(u)}\mathcal M\).

We further equip the space \(\mathcal H(V;\mathcal M)\) with a norm, which is for \(p=2\) induced by the symmetric map
\[
	\lVert f\rVert_p^p
	\coloneqq
		\sum_{u\in V}
		\biggl(
			\sum_{v\sim u}
				\bigl\langle \log_{f(u)}f(v), \log_{f(u)}f(v)\bigr\rangle_{f(u)}
		\biggr)^{\frac{p}{2}},\qquad p\geq 1.
\]
Note that this distance can also be employed for arbitrary functions \(f\), i.e., also for data not fulfilling the locality property by employing the Riemannian distance \(d_{\mathcal M}(f(u),f(v))^p\) instead of the inner products of the logarithmic maps. Hence indeed the sum of all distances \(d_{\mathcal M}(f(u),f(v))\) between all nodes for \(p=2\), and thus induces a proper norm on the set of vertex functions.
However, due to the missing linearity, this norm is not induced by an inner product. We still employ the symbol \(\langle\cdot,\cdot\rangle\) for this symmetric map for the sake of simplicity.

Finally, we can also look at tangent vertex functions \(H\in \mathcal H(V;T_f\mathcal M)\) associated with \(f\in\mathcal H(V;\mathcal M)\), i.e.,
\[
  \mathcal H(V;T_f\mathcal M)
  \coloneqq
  \{
    H\colon V\to T_f\mathcal M,\ H(u)\in T_{f(u)}\mathcal M
  \},
\]
which can be equipped with the metric and its induced norm from the tensor
product~\(\T_f\mathcal M\) of the tangent spaces~\(T_{f(u)}\mathcal M\), \(u\in V\).
Similarly we use \(\mathcal H(V;\T\mathcal M)\).

\subsection{First-order difference graph operators}
\label{ss:difference_operators_manifold}
After defining all necessary functions, function spaces, and measures
in the last section,
we can now proceed to introduce novel first-order graph operators for
manifold-valued data.

\begin{definition}[Weighted directional derivative]
\label{def:weighted_difference_manifold}
Let $G = (V, E, w)$ be a finite weighted graph.
We define the \emph{weighted directional
derivative}~$\partial_v f \colon V \rightarrow \T_{f(u)} \mathcal M$
of a manifold-valued vertex function~$f \in \mathcal H(V;\mathcal M)$
at~$u\in V$ in direction of another vertex $v\in V$ as
\begin{equation*}
\partial_vf(u) \coloneqq \sqrt{w(u,v)}\log_{f(u)}f(v) \ .
\end{equation*}
Note that we set \(\partial_vf(u)=0\), whenever \(v\not\sim u\).
From this definition we can directly deduce the following lemma.
\end{definition}
\begin{lemma}[Properties of the weighted directional derivative]
The weighted directional
\\
derivative is
\begin{enumerate}[label=\normalfont\roman*)]
\item reflexive, i.e., \( \partial_vf(u)=0\) for $u=v$
\item anti-symmetric under parallel transport,
i.e., it holds
\begin{align*}
  \partial_vf(u) &= \sqrt{w(u,v)}\log_{f(u)}f(v)
    = -\PT_{f(v)\to f(u)}\sqrt{w(u,v)}\log_{f(v)}f(u)\\
    &= -\PT_{f(v)\to f(u)}(\partial_uf(v)).
\end{align*}
\end{enumerate}
\end{lemma}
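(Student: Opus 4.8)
The statement to prove has two parts, and both follow almost immediately from the definition of the weighted directional derivative together with basic facts about the logarithmic map and parallel transport stated in Section~\ref{ss:prel_man}.

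For part~(i), reflexivity, the plan is simply to observe that when $u = v$ we have $f(u) = f(v)$, so $\log_{f(u)} f(v) = \log_{f(u)} f(u) = 0 \in \T_{f(u)}\mathcal M$, since the exponential map satisfies $\exp_{f(u)}(0) = \gamma_{f(u),0}(1) = f(u)$ and $\log$ is its local inverse. Multiplying the zero vector by $\sqrt{w(u,v)}$ still gives zero. (One could also invoke the convention $\partial_v f(u) = 0$ for $v \not\sim u$ together with the standing assumption that the graph has no loops, i.e.\ $w(u,u) = 0$; I would mention both readings since the definition already builds in the loop-free convention.)

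For part~(ii), anti-symmetry under parallel transport, the key geometric fact is that the geodesic $\gamma_{\overset{\frown}{f(u),f(v)}}$ from $f(u)$ to $f(v)$, traversed in reverse, is the geodesic $\gamma_{\overset{\frown}{f(v),f(u)}}$ from $f(v)$ to $f(u)$; consequently $\log_{f(u)} f(v) = -\PT_{f(v)\to f(u)}\bigl(\log_{f(v)} f(u)\bigr)$. This holds because $\log_{f(v)} f(u)$ is the initial velocity at $f(v)$ of the geodesic reaching $f(u)$ at time~$1$; parallel transporting this velocity field along the geodesic to $f(u)$ and negating it yields precisely the initial velocity at $f(u)$ of the reversed geodesic, which is $\log_{f(u)} f(v)$ — here one uses that parallel transport preserves the velocity field of a geodesic and is a linear isometry between tangent spaces, so it commutes with scalar multiplication, in particular with multiplication by $-1$ and by $\sqrt{w(u,v)}$. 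Then I would simply chain the equalities: $\partial_v f(u) = \sqrt{w(u,v)}\log_{f(u)} f(v) = -\sqrt{w(u,v)}\,\PT_{f(v)\to f(u)}\log_{f(v)} f(u) = -\PT_{f(v)\to f(u)}\bigl(\sqrt{w(u,v)}\log_{f(v)} f(u)\bigr) = -\PT_{f(v)\to f(u)}\bigl(\partial_u f(v)\bigr)$, where the last step uses the symmetry $w(u,v) = w(v,u)$ of the weight function (assumed throughout the paper for the symmetric-weight case), so that the coefficient $\sqrt{w(u,v)}$ appearing in $\partial_u f(v)$ agrees with the one being factored out.

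The only genuine subtlety — and it is not really an obstacle once the hypotheses are in place — is well-definedness: the logarithmic maps $\log_{f(u)} f(v)$ and $\log_{f(v)} f(u)$ must exist, i.e.\ $f(v) \in \mathcal D_{f(u)}$ and $f(u) \in \mathcal D_{f(v)}$, and the connecting geodesic must be (essentially) unique so that "$\PT_{f(v)\to f(u)}$ along the geodesic" is unambiguous. But this is exactly the locality property built into the definition of $\mathcal H(V;\mathcal M)$, so for $f \in \mathcal H(V;\mathcal M)$ and $(u,v)\in E$ everything is legitimate; I would state this explicitly at the start of the proof and then the two parts are one-line computations each.
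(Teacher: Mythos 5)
Your proof is correct and takes essentially the same route as the paper, which offers no separate proof at all — the lemma is ``directly deduced'' from the definition via exactly the geodesic-reversal identity $\log_{f(u)}f(v) = -\PT_{f(v)\to f(u)}\log_{f(v)}f(u)$ that you invoke, together with linearity of parallel transport. Your observation that the final equality $-\PT_{f(v)\to f(u)}\bigl(\sqrt{w(u,v)}\log_{f(v)}f(u)\bigr) = -\PT_{f(v)\to f(u)}\bigl(\partial_u f(v)\bigr)$ tacitly requires $w(u,v)=w(v,u)$ is a fair catch that the paper leaves unstated.
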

We also define another variant of the weighted directional derivative 
as an edge function.
\begin{definition}[Weighted local gradient]
\label{def:weighted_gradient_manifold}
Let $G = (V, E, w)$ be a finite weighted graph.
We define the \emph{weighted local gradient}~$\nabla \colon
  \mathcal H(V;\mathcal M) \to \mathcal H(E)$ of
a manifold-valued vertex function~$f\in \mathcal H(V;\mathcal M)$ as
\begin{equation*}
\nabla f(u,v) \ \coloneqq \ \partial_v f(u) \ = \ \sqrt{w(u,v)}\log_{f(u)}f(v) \ ,\quad (u,v)\in E.
\end{equation*}
\end{definition}
Clearly, we see that $\nabla f \in \mathcal H(E;\T_f \mathcal M)$. Furthermore, the
following Theorem derives a relationship between the weighted local gradient \(\nabla f\) and a corresponding edge function \(H_f\).

\begin{theorem}\label{th:relationship_divergence}
Let $G = (V, E, w)$ be a finite weighted graph with a symmetric edge set \(E\),
i.e.~\((u,v)\in E\Leftrightarrow(v,u)\in E\) and an arbitrary, not necessarily
symmetric weight function \(w\). Let further $f \in \mathcal H(V;\mathcal M)$
be a vertex function and $H_f \in \mathcal H(E; \T_f \mathcal M)$ a tangent
edge function. Then we have the following relationship
\begin{equation*}
\begin{split}
\langle \nabla f, H_f &\rangle_{\mathcal H(E; \T_f \mathcal M)} \ = \\
&\sum_{u\in V} \sum_{v\sim u} \langle \log_{f(u)}f(v), \frac{\sqrt{w(u,v)}}{2} H_f(u,v) - \frac{\sqrt{w(v,u)}}{2} \PT_{f(v)\rightarrow f(u)}(H_f(v,u)) \rangle_{f(u)}.
\end{split}
\end{equation*}
\end{theorem}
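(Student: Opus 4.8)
The plan is to unfold both sides into sums over the (symmetric) edge set $E$ and then symmetrise the left‑hand side by averaging each summand with its mirror image under the edge‑flip $(u,v)\mapsto(v,u)$, exploiting that parallel transport along a geodesic is a linear isometry and that the weighted directional derivative is anti‑symmetric under parallel transport. First I would insert Definition~\ref{def:weighted_gradient_manifold} together with the definition of the inner product on $\mathcal H(E;\T_f\mathcal M)$ to write
\[
\langle \nabla f, H_f\rangle_{\mathcal H(E;\T_f\mathcal M)}
 = \sum_{(u,v)\in E}\sqrt{w(u,v)}\,\bigl\langle \log_{f(u)}f(v),\,H_f(u,v)\bigr\rangle_{f(u)}.
\]
Since $E$ is symmetric, $(u,v)\mapsto(v,u)$ is a bijection of $E$ onto itself, so the right‑hand side equals $\tfrac12$ of itself plus $\tfrac12$ of the same sum with the summation indices $u$ and $v$ interchanged, i.e.
\[
\langle \nabla f, H_f\rangle_{\mathcal H(E;\T_f\mathcal M)}
 = \frac12\sum_{(u,v)\in E}\sqrt{w(u,v)}\,\bigl\langle \log_{f(u)}f(v),\,H_f(u,v)\bigr\rangle_{f(u)}
  +\frac12\sum_{(u,v)\in E}\sqrt{w(v,u)}\,\bigl\langle \log_{f(v)}f(u),\,H_f(v,u)\bigr\rangle_{f(v)}.
\]

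The one genuinely geometric step is to transport the inner products in the second sum from $\T_{f(v)}\mathcal M$ to $\T_{f(u)}\mathcal M$. Parallel transport $\PT_{f(v)\to f(u)}$ is a linear isometry between these tangent spaces (a standard property of parallel transport with respect to the Levi‑Civita connection), and by the anti‑symmetry recorded in the preceding lemma on the properties of the weighted directional derivative — in its unweighted form $\PT_{f(v)\to f(u)}\log_{f(v)}f(u)=-\log_{f(u)}f(v)$, obtained by dividing that identity by $\sqrt{w(u,v)}>0$ — one gets
\[
\bigl\langle \log_{f(v)}f(u),\,H_f(v,u)\bigr\rangle_{f(v)}
 =\bigl\langle \PT_{f(v)\to f(u)}\log_{f(v)}f(u),\ \PT_{f(v)\to f(u)}H_f(v,u)\bigr\rangle_{f(u)}
 =-\bigl\langle \log_{f(u)}f(v),\ \PT_{f(v)\to f(u)}H_f(v,u)\bigr\rangle_{f(u)}.
\]

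Substituting this into the second sum makes $\log_{f(u)}f(v)$ the common left argument of every inner product, all now taken in $\T_{f(u)}\mathcal M$; collecting the two sums by bilinearity of $\langle\cdot,\cdot\rangle_{f(u)}$ and rewriting $\sum_{(u,v)\in E}$ as $\sum_{u\in V}\sum_{v\sim u}$ then yields precisely the claimed identity. The step to watch is the use of the anti‑symmetry relation: one must invoke the purely metric identity $\log_{f(u)}f(v)=-\PT_{f(v)\to f(u)}\log_{f(v)}f(u)$ rather than the version $\partial_v f(u)=-\PT_{f(v)\to f(u)}\partial_u f(v)$, because $\partial_u f(v)$ carries the weight $\sqrt{w(v,u)}$, and with $w$ possibly non‑symmetric this weight must be kept separate from the $\sqrt{w(u,v)}$ appearing in the first sum — which is exactly why the final formula has the two distinct coefficients $\tfrac{\sqrt{w(u,v)}}{2}$ and $\tfrac{\sqrt{w(v,u)}}{2}$. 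Everything else is routine bookkeeping with finite sums, requiring no completeness or locality hypotheses beyond those already built into $\mathcal H(V;\mathcal M)$ and $\mathcal H(E;\T_f\mathcal M)$.
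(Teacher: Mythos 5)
Your proposal is correct and follows essentially the same route as the paper's proof: expand the inner product over the symmetric edge set, split it into two halves, re-index the second half via the edge flip $(u,v)\mapsto(v,u)$, and use that $\PT_{f(v)\to f(u)}$ is a linear isometry together with $\log_{f(u)}f(v)=-\PT_{f(v)\to f(u)}\log_{f(v)}f(u)$ before recombining by bilinearity. Your remark about keeping the unweighted anti-symmetry identity so that $\sqrt{w(u,v)}$ and $\sqrt{w(v,u)}$ stay separate is exactly the point the paper's computation implicitly relies on.
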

\begin{proof}
  Starting with the left hand side we compute
\begin{align*}
&\left\langle \nabla f, H_f \right\rangle_{\mathcal H(E; \T_f \mathcal M)} \ = \ \sum_{(u,v)\in E} \left\langle \nabla f(u,v), H_f(u,v) \right\rangle_{f(u)} \\
 &= \ \frac{1}{2}\sum_{(u,v)\in E} \bigl\langle \sqrt{w(u,v)} \log_{f(u)} f(v), H_f(u,v)\bigr\rangle_{f(u)}\\
 &\hspace{2cm} + \frac{1}{2}\sum_{(u,v)\in E} \bigl\langle \sqrt{w(u,v)} \log_{f(u)} f(v), H_f(u,v)\bigr\rangle_{f(u)} \\
 &= \ \sum_{(u,v)\in E} \Bigl\langle \log_{f(u)} f(v), \frac{\sqrt{w(u,v)}}{2}H_f(u,v)\Bigr\rangle_{f(u)} \\ 
 & \hspace{2cm} + \sum_{(v,u)\in E} \Bigl\langle \PT_{f(v)\rightarrow f(u)}(\log_{f(v)} f(u)), \PT_{f(v)\rightarrow f(u)}\left(\frac{\sqrt{w(v,u)}}{2} H_f(v,u)\right)\Bigr\rangle_{f(u)} \\
 &= \ \sum_{(u,v)\in E} \Bigl\langle \log_{f(u)} f(v), \frac{\sqrt{w(u,v)}}{2}H_f(u,v)\Bigr\rangle_{f(u)} \\ 
 & \hspace{2cm} + \sum_{(v,u)\in E} \Bigl\langle -\log_{f(u)} f(v)), \frac{\sqrt{w(v,u)}}{2} \PT_{f(v)\rightarrow f(u)} \left( H_f(v,u) \right)\Bigr\rangle_{f(u)} \\
  &=
  \ \sum_{u\in V} \sum_{v\sim u} \langle \log_{f(u)}f(v), \frac{\sqrt{w(u,v)}}{2} H_f(u,v) - \frac{\sqrt{w(v,u)}}{2} \PT_{f(v)\rightarrow f(u)}(H_f(v,u)) \rangle_{f(u)}\ .
\end{align*}
\end{proof}

We use the relationship deduced in Theorem~\ref{th:relationship_divergence} to motivate the definition of a local divergence operator for the respective tangent spaces. Due to the fact that the discussed manifold-valued vertex functions are not mapping into a vector space, we are not able to deduce the divergence as an adjoint operator of the gradient operator as it is the case, e.g., for Euclidean spaces.

\begin{definition}[Weighted local divergence]
\label{def:divergence_manifold}
We define a \emph{weighted local divergence operator} \(\div \colon \mathcal H(E)\to \mathcal H(V;T\mathcal M)\) for a tangent edge function \(H_f \in \mathcal H(E; \T_f \mathcal M)\) associated to~\(f\in \mathcal H(V;\mathcal M)\) at a vertex \(u \in V\) as
\[
  \div H_f(u) \ \coloneqq \
    \frac{1}{2}\,
    \sum_{v\sim u}
      \sqrt{w(v,u)}
      \PT_{f(v)\to f(u)}H_f(v,u) - \sqrt{w(u,v)}H_f(u,v)\ .
\]
\end{definition}

\begin{remark}
Let~$G = (V,E,w)$ be a finite weighted graph with symmetric weight function,
i.e.,~$w(u,v) = w(v,u)$ for all $(u,v) \in E$.
Assuming an edge function~$H_f \in \mathcal H(E; \T_f \mathcal M)$ which is
anti-symmetric under parallel transport,
i.e.,~\(H_f(u,v) = -\PT_{f(v)-f(u)}H_f(v,u)\) for all \((u,v) \in E\),
we obtain a concise representation of the weighted local divergence operator as
\begin{equation*}
  \div H_f(u) \ = \ - \sum_{v\sim u} \sqrt{w(u,v)} H_f(u,v) \ .
\end{equation*}
\end{remark}

\subsection{A family of graph \(p\)-Laplace operators for manifold-valued functions}
\label{ss:p-Laplacians_manifold}
Based on the definitions of the weighted local gradient and the weighted local
divergence in Section~\ref{ss:difference_operators_manifold}, we are able to
introduce a family of graph $p$-Laplace operators for manifold-valued functions.
Note that for~\(p,q<1\) in~\eqref{eq:HETfMnorm} and \(p<1\) in~\eqref{eq:LocVarMeas} we have only quasi-norms; however, we still can
define the following operators for this case. For the sake of simplicity we will discuss all operators in the case of finite weighted graphs with symmetric weight function in the following. However, these operators can easily be derived for arbitrary weight functions using the tools introduced in Section \ref{ss:difference_operators_manifold} above.
\begin{definition}[Graph $p$-Laplacian operators for manifold-valued functions]
Let $G = (V,E,w)$ be a finite weighted graph with symmetric weight function $w$ and let $ 0 < p < \infty$. For a manifold-valued vertex function \(f\in \mathcal H(V; \M)\) we define the \emph{anisotropic graph \(p\)-Laplacian} $\Delta^\mathrm{a}_p \colon \mathcal H(V;\mathcal M) \rightarrow \mathcal H(V;T\mathcal M)$ at a vertex $u \in V$ as
\begin{align*}
	\Delta^\mathrm{a}_{p}f(u)
	\  \coloneqq& \
    \div \bigl(
      \lVert \nabla f\rVert_{f(\cdot)}^{p-2}\, \nabla f
    \bigr)(u)
  \\
  \ =& \
  - \sum_{v\sim u}
	\sqrt{w(u,v)}^p d^{\,p-2}_{\mathcal M}(f(u),f(v))\log_{f(u)}f(v)
\end{align*}
and the \emph{isotropic graph \(p\)-Laplacian} $\Delta^\mathrm{i}_p \colon \mathcal H(V;\mathcal M) \rightarrow \mathcal H(V;T\mathcal M)$ at a vertex $u \in V$ as
\begin{align*}
	\Delta^\mathrm{i}_{p}f(u)
  \ \coloneqq& \
	\div\bigl( 
    \lVert \nabla f \rVert^{p-2}_{2,f(\cdot)} \nabla f
  \bigr)(u)
  \\
  =& \
    - \,
  \Bigl(
    \sum_{v\sim u}  w(u,v) \, d^2_{\mathcal M}(f(u),f(v))
  \Bigr)^{\frac{p-2}{2}}
  \sum_{v\sim u}
  w(u,v)
  \log_{f(u)}f(v) \ .
\end{align*}
For the special case of \(p=2\) we obtain in both above definitions an operator $\Delta \colon \mathcal H(V;\mathcal M) \rightarrow H(V;T_f\mathcal M)$, which we denote as graph Laplacian for manifold-valued functions, by
\begin{equation*}
	\Delta f(u) \ \coloneqq \ \Delta^\mathrm{i}_2 f(u) \ = \ \div(\nabla f) (u) \ = \ - \sum_{v\sim u} w(u,v)\log_{f(u)}f(v) \ .
\end{equation*}
\end{definition}

\subsection{Special case of $\mathcal M = \mathbb{R}^m$}
\label{ss:flat_manifold}
In the following we discuss the special case in which the Riemannian manifold $\mathcal M$ is simply the Euclidean space $\mathbb{R}^m, m\in\mathbb{N}$. This setting is ubiquitous in many applications such as image and point cloud processing. As $\mathcal M$ has curvature zero everywhere we can demonstrate that the introduced calculus from Subsection \ref{ss:p-Laplacians_manifold} leads to equivalent definitions of well-known graph operators from the literature, e.g., as used in \cite{ELB08,ETT15,GO08}. Thus, the here proposed framework can be interpreted as a generalization of the common calculus for vector-valued vertex functions $f \colon V \rightarrow \mathbb{R}^m$.

For $\M = \mathbb{R}^m$ it gets clear that the respective tangent spaces are equal, i.e., $T_{f(u)}\mathcal M = \mathbb{R}^m$ for all $u \in V$, and hence all mathematical operations are defined globally in this case. In particular, the maps $\log \colon \mathcal M \rightarrow \T\mathcal M$ and $\exp \colon \T\mathcal M \rightarrow \mathcal M$ introduced in Section \ref{ss:prel_man} become globally defined linear operators which correspond to vector subtraction and addition in $\mathbb{R}^m$. Furthermore, the parallel transport reduces to the identity operator because the tangent spaces coincide with the tangent space at the origin, which is itself isomorphic to $\mathbb{R}^m$. These observations lead to a simplification of all introduced operators above, which are be summarized in the following Corollary.
\begin{corollary}
Let $\mathcal M =\mathbb{R}^m, f \in \mathcal H(V; \mathcal M)$ a vertex function, and $\xi \in \T_{f(u)}\mathcal M = \mathbb{R}^m$ a (tangent) vector. For vertices $u,v \in V$ we obtain\\
\begin{enumerate}[label=\normalfont\roman*)]
\item Basic mathematical operations
\begin{equation*}
\label{eq:log_exp_resolved}
\begin{split}
\log_{f(u)}f(v) \ &= \ f(v) - f(u) \in \mathbb{R}^m,\\
\exp_{f(u)}\xi \ &= \ f(u) + \, \xi \in \mathbb{R}^m.\\
\end{split}
\end{equation*}
\item First-order differential operators
\begin{equation*}
\begin{split}
\partial_v f(u) \ = \ \nabla f(u,v) \ = \ \sqrt{w(u,v)}(f(v) - f(u)),\\
\div H(u) \ = \ \frac{1}{2} \sum_{v\sim u} \sqrt{w(u,v)} (H(v,u) - H(u,v)), 
\end{split}
\end{equation*}
\item Graph \(p\)-Laplacian operators
\begin{align*}
\Delta_{p}^\mathrm{a} f (u) \ &= \ - \sum_{v\sim u}
  (w(u,v))^{\frac{p}{2}} \lVert f(v)-f(u)\rVert^{p-2}(f(v)-f(u)), \\
  \Delta_{p}^\mathrm{i} f(u) \ &= \
  -   \Bigl(
    \sum_{v\sim u} w(u,v)
      \lVert f(v)-f(u)\rVert^2
  \Bigr)^\frac{p-2}{2}
  \sum_{v\sim u} ~
 w(u,v) (f(v)-f(u)).
\end{align*}
\end{enumerate}
\end{corollary}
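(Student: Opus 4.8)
The plan is to verify each of the three items by direct substitution, after first establishing the two geometric facts that make $\mathbb R^m$ special: the explicit form of geodesics, and the triviality of parallel transport. For (i), I would recall that with the Euclidean metric the Christoffel symbols vanish identically, so the geodesic equation $\tfrac{D}{dt}\dot\gamma = 0$ reduces to $\ddot\gamma = 0$; hence the geodesic with $\gamma_{x,\xi}(0)=x$, $\dot\gamma_{x,\xi}(0)=\xi$ is the straight line $\gamma_{x,\xi}(t) = x + t\xi$. By Definition~\ref{def:exp_log} this gives $\exp_x\xi = x+\xi$, and since $\exp_x$ is a global bijection (the injectivity radius is $+\infty$, so in particular $\mathcal H(V;\mathbb R^m)$ is all of $(\mathbb R^m)^n$) its inverse is $\log_x y = y - x$. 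Identifying each $\T_{f(u)}\mathbb R^m$ with $\mathbb R^m$ in the canonical way, this proves the first pair of identities; it also records $d_{\mathcal M}(x,y) = \lVert\log_x y\rVert_x = \lVert y-x\rVert$, which will be needed below.

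Next I would observe that, again because the Levi-Civita connection of the flat metric is the trivial connection, the parallel-transport equation $D_{\dot c(t)}W(t) = 0$ reads $\dot W(t) = 0$, so $W$ is constant along any curve $c$; under the identification above this means $\PT_{x\to y} = \mathrm{id}_{\mathbb R^m}$ for all $x,y$. This is the one place where a genuine (if short) differential-geometric argument is required rather than pure substitution, and it is the step I would be most careful to state cleanly, since the informal phrasing in the text ("the tangent spaces coincide with the tangent space at the origin") hides exactly this computation.

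With these two facts in hand, the remaining items are mechanical. For (ii): substituting $\log_{f(u)}f(v) = f(v)-f(u)$ into Definition~\ref{def:weighted_difference_manifold} and Definition~\ref{def:weighted_gradient_manifold} gives $\partial_v f(u) = \nabla f(u,v) = \sqrt{w(u,v)}\,(f(v)-f(u))$; substituting $\PT_{f(v)\to f(u)} = \mathrm{id}$ into Definition~\ref{def:divergence_manifold} collapses it to $\div H(u) = \tfrac12\sum_{v\sim u}\sqrt{w(v,u)}H(v,u) - \sqrt{w(u,v)}H(u,v)$, and using the assumed symmetry $w(u,v)=w(v,u)$ yields the stated formula. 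For (iii): I would start from the two closed forms in the definition of the graph $p$-Laplacians, replace $d_{\mathcal M}(f(u),f(v))$ by $\lVert f(v)-f(u)\rVert$ and $\log_{f(u)}f(v)$ by $f(v)-f(u)$, and note $\sqrt{w(u,v)}^{\,p} = w(u,v)^{p/2}$, which gives precisely $\Delta^{\mathrm a}_p f(u)$ and $\Delta^{\mathrm i}_p f(u)$ as claimed (with the $p=2$ case reducing to the standard graph Laplacian as a sanity check). The main "obstacle" is therefore not any single hard step but simply ensuring the canonical identification $\T_x\mathbb R^m\cong\mathbb R^m$ is applied consistently so that the parallel transport genuinely disappears; once that is fixed, everything else is bookkeeping.
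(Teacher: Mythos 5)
Your proposal is correct and follows essentially the same route as the paper, which justifies the corollary only by the preceding observations that in $\mathbb R^m$ all tangent spaces coincide, $\exp$ and $\log$ become vector addition and subtraction, and parallel transport is the identity, after which everything is direct substitution into the definitions. Your explicit derivation of these facts from the vanishing Christoffel symbols, and your remark that the stated divergence formula implicitly uses the symmetry $w(u,v)=w(v,u)$, only make the argument more careful than the paper's.
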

Note that for $\M = \mathbb{R}^m$ the weighted local divergence operator becomes a linear operator and thus the relationship in Theorem \ref{th:relationship_divergence} can be further elaborated leading to the well-known fact that the negative divergence operator is the adjoint graph operator of the weighted gradient, see e.g., \cite{ELB08}. 
\begin{corollary}
Let $G = (V, E, w)$ be a finite weighted graph with symmetric weight function $w$ and let \(\mathcal M = \mathbb{R}^m\). Then for any vertex function $f \in \mathcal H(V)$ and any edge function \(H \in \mathcal H(E)\) the following relationship holds:
\begin{equation}
\langle \nabla f, H \rangle_{\mathcal H(E; \mathbb{R}^m)} \ = \ \langle f, -\div H \rangle_{\mathcal H(V; \mathbb{R}^m)} \ ,
\end{equation}
for which the inner products above are the standard inner products of \(\mathbb{R}^{m\times n\times n}\) and \(\mathbb{R}^{m\times n}\), respectively.
\end{corollary}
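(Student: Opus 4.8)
The plan is to specialize the general relationship of Theorem~\ref{th:relationship_divergence} to the case $\mathcal M = \mathbb R^m$ and simplify using the identities collected in the preceding Corollary. Since $\mathcal M$ is flat, parallel transport $\PT_{f(v)\to f(u)}$ is the identity and $\log_{f(u)}f(v) = f(v)-f(u)$, so the right-hand side of Theorem~\ref{th:relationship_divergence} collapses to
\[
\langle \nabla f, H\rangle_{\mathcal H(E;\mathbb R^m)}
= \sum_{u\in V}\sum_{v\sim u}\Bigl\langle f(v)-f(u),\ \tfrac{\sqrt{w(u,v)}}{2}H(u,v) - \tfrac{\sqrt{w(v,u)}}{2}H(v,u)\Bigr\rangle ,
\]
and by symmetry of the weight function $w(u,v)=w(v,u)$ the inner bracket is exactly $-\div H(u)$ according to the divergence formula in part~ii) of the Corollary. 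Hence the double sum is $\sum_{u\in V}\bigl\langle \nabla f$-less\,$\rangle$ — more precisely, it equals $\sum_{u\in V}\langle -\div H(u), f(v)-f(u)\rangle$ summed appropriately; I would need to re-index so that the inner product over edges becomes an inner product over vertices.

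The key steps, in order, are: first, invoke Theorem~\ref{th:relationship_divergence} (valid here since a symmetric weight function induces a symmetric edge set), then substitute $\PT = \mathrm{id}$ and $\log_{f(u)}f(v) = f(v)-f(u)$; second, use $w(u,v)=w(v,u)$ to recognize the bracketed term as $-\div H(u)$ from the Corollary; third, pull the $-\div H(u)$ out of the sum over $v\sim u$ (it does not depend on $v$) and write $\sum_{v\sim u}\langle f(v)-f(u), -\div H(u)\rangle = \langle \sum_{v\sim u}(f(v)-f(u)), -\div H(u)\rangle$; fourth, observe that $\sum_{v\sim u}(f(v)-f(u))$ is (up to the weights, which must be tracked carefully) related to $f(u)$ entrywise, so that the outer sum over $u\in V$ becomes the standard inner product $\langle f, -\div H\rangle_{\mathcal H(V;\mathbb R^m)}$ on $\mathbb R^{m\times n}$. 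Alternatively, and perhaps more cleanly, I would bypass Theorem~\ref{th:relationship_divergence} entirely and give a direct computation: expand $\langle \nabla f, H\rangle_{\mathcal H(E)} = \sum_{(u,v)\in E}\sqrt{w(u,v)}\langle f(v)-f(u), H(u,v)\rangle$, split $f(v)-f(u)$ into its two terms, re-index the $f(v)$-term by swapping $(u,v)\leftrightarrow(v,u)$ using symmetry of $E$ and $w$, and collect all terms multiplying a fixed $f(u)$; the coefficient of $f(u)$ is then precisely $-\div H(u)$ by Definition~\ref{def:divergence_manifold} specialized to $\mathbb R^m$.

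The main obstacle is purely bookkeeping: correctly re-indexing the edge sum into a vertex sum and verifying that the signs and the factor $\tfrac12$ in the divergence definition combine so that the coefficient of each $f(u)$ is exactly $-\div H(u)$ with no stray factor. There is no analytic difficulty here since all maps are linear and global; the result is essentially the discrete integration-by-parts / summation-by-parts formula on graphs, and the only care needed is ensuring the directed-edge conventions are handled consistently (each edge $(u,v)$ contributes to the vertex sum at $u$, while its reverse $(v,u)$ contributes at $v$). I would also briefly note that $H \in \mathcal H(E)$ here just means $H\colon E\to\mathbb R^m$ since all tangent spaces coincide, so no compatibility constraint among the $H(u,v)$, $v\sim u$, is active. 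Once the re-indexing is carried out, matching against the standard inner products on $\mathbb R^{m\times n\times n}$ and $\mathbb R^{m\times n}$ is immediate.
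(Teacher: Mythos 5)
Your overall strategy---specialize Theorem~\ref{th:relationship_divergence} to the flat case, or equivalently carry out the summation by parts directly---is the same one the paper intends, but the execution of your first route breaks down at the identification step. For a \emph{fixed} pair $(u,v)$, the bracket $\tfrac{\sqrt{w(u,v)}}{2}H(u,v)-\tfrac{\sqrt{w(v,u)}}{2}H(v,u)$ appearing in Theorem~\ref{th:relationship_divergence} is only the single summand of $-\div H(u)$ corresponding to the neighbor $v$; it is not $-\div H(u)$ itself, which already contains the sum over all $v\sim u$. It therefore genuinely depends on $v$ and cannot be pulled out of the inner sum, and your step four then collapses: $\sum_{v\sim u}(f(v)-f(u))$ equals $\sum_{v\sim u}f(v)-\operatorname{deg}(u)\,f(u)$, i.e.\ an unweighted graph Laplacian of $f$ at $u$, and is not ``related to $f(u)$ entrywise'' in any way that produces $\langle f,-\div H\rangle$. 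The correct mechanism is the one your alternative route sketches: split $\langle f(v)-f(u),\cdot\rangle$ into two terms, re-index the $f(v)$-half over the reversed edges using the symmetry of $E$ and $w$, and collect the terms multiplying each fixed $f(u)$ into a sum over $v\sim u$.

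That re-indexing, however, is precisely the ``bookkeeping'' you defer, and it is where the entire content of the statement sits. Carrying it out, the coefficient of $f(u)$ is $\sum_{v\sim u}\bigl(\sqrt{w(v,u)}H(v,u)-\sqrt{w(u,v)}H(u,v)\bigr)$, which by Definition~\ref{def:divergence_manifold} (note its factor $\tfrac12$ and its sign) equals $2\div H(u)$, not $-\div H(u)$. A two-vertex check with $w\equiv1$ and $m=1$ makes this concrete: $\langle\nabla f,H\rangle=(f(2)-f(1))\,(H(1,2)-H(2,1))$, whereas $\langle f,-\div H\rangle=\tfrac12\,(f(2)-f(1))\,(H(2,1)-H(1,2))$, so the two sides differ by a factor of $-2$. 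Hence the assertion that ``the signs and the factor $\tfrac12$ in the divergence definition combine so that the coefficient of each $f(u)$ is exactly $-\div H(u)$ with no stray factor'' is exactly the claim that must be verified, and it does not hold verbatim for the operators as defined in Section~\ref{ss:difference_operators_manifold}. To close the argument you must either exhibit the cancellation explicitly under whatever sign convention for $\div$ is actually in force (e.g.\ the convention of~\cite{ELB08}, where $\div H(u)=\sum_{v\sim u}\sqrt{w(u,v)}(H(u,v)-H(v,u))$ and the stated adjointness does hold), or account for the resulting factor of $-2$ relative to Definition~\ref{def:divergence_manifold}.
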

%
%
%
%
\section{Formulation of variational problems}
\label{s:numerics}
In this section we develop different algorithms to solve mathematical
problems for manifold-valued vertex functions on graphs. Based on the
introduced graph operators in Section~\ref{s:our_framework} we
translate important PDEs and variational models from continuous mathematics to
graphs in the tradition of~\cite{ELB08,GO08} and for the first time formulate
image processing problems for manifold-valued functions uniformly in a local and
nonlocal setting. These image processing problems include segmentation,
inpainting, and denoising of manifold-valued data. Since in this section we are
interested in demonstrating the feasibility of our approach, we will restrict
ourselves to the latter task. Thus, in the following we will formulate a class
of denoising tasks as variational minimization problems on graphs in
Section~\ref{ss:problem_formulation}. Subsequently, we will discuss necessary
optimality conditions for minimizers of these problems in
Section~\ref{ss:optimality_conditions}, which yield the introduced graph $p$-Laplace operators for manifold-valued functions, $p\geq 1$.
To compute respective minimizers we derive two different numerical schemes in
Section~\ref{ss:numeric_schemes}.
\subsection{Problem formulation}
\label{ss:problem_formulation}
Let \(G = (V,E,w)\) be finite weighted graph with symmetric weight function $w \colon E \rightarrow [0,1]$, i.e., $w(u,v) = w(v,u)$ for all $u,v \in V$. Furthermore, let \(f_0\colon V \to \mathcal M\) be a manifold-valued vertex function which models the given (perturbed) data.
One possibility is to assume that $f_0$ is an observation of the following data formation process, see e.g., \cite{LPS16}:
\begin{equation*}\label{eq:noise-model}
f_0 \ = \ \exp_{\hat{f}}(\varepsilon).
\end{equation*}
Here, $f_0$ is modeled as a noisy variant of the unknown data $\hat{f} \in \mathcal H(V; \mathcal M)$, which is altered by some perturbation~\(\varepsilon \in T_{\hat{f}}\mathcal M\). Recovering the original noise-free data $\hat{f}$ from $f_0$ in~\eqref{eq:noise-model} is a common example of an inverse problem. In order to guarantee the well-posedness of this problem one may incorporate a-priori knowledge about an unknown solution $f$, e.g., smoothness, and pose the problem from a statistical view as a maximum a-posteriori (MAP) estimation method. 
Typically, one aims to find minimizers of a convex energy functional $\E\colon \mathcal H(V; \mathcal M) \rightarrow \mathbb{R}$ consisting of data fidelity and regularization terms, i.e.,
\begin{equation*}
\E(f) \ = \ \mathcal D(f;f_0) \: + \: \mathcal R(f).
\end{equation*}
For further details on deriving convex variational problems from a statistical
modeling perspective we refer to~\cite{STJB13}.

In the following we discuss variational models for denoising of manifold-valued
vertex functions. We begin introducing a family of anisotropic energy
functionals for $p \geq 1$ to be optimized:
\begin{equation}
\label{eq:anisomodel}
\begin{split}
\E_\mathrm{a}(f)
\ \coloneqq& \ \frac{\lambda}{2}\,d_{\mathcal H(V; \mathcal M)}^2(f_0,f) \: + \: \lVert \nabla f \rVert_{\ell_{p,p}(E; \T_f \mathcal M)}^p\\
 =& \ \frac{\lambda}{2} \sum_{u\in V} d_{\mathcal M}^2(f_0(u),f(u))
  \: + \: \frac{1}{p}\sum_{(u,v)\in E} \lVert \nabla f(u,v)\rVert_{f(u)}^p.
\end{split}
\end{equation}
Within the data fidelity term~$d_{\mathcal H(V; \mathcal M)}$
denotes the metric of the space of vertex functions $\mathcal H(V; \mathcal M)$
measuring the distance of a function $f$ to the given data $f_0$,
$\lambda > 0$ is a fixed regularization parameter controlling the smoothness
of the solution, and the regularization term on the right side of~\eqref{eq:anisomodel} denotes the discrete anisotropic Dirichlet energy as a measure of local variance.
The denoising task is now to find a solution \(f^{\star} \in \mathcal H(V; \mathcal M)\) to the following optimization problem:
\begin{equation}
\label{eq:variational_problem_aniso}
f^{\star}\in\argmin_{f \in \mathcal H(V; \mathcal M)} \E_\mathrm{a}(f).
\end{equation}

Furthermore, we are interested in a family of energy functionals of the form
\begin{equation}
\label{eq:isomodel}
\begin{split}
\E_\mathrm{i}(f)
\ &\coloneqq \ \frac{\lambda}{2}\,d_{\mathcal H(V; \mathcal M)}^2(f_0,f) \: + \: \lVert \nabla f \rVert_{\ell_{p,2}(E; \T_f \mathcal M)}^p\\
 &= \ \frac{\lambda}{2} \sum_{u\in V} d_{\mathcal M}^2(f_0(u),f(u))
  \: + \: \frac{1}{p}\sum_{u\in V} \Bigl( \sum_{v\sim u} \lVert \nabla f(u,v)\rVert_{f(u)}^2 \Bigr)^\frac{p}{2},
\end{split}
\end{equation}
The difference to the previous
model in~\eqref{eq:anisomodel} is the exchange of the
regularization term by an isotropic norm. Analogously as before, this
results in optimization problems of the form
\begin{equation}
\label{eq:variational_problem_iso}
f^{\star}\in\argmin_{f \in \mathcal H(V; \mathcal M)} \E_\mathrm{i}(f).
\end{equation}
Note that the optimization problems~\eqref{eq:variational_problem_aniso} and \eqref{eq:variational_problem_iso} cover two interesting special cases. For $p=2$ both formulations are equivalent and can be interpreted as Tikhonov-regularized denoising problem, which aims to reconstruct smooth solutions $f^{\star}$. For $p=1$ the problem \eqref{eq:variational_problem_aniso} corresponds to anisotropic total variation-regularized denoising, which has been investigated recently in the context of manifold-valued functions in~\cite{WDS14,LSKC13,BPS16} using lifting techniques and the cyclic proximal
point algorithm. On the other hand \eqref{eq:variational_problem_iso} yields for \(p=1\) an isotropic total variation-regularized denoising formulation, which has up to now only been tackled by half-quadratic minimization in~\cite{BCHPS16}.

\subsection{Optimality conditions}
\label{ss:optimality_conditions}
To perform denoising we need to find minimizers of the discrete energy functionals
introduced in Section~\ref{ss:problem_formulation}. Note that while
these energies are convex for $p \geq 1$ on the Euclidean
space~\(\mathcal M=\mathbb R^m\), in general this only holds locally on manifolds. As mentioned in Section~\ref{ss:prel_man} even sets might not have a
global notion of convexity on manifolds. However, when restricting to the
case of Hadamard manifolds, i.e., manifolds of nonpositive sectional curvature,
all important properties carry over, i.e., convexity and even lower
semi-continuity and coerciveness.
For details on optimization on Hadamard manifolds we refer to the
monograph~\cite{Bac14b}.
In the case of Hadamard manifolds we may thus conclude the existence of respective minimizers~$f^{\star} \in
  \mathcal H(V; \mathcal M)$ of \eqref{eq:variational_problem_aniso} and \eqref{eq:variational_problem_iso}.
Note that for general manifolds the minimizers discussed in the following might only be local minimizers, since there is no general notion of global convexity.
Locally, we introduced (geodesic) convexity in Section 2, which we employ in the following.

To compute a minimizer of an energy
functional~$\E \colon \mathcal H(V; \mathcal M) \rightarrow \mathbb{R}$ we briefly introduce the notion of a derivative and a subdifferential of~$\E$, i.e., which fall back
to subdifferentials on manifolds. Since one is not able to perform basic
arithmetic operations on $\mathcal H(V; \mathcal M)$, e.g., addition of
functions or scalar multiplication, a gradient of the energy functional $\E$
has to be defined on the set of tangent spaces $\T \mathcal M$, see, e.g.,~the text book~\cite{Lee97}.

\begin{definition}[Differential and gradient]
Let us denote by \(\mathcal C^{\infty}(\mathcal M,\mathbb R)\) all smooth maps from \(\mathcal M\) to \(\mathbb R\).  Furthermore, let \(F\in \mathcal C^{\infty}(\mathcal M,R)\) and let \(\gamma\colon(-\varepsilon,\varepsilon) \to \mathcal M\) be a curve with
  \(\gamma(0) = x\in\mathcal M\) and \(\dot\gamma(0)=\xi\in \T_x\mathcal M\).
  Then the \emph{differential} \(D_xF\colon \T_x\mathcal M\to\mathbb R\) of $F$ is given by
  \[
    D_xF[\xi] \ \coloneqq \ (F\circ\gamma)'(0)\ .
  \]
  The gradient \(\nabla_{\mathcal M} F(x)\) can be characterized by
  \[
    \langle \nabla_{\mathcal M}F(x),\xi\rangle_x \, = \, D_xF[\xi], \quad \text{ for all } \, \xi\in T_x\mathcal M.
  \]
\end{definition}
However, for the interesting case of \(p=1\) in \eqref{eq:variational_problem_aniso} and \eqref{eq:variational_problem_iso} classical differentiability is too restrictive for obtaining suitable minimizers. For this reason we adapt the notion of a subdifferential from~\cite{FO98} to our setting.
\begin{definition}[Subdifferential]
  Let \(F\colon\mathcal M \to \mathbb R\) be a (locally) convex function
  and \(x\in\mathcal M\).
  Then \emph{the subdifferential} of \(F\) at \(x\) is defined by
  \begin{align*}
    \partial F \, \coloneqq \,
    \Bigl\{
      \xi\in \T_x\mathcal M
      &:
      F(\gamma(t)) \geq F(x) + t\langle\xi,\dot\gamma(0)\rangle,\\
      &\text{ for any curve }\gamma\colon(-\varepsilon,\varepsilon)\to\mathcal M\text{ with }\gamma(0)=x\text{ and any }t\geq 0
    \Bigr\}.
  \end{align*}
  An element~\(\xi\in\partial F(x)\) of the subdifferential of \(F\) at \(x\) is called \emph{subgradient}. 
\end{definition}
If the subdifferential is a singleton the subgradient is unique and equals the gradient \(\nabla_{\!\mathcal M}\). 
Before explicitly deriving the necessary optimality conditions for the introduced variational denoising models 
on manifold-valued functions in Section \ref{ss:numeric_schemes}, we would like to recall the following useful result from~\cite{ATV13}.
\begin{lemma}
Let $y\in\mathcal M$ denote a fixed point on the manifold and let \(F_p\colon\mathcal M \to\mathbb R\) be a function with \(F_p(x) = d^p(x,y)\) for \(p\geq 1\). Then we have
\[
  \nabla_{\!\mathcal M}F_p \ = \
  \begin{cases}
     \, -\frac{p\log_xy}{d^{2-p}(x,y)} & \text{ for } x\neq y \text{ or } p\geq 2,\\
     \, \hfill 0 & \text{ for } x=y \text{ and } 1<p<2.
  \end{cases}
\]
Furthermore, for \(p=1\) and \(x=y\) we have \(\partial F_1(x) = B_{x,1}\), where \(B_{x,r}\coloneqq\{\xi\in \T_x\mathcal M\ :\ \lVert\xi\rVert_x\leq r\} \subset \T_x\mathcal M\).
\end{lemma}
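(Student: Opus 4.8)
The plan is to reduce the manifold statement to a one-dimensional computation along a geodesic through $y$, handling the cases $p\geq 2$, $1<p<2$, and $p=1$ separately. First I would recall from Section~\ref{ss:prel_man} that for any unit-speed geodesic $\gamma$ with $\gamma(0)=x$ one has $d(\gamma(t),y)$ equal, at least locally near $x\neq y$, to a smooth function, because $d(\cdot,y)=\lVert\log_{(\cdot)}y\rVert$ and $\log$ is smooth away from the cut locus. The key classical fact I would invoke is that $\nabla_{\mathcal M}\bigl(\tfrac12 d^2(\cdot,y)\bigr)(x) = -\log_x y$ for $x$ in the domain $\mathcal D_y$ where the logarithm is defined; this is a standard consequence of the first variation of arclength (Gauss lemma), and I would cite it rather than reprove it.

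For the case $x\neq y$ (any $p\geq 1$) or $x=y$ with $p\geq 2$, I would write $F_p = g\circ F_2^{1/2}$ more conveniently as $F_p(x) = \bigl(d^2(x,y)\bigr)^{p/2}$ and apply the chain rule for the Riemannian gradient: $\nabla_{\mathcal M}F_p(x) = \tfrac{p}{2}\bigl(d^2(x,y)\bigr)^{p/2-1}\nabla_{\mathcal M}d^2(\cdot,y)(x) = \tfrac{p}{2}d^{\,p-2}(x,y)\cdot\bigl(-2\log_x y\bigr) = -\tfrac{p\log_x y}{d^{2-p}(x,y)}$. For $x=y$ with $p\geq 2$ one checks directly that the difference quotient $\tfrac{F_p(\gamma(t))-F_p(x)}{t} = \tfrac{t^p}{t} = t^{p-1}\to 0$ as $t\to 0$ along any unit-speed geodesic (using $d(\gamma(t),y)=t$ near $0$), so the differential vanishes and hence $\nabla_{\mathcal M}F_p(y)=0$; this is consistent with the formula since $d^{p-2}(y,y)\log_y y = 0$ when $p>2$, and the $p=2$ value $-\log_y y = 0$ as well.

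For $1<p<2$ with $x=y$, the same computation gives $(F_p\circ\gamma)'(0) = \lim_{t\to 0^+} t^{p-1} = 0$ since $p-1>0$, so $D_yF_p\equiv 0$ and thus $\nabla_{\mathcal M}F_p(y) = 0$, which is the stated second case. The only place one must be a little careful is that $F_p$ is not differentiable at $y$ in the classical bilateral sense for these $p$; but since $d(\gamma(t),y)=|t|$ near $0$ for a unit-speed geodesic and $|t|^{p-1}\to 0$, the one-sided derivatives agree and the differential is well-defined and zero. Finally, for $p=1$ and $x=y$, $F_1=d(\cdot,y)$ is convex near $y$, and I would verify that $\xi\in\partial F_1(y)$ iff $d(\gamma(t),y)=|t|\cdot\lVert\dot\gamma(0)\rVert \geq t\langle\xi,\dot\gamma(0)\rangle$ for all admissible curves and $t\geq 0$; taking $\gamma$ to be the geodesic in direction $\eta$ with $\lVert\eta\rVert_x=1$ reduces this to $1\geq\langle\xi,\eta\rangle$ for all unit $\eta$, i.e.\ $\lVert\xi\rVert_x\leq 1$, so $\partial F_1(y)=B_{y,1}$.

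The main obstacle is the subdifferential computation for $p=1$ at $x=y$: one needs that it suffices to test the subgradient inequality along geodesics (rather than arbitrary curves $\gamma$), and that the linearization $d(\gamma(t),y)\geq t\langle\xi,\dot\gamma(0)\rangle + o(t)$ forces the clean inequality without error terms spoiling the conclusion. This follows because for any curve $\gamma$ with $\gamma(0)=y$, $d(\gamma(t),y) = \lVert\dot\gamma(0)\rVert_y\,t + o(t)$, so the defining inequality $d(\gamma(t),y)\geq t\langle\xi,\dot\gamma(0)\rangle$ for all small $t\geq 0$ is equivalent to $\lVert\dot\gamma(0)\rVert_y \geq \langle\xi,\dot\gamma(0)\rangle$; ranging over all tangent vectors as initial velocities gives exactly $\lVert\xi\rVert_y\leq 1$ by Cauchy--Schwarz and its equality case. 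I would also note that this is precisely the content of the cited result~\cite{ATV13}, so a short proof referencing it, with the geodesic reduction spelled out, is all that is needed.
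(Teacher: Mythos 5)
Your proposal is correct and follows essentially the same route as the paper, which recalls this as a known result from the cited reference and justifies the subdifferential cases exactly as you do: reducing the subgradient inequality to $d(x,z)\geq\langle\xi,\log_xz\rangle_x$ for $z$ near $x$ and concluding $\lVert\xi\rVert_x\leq 1$ by Cauchy--Schwarz, with the smooth cases handled by the standard gradient of the squared distance and the chain rule. Your version simply fills in the details (difference quotients at $x=y$, the reduction to geodesic test curves) that the paper leaves as a sketch.
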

The second case of the subgradient can be derived by looking at the
subdifferential and observing that it only contains \(0\).
The last subdifferential follows from the definition of the subdifferential, cf.~\cite{FO98},
\[
  \partial F_1(x) \, \coloneqq \,
    \bigl\{
      \xi \in T_x\mathcal M\ : \ d(x,z) \geq \langle\xi,\log_xz\rangle_x
      \text{ for all }z\in\mathcal M_x\},
\]
in a certain ball~\(\mathcal M_x\subset\mathcal D_x\subset\mathcal M\)
around \(x\) where the exponential map is injective and \(\xi = \frac{1}{d_{\mathcal M}(x,z)}\log_xz\) for \(x\neq z\).
Note that especially \(0\in\partial F_p(x)\) for \(x=y\) and \(p\geq 2\).

Based on this result we are able to derive the conditions for a minimizer of
the anisotropic energy functional~\eqref{eq:variational_problem_aniso}.
Note that in this discrete setting the energy functional is in fact a function
defined on the (product) manifold~\(\mathcal M^{\lvert V\rvert}\).
We derive
\begin{align*}
  0 &\in\ \partial\E_{\mathrm{a}}(f)\\
    \ &\quad=\ \partial
        \left( \frac{\lambda}{2}\,\sum_{u \in V} d_{\mathcal M}^2(f_0(u),f(u))
      \: + \: \frac{1}{p}\sum_{(u,v)\in E}
        \lVert \nabla f\rVert_{f(u)}^p \right)(f) \\
    \ &\quad=\ \frac{\lambda}{2}
        \sum_{u\in V} \nabla_{\mathcal M} d_{\mathcal M}^2(f_0(u),f(u)) \: + \: \frac{1}{p}\sum_{u\in V} \sum_{v\sim u} (w(u,v))^{\frac{p}{2}} \partial d^p_{\mathcal M}(f(u),f(v)).
\end{align*}
Note that only in the case \(p=1\) and \(f(u)=f(v)\), \(u,v\in V\), the subdifferential in the second summand is not a singleton. As discussed above the subdifferential however  contains \(0\). Furthermore, in the case \(p<2\) and \(f(u)=f(v)\) we have defined the anisotropic graph \(p-\)Laplacian as \(0\).
Bearing these special cases in mind we can further simplify the optimality conditions as
\begin{align*}
    0 \ &\overset{!}{=}\ - \lambda \sum_{u\in V} \log_{f(u)}f_0(u)
      \:-\:\sum_{u\in V}\sum_{v\sim u} (w(u,v))^{\frac{p}{2}}
        \,d_{\mathcal M}^{p-2}(f(u),f(v)) \log_{f(u)} f(v)\\
      \ &=\ \sum_{u\in V} \left( -\lambda \log_{f(u)} f_0(u)
      \: + \: \Delta^\mathrm{a}_{p}f(u) \right)
\end{align*}
This leads to the following PDE on a finite weighted graph as necessary condition for a minimizer of \eqref{eq:variational_problem_aniso}:
\begin{equation}\label{eq:PDE_aniso}
\Delta^\mathrm{a}_{p}f(u) \: - \: \lambda \log_{f(u)} f_0(u) \ = \ 0 \: \in T_{f(u)}\mathcal M \quad \text{ for all } u\in V.
\end{equation}

Following analogously the argumentation above 
we are able to derive the necessary conditions for a minimizer of the isotropic energy functional \eqref{eq:variational_problem_iso} as follows:
\begin{align*}
  0 &\in\ \partial\E_{\mathrm{i}}(f)\\
    &\quad=\ \partial\Bigl( \frac{\lambda}{2}\,%
      \sum_{u \in V} d_{\mathcal M}^2(f(u),f_0(u))
      \: + \: \frac{1}{p}\sum_{u\in V} \Bigl(\sum_{v\sim u}
        \lVert\nabla f(u,v)\rVert^2_{f(u)}\Bigr)^{\frac{p}{2}} \Bigr)(f)\\
    &\quad=\ \frac{\lambda}{2} \sum_{u\in V}\nabla_{\mathcal M} d_{\mathcal M}^2(f_0(u),f(u))\\
    &\quad\qquad + \:\frac{1}{p}\sum_{u\in V}
      \frac{p}{2} \Bigl(
        \sum_{v\sim u} w(u,v) d^2_{\mathcal M}
          (f(u),f(v))\Bigr)^{\frac{p-2}{2}}
      \sum_{v\sim u} w(u,v) \nabla_{\mathcal M} d^2_{\mathcal M}(f(u),f(v))\\
\end{align*}
Hence we obtain
\begin{align*}
    0 &\overset{!}{=}\ - \lambda \sum_{u\in V} \log_{f(u)}f_0(u)
    \: - \: \sum_{u\in V} \Bigl(
      \sum_{v\sim u} w(u,v) d^2_{\mathcal M} (f(u),f(v))\Bigr)^{\frac{p-2}{2}}
      \sum_{v\sim u} w(u,v) \log_{f(u)} f(v)\\
      &=\ \sum_{u\in V} \left( -\lambda \log_{f(u)} f_0(u) \: + \: \Delta^\mathrm{i}_{p}f(u) \right)
\end{align*}
This leads to the following PDE on a finite weighted graph as necessary condition for a minimizer of \eqref{eq:variational_problem_iso}:
\begin{equation}
\label{eq:PDE_iso}
\Delta^\mathrm{i}_{p}f(u) \: - \: \lambda \log_{f(u)} f_0(u) \ = \ 0 \: \in T_{f(u)}\mathcal M, \quad \text{ for all } u\in V.
\end{equation}

\subsection{Numerical optimization schemes}
\label{ss:numeric_schemes}
In the following we derive numerical minimization schemes to approximate solutions to the necessary optimality conditions of both the anisotropic and the isotropic denoising problems discussed in Section \ref{ss:optimality_conditions}. 
In particular, we will discuss two different iterative methods converging to solutions of the PDEs 
in~\eqref{eq:PDE_aniso} and~\eqref{eq:PDE_iso}. For the sake of simplicity we discuss both approaches collectively by introducing an operator $\Delta_p \colon \mathcal H(V; \mathcal M) \rightarrow \mathcal H(V; T\mathcal M)$ as place holder for the anisotropic and isotropic $p$-Laplacian operators. 
We only distinguish between the two formulations when giving explicit computation formulas at the end of each paragraph.
\paragraph{Explicit scheme by time-discretized evolution.}
The basic idea of our first numerical approach is to extend the domain of our vertex functions by an additional artificial time dimension and subsequently compute a steady-state solution of the resulting time-dependent PDEs, see e.g., \cite{S86}. Thus, the aim is to find a stationary solution of the parabolic PDEs with initial value conditions of the form
\begin{equation}
\label{eq:parabolic_formulation}
	\begin{cases}
	\ \frac{\partial f}{\partial t}(u,t) \, &= \ \Delta_pf(u,t) \: - \: \lambda \log_{f(u,t)} f_0(u,t) \qquad \text{ for all } u \in V, \, t \in (0, \infty),\\
	\ f(u,0) \, &= \ f_0(u) \hspace{4.91cm} \text{ for all } u\in V, \, t=0,
	\end{cases}
\end{equation}
i.e., we try to compute a solution $f \in \mathcal H(V\times(0,\infty); \mathcal M)$ such that
\begin{equation*}
\frac{\partial f}{\partial t}(u,t) \ = \ 0, \quad \text{ for all } u \in V.
\end{equation*}
Note that finding a stationary solution to~\eqref{eq:parabolic_formulation}
directly yields a minimizers of the original energy functionals
problems~\eqref{eq:anisomodel}
and~\eqref{eq:isomodel} as the left side becomes zero. Setting $\lambda = 0$
the problem \eqref{eq:parabolic_formulation} covers two well-known special
cases. For $p=2$ one gets a diffusion equation for manifold-valued vertex
functions which is equivalent to the discrete heat equation for a local neighborhood in the finite weighted graph $G$. For $p=1$ one gets the well-known total variation flow.

To solve the initial value problem \eqref{eq:parabolic_formulation} we discretize the time derivative on the left hand side of the equation using an forward difference scheme and assuming the right hand side of the equation to be known from the last time step, i.e.,
\begin{equation*}
	\frac{\log_{f_n(u)} f_{n+1}(u)}{\Delta t} \ = \ \Delta_p f_n(u) \: - \: \lambda \log_{f_n(u)} f_0(u)\ .
\end{equation*}
Here, $\Delta t$ denotes the step size of the evolution process induced by the time discretization. This gives us the following explicit formulas for the iterative computation of solutions to the anisotropic and isotropic denoising problems,
\begin{equation}\label{eq:aniso-explicit}
	\begin{split}
	&f_{n+1}(u) \ = \ \exp_{f_n(u)} \left( \Delta t \, ( \Delta_p^\mathrm{a} f_n(u) - \lambda \log_{f_n(u)} f_0(u) ) \right) \\
	&\!\! = \ \exp_{f_n(u)} \Bigl( \Delta t \, \Bigl( -\sum_{v\sim u}\sqrt{w(u,v)}^p d^{p-2}_{\mathcal M}(f_n(u),f_n(v))\log_{f_n(u)}f_n(v) - \lambda \log_{f_n(u)} f_0(u) \Bigr)\Bigr)
	\end{split}
\end{equation}
and
\begin{equation}\label{eq:iso-explicit}
	\begin{split}
		&f_{n+1}(u) \ = \ \exp_{f_n(u)} \left( \Delta t \, ( \Delta_p^\mathrm{i} f_n(u) - \lambda \log_{f_n(u)} f_0(u) ) \right) \\ 
	&\!\! = \ \exp_{f_n(u)} \Bigl( \Delta t \, \Bigl( - \,
  \Bigl(
    \sum_{v\sim u} w(u,v) d^2_\M(f_n(u),f_n(v))
  \Bigr)^{\frac{p-2}{2}}
  \\&\hspace{4cm}\times
  \sum_{v\sim u}
  w(u,v)
  \log_{f_n(u)}f_n(v) - \lambda \log_{f_n(u)} f_0(u) \Bigr) \Bigr),
	\end{split}
\end{equation}
respectively.

Note that this Euler forward time discretization realizes a gradient descent minimization algorithm. 
Although this approach is very simple to implement, it leads for $p < 2$ to very strict Courant-Friedrich-Lewy (CFL) conditions 
in order to guarantee the stability of this numerical scheme. This means that 
updates $f^n \rightarrow f^{n+1}$ can only be performed with very small 
step size $\Delta t$ and hence convergence to possible solutions may need a huge amount of iterations. 
To alleviate this problem one may consider higher-order schemes, e.g., Runge-Kutta discretization schemes.
A general CFL condition for a broad class of graph operators can be found in \cite{ETT15} for the case $\mathcal M = \mathbb{R}^n$.

\paragraph{Semi-implicit scheme by linearization.}
Another approach, which we discuss in the following, is based on the idea of approximating the original problem \eqref{eq:parabolic_formulation} in every iteration step and solving a simple linear equation system. To perform this linearization we first reformulate \eqref{eq:parabolic_formulation} to an equivalent problem by inserting two zeros terms $\log_{f(u)} f(u) = 0$. 

For the anisotropic case we get
\begin{align*}
	 0 \ &=\  \Delta^\mathrm{a}_{w,p}f(u) \: - \: \lambda \log_{f(u)} f_0(u)\\
	 &=\  -\sum_{v\sim u}\sqrt{w(u,v)}^p d_{\mathcal M}(f(u),f(v))^{p-2} \, (\log_{f(u)}f(v) - \log_{f(u)}f(u))\\
   &\hspace{4cm} - \ \lambda \, (\log_{f(u)} f_0(u) - \log_{f(u)}f(u)).
\end{align*}
By introducing \[
b_{\mathrm{a}}(u,v) \coloneqq \sqrt{w(u,v)}^p d_{\mathcal M}(f(u),f(v))^{p-2}\]
we can write this more concisely as
\[
0	= -\sum_{v\sim u}b_{\mathrm{a}}(u,v)(\log_{f(u)}f(v) - \log_{f(u)}f(u))
\ - \ \lambda \, (\log_{f(u)} f_0(u) - \log_{f(u)}f(u)).
\]
Analogously for the isotropic case and writing
\[
  b_{\mathrm{i}}(u,v)\coloneqq \Bigl(\sum_{x\sim u}  w(u,x) \, d_{\mathcal M}^2(f(u),f(x))
  \Bigr)^{\frac{p-2}{2}} w(u,v)
\]
we obtain
\begin{align}\label{eq:iso-reformulated}
	 0 \ &= \  \Delta^\mathrm{i}_{w,p}f(u) \: - \: \lambda \log_{f(u)} f_0(u)\\
	&= - \,
  \sum_{v\sim u} b_{\mathrm{i}}(u,v)(\log_{f(u)}f(v) - \log_{f(u)}f(u))
	- \ \lambda \ (\log_{f(u)} f_0(u) - \log_{f(u)}f(u)).\nonumber
\end{align}
We can investigate both the isotropic and the anisotropic model in the same
manner by introducing \(b(u,v)\) which refers to exactly one of the above
introduced coefficients. The discussed problems can be linearized by assuming
that~\(b(u,v)\) is known (from the last iteration $f_n$) and approximating
the introduced zero terms as~$\log_{f(u)}f(u) \approx \log_{f_{n+1}(u)}f_n(u)$.
Hence, we need to solve a linear equation system $Bx = c$.
Note that this corresponds to a semi-implicit scheme for the original
optimality conditions~\eqref{eq:PDE_aniso} and~\eqref{eq:PDE_iso}, respectively.
For the case of graph constructions with relatively small neighborhoods, e.g.,
for local grid graphs or $k$-nearest neighbor graphs with small integers $k$,
the resulting matrix $B$ is very sparse. Hence, it is advisable to use iterative
methods to approximate solutions of the linear equation system. 

By applying Jacobi's method we get the following relationship
in~\(T_{f_n(u)}\mathcal M\):
\begin{equation*}
\left( \lambda + \sum_{v\sim u} b(u,v)\right) \log_{f_n(u)}f_{n+1}(u) \ = \ \sum_{v\sim u} b(u,v) \log_{f_n(u)}f_n(v) \, + \, \lambda \log_{f_n(u)} f_0(u).
\end{equation*}
This leads to the iterative update formula
\begin{equation}\label{eq:Jacobi-iterate}
	f_{n+1}(u) \ = \ \exp_{f_n(u)} \left( \frac{\sum_{v\sim u}b(u,v)\log_{f_n(u)}f_n(v) + \lambda \log_{f_n(u)} f_0(u) }{\lambda \: + \: \sum_{v\sim u}b(u,v)} \right) ,
	\end{equation}
for which the terms $b(u,v)$ are given as above.

We would like to give some additional notes on the proposed numerical minimization schemes discussed above.
\begin{enumerate}
\item In order to avoid division by zero we use a relaxation of the terms \(d_\M^{p-2}\) for \(0 < p < 2\) by adding a small term \(\varepsilon = 10^{-7}\). Thus, we smooth our regularization functionals at zero.
\item The prefactor~\(b_{\mathrm{i}}(u,v)\) of the isotropic graph $p$-Laplacian in \eqref{eq:iso-reformulated} is equal for all neighbors $v \sim u$ and hence can be precomputed once per vertex.
\item Although Jacobi's method is in general not as efficient as related methods, e.g., the Gauss-Seidel method or successive overrelaxation methods, it has the advantage that one is able to compute all \(\log\) terms and all geodesic distances~\(d_{\mathcal M}\) needed in~\eqref{eq:Jacobi-iterate} in a vectorial manner, which gives a higher overall computational performance
when exploiting vectorization techniques.
\item For small values of \(\lambda\) tending to zero we observed that the proposed semi-implicit scheme tends to be less robust than the proposed explicit scheme. This is likely due to the reason that the employed linearization for Jacobi's method is not valid in cases with almost no data fidelity. Hence, for small values of the regularization parameter \(\lambda\) we will use the explicit scheme in our experiments.
\item While the derivation in this Section is restricted to \(p\geq 1\), the numerical schemes can also be applied for the case~\(p\geq 0\).
The investigation of the quasi-norm cases is a point of future work.
\end{enumerate}
%
\section{Applications}
\label{s:applications}
In the following we present several examples illustrating the large variety of
problems that can be tackled using the proposed manifold-valued graph framework. 
We begin by discussing our general experiment setup. We first focus
on image domains in Section~\ref{ss:applications_images}, i.e.,~manifold-valued data defined on regular grids and investigate the
evolutionary flow for a local graph modeling spatial pixel neighborhoods.
Furthermore, we compare our framework for the special case of nonlocal denoising
of phase-valued data to a state-of-the-art method. Finally, we demonstrate a
real-world application from denoising surface normals in digital elevation maps
from LiDAR data.
Subsequently, we model manifold-data measured on samples of an explicitly given surface and in particular illustrate denoising of diffusion tensors measured on a sphere in Section~\ref{ss:applications_sphere}.
Finally, we investigate denoising of real DT-MRI data from medical applications in Section~\ref{s:DT-MRI}
both on a regular pixel grid as well as on an implicitly given surface.

All algorithm were implemented in \textsc{Mathworks Matlab} by extending the open source
software package Manifold-valued Image Restoration Toolbox (MVIRT)\footnote{open source, available at~%
\href{http://www.mathematik.uni-kl.de/imagepro/members/bergmann/mvirt/}{www.mathematik.uni-kl.de/imagepro/members/bergmann/mvirt/}}.

\paragraph{Patch distances on manifolds.}
Similar to the real- or vector-valued case we can employ a \emph{patch-based} distance function. Let \(f\in \mathcal M^{n,m}\) be a manifold-valued image of dimension \(n\times m\). Let us further denote by \(f_{i}^{s}\in \mathcal M^{2s+1,2s+1}\) the \(2s+1\times 2s+1\)-sized patch with center \(f_{i}^s(s+1,s+1)
= f(i_1,i_2), i=(i_1,i_2)\in \mathcal G\coloneqq \{1,\ldots,n\}\times\{1,\ldots,m\}\).
For constructing patches at the boundary of the domain we assume periodic boundary conditions if \(i_1-s<0, i_2<0, i_1+s>n,\) or \(i_2+s>m\).

Then the patch-based similarity measure for manifold-valued data is given by
\begin{equation}\label{eq:patchdistance}
  \operatorname{PSM}^2(i,j) \coloneqq
  \sum_{k,l=-s}^s d^2_{\mathcal M}(f_{i}^s(k,l),f_j^s(k,l)),
  \qquad i,j\in \mathcal G.
\end{equation}

Using this similarity measure we can introduce the (nonlocal) \(\varepsilon\)-ball graph
\(G_{\text{nl}}\) of an image \(f\), where \(V=\mathcal G\), i.e., every pixel is a vertex and \(E_\varepsilon = \{(i,j) \in V\times V : 1-\operatorname{PSM}(i,j) < \varepsilon\}\). Furthermore, we can also introduce the (nonlocal) \(k\)-neighbor graph, where \(i\in V\) is connected via an edge
to its \(k\) most similar patches.

\paragraph{Evaluation.}
For all our experiments we performed a parameter search with respect to the
regularization parameter \(\lambda\) in the models~\eqref{eq:anisomodel}
and~\eqref{eq:isomodel} in order to minimize the
mean squared error (MSE), which is given by
\[
  \operatorname{MSE}(f)
  \coloneqq
  \frac{1}{\lvert V \rvert}\sum_{v\in V}
    d_{\mathcal M}^2(f(v),\hat{f}(v)),
\]
for which \(\hat{f}\) denotes the original data without any perturbations.
%
%
\subsection{Manifold-valued images}
\label{ss:applications_images}
\paragraph{Evolution equations for the graph \(p\)-Laplacian.}
\begin{figure}
  \begin{subfigure}{.03\textwidth}\centering\ 
  \end{subfigure}
  \begin{subfigure}{.235\textwidth}\centering original image
  \end{subfigure}
  \begin{subfigure}{.235\textwidth}\centering
    \(i=1\,000\)
  \end{subfigure}
  \begin{subfigure}{.235\textwidth}\centering
    \(i=10\,000\)
  \end{subfigure}
  \begin{subfigure}{.235\textwidth}\centering
    \(i=100\,000\)
  \end{subfigure}
  \\
  \begin{subfigure}{.03\textwidth}\centering
    \rotatebox{90}{\small\(p=2\), (an)isotropic}
  \end{subfigure}
  \begin{subfigure}{.235\textwidth}\centering
    \includegraphics[width=.98\textwidth]{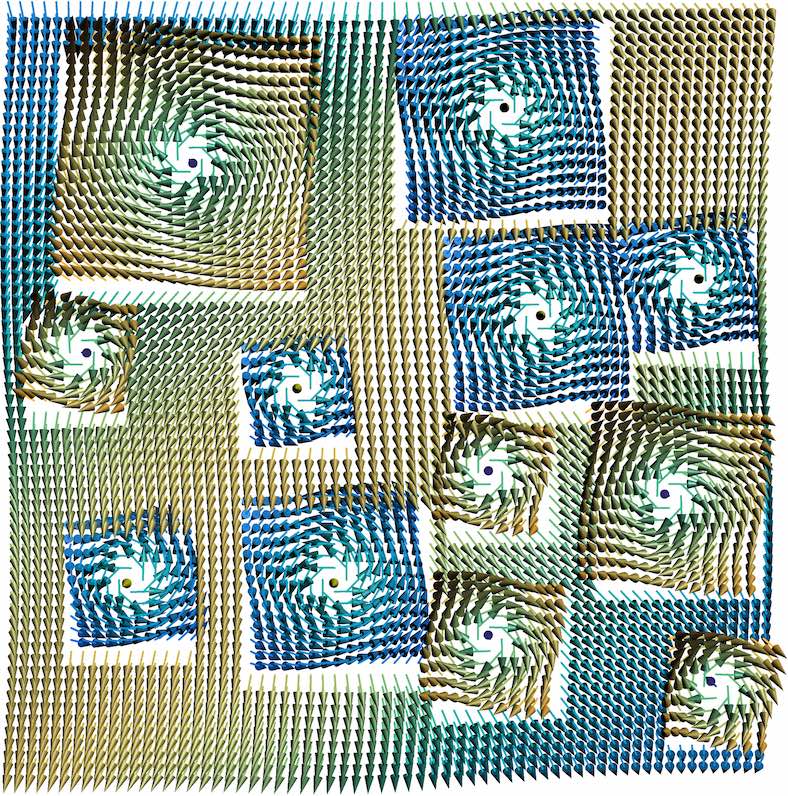}
  \end{subfigure}
  \begin{subfigure}{.235\textwidth}\centering
    \includegraphics[width=.98\textwidth]{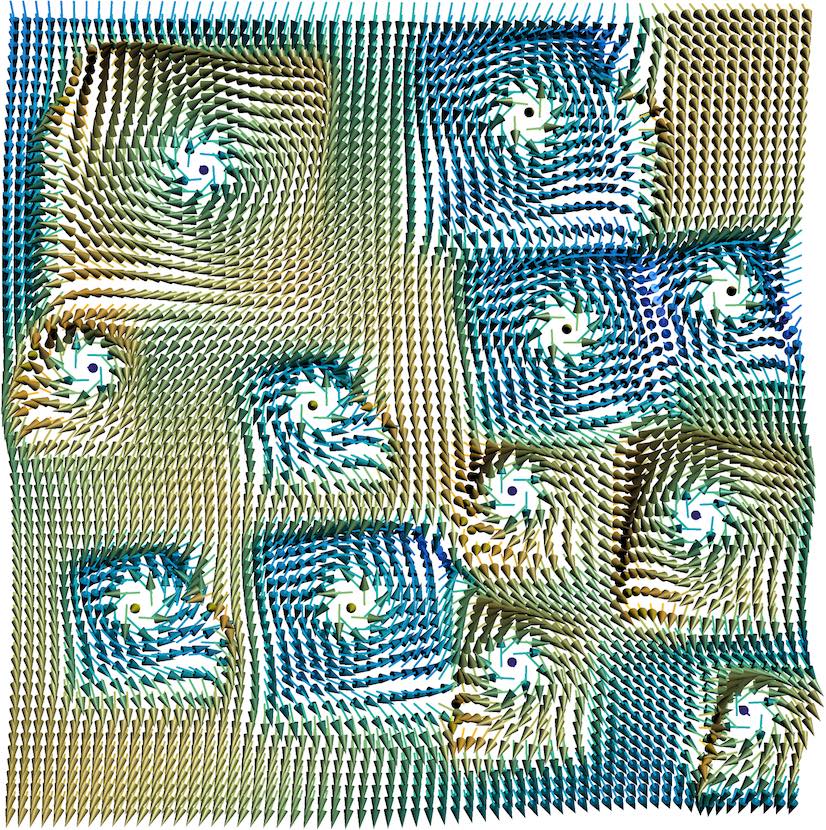}
  \end{subfigure}
    \begin{subfigure}{.235\textwidth}\centering
    \includegraphics[width=.98\textwidth]{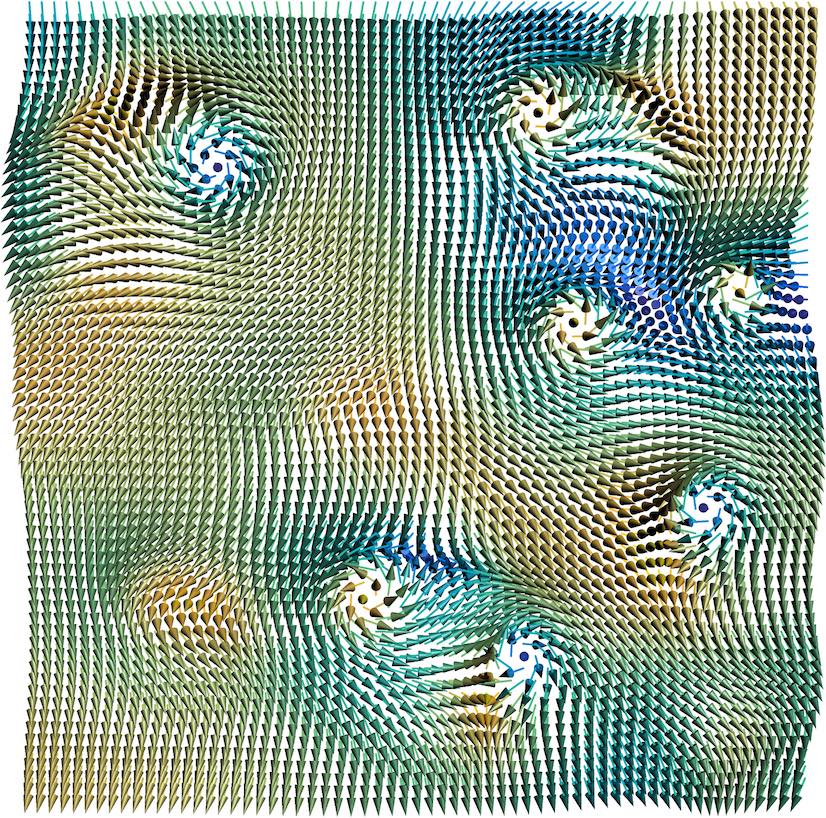}
  \end{subfigure}
  \begin{subfigure}{.235\textwidth}\centering
  \includegraphics[width=.98\textwidth]{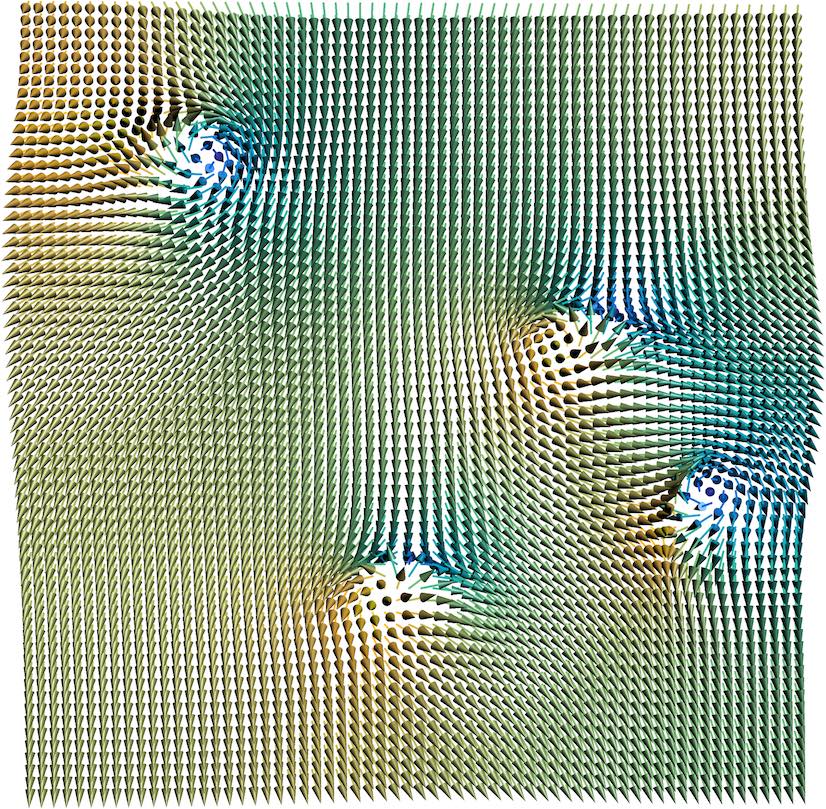}
  \end{subfigure}
  \\
  \begin{subfigure}{.03\textwidth}\centering
    \rotatebox{90}{\small\(p=1\), isotropic}
  \end{subfigure}
  \begin{subfigure}{.235\textwidth}\centering
    \includegraphics[width=.98\textwidth]{S2Whirl-original.jpg}
  \end{subfigure}
  \begin{subfigure}{.235\textwidth}\centering
    \includegraphics[width=.98\textwidth]{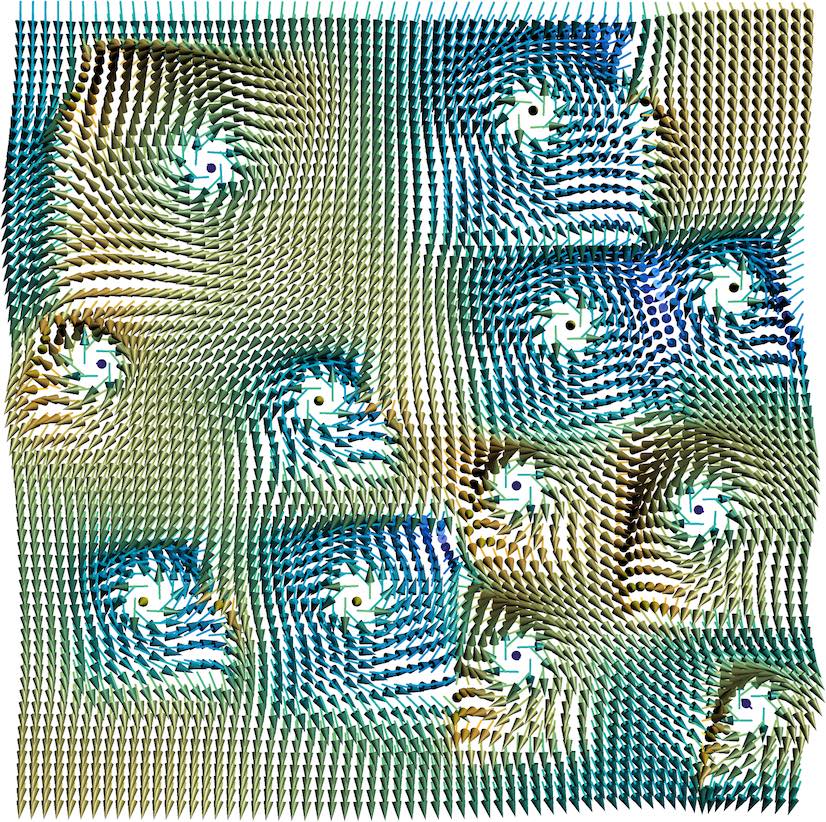}
  \end{subfigure}
  \begin{subfigure}{.235\textwidth}\centering
    \includegraphics[width=.98\textwidth]{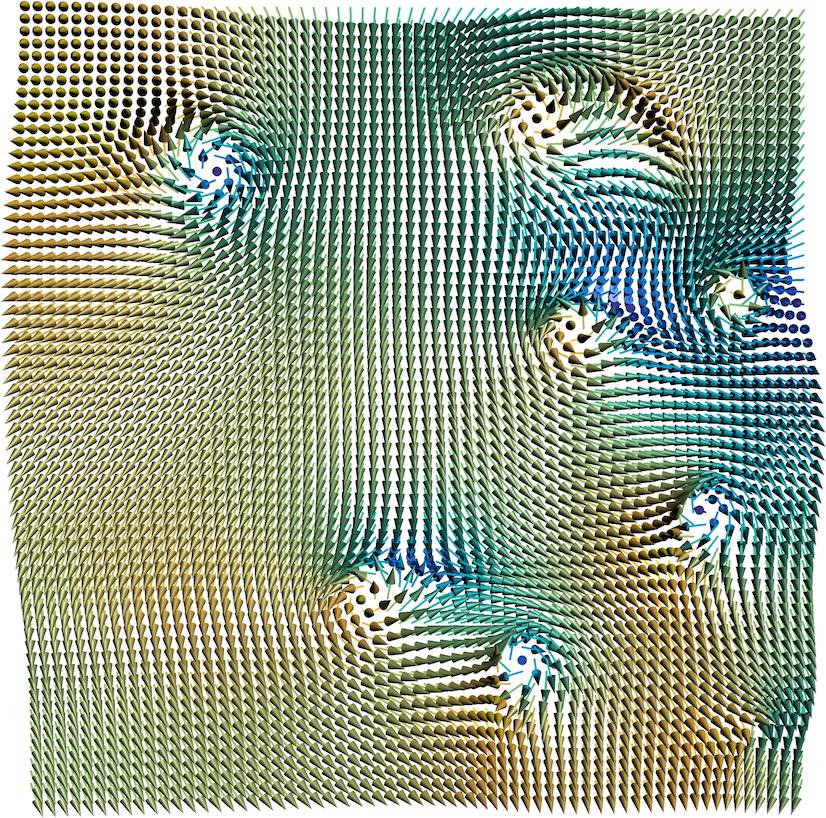}
  \end{subfigure}
  \begin{subfigure}{.235\textwidth}\centering
  \includegraphics[width=.98\textwidth]{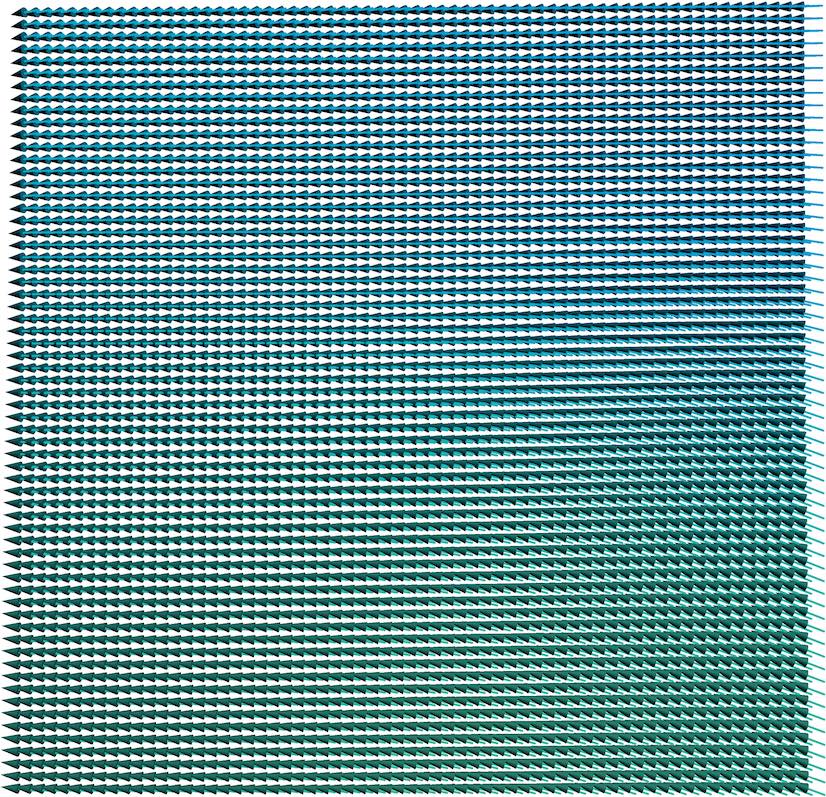}
  \end{subfigure}
  \\
  \begin{subfigure}{.03\textwidth}\centering
    \rotatebox{90}{\small\(p=1\), anisotropic}
  \end{subfigure}
  \begin{subfigure}{.235\textwidth}\centering
    \includegraphics[width=.98\textwidth]{S2Whirl-original.jpg}
  \end{subfigure}
  \begin{subfigure}{.235\textwidth}\centering
    \includegraphics[width=.98\textwidth]{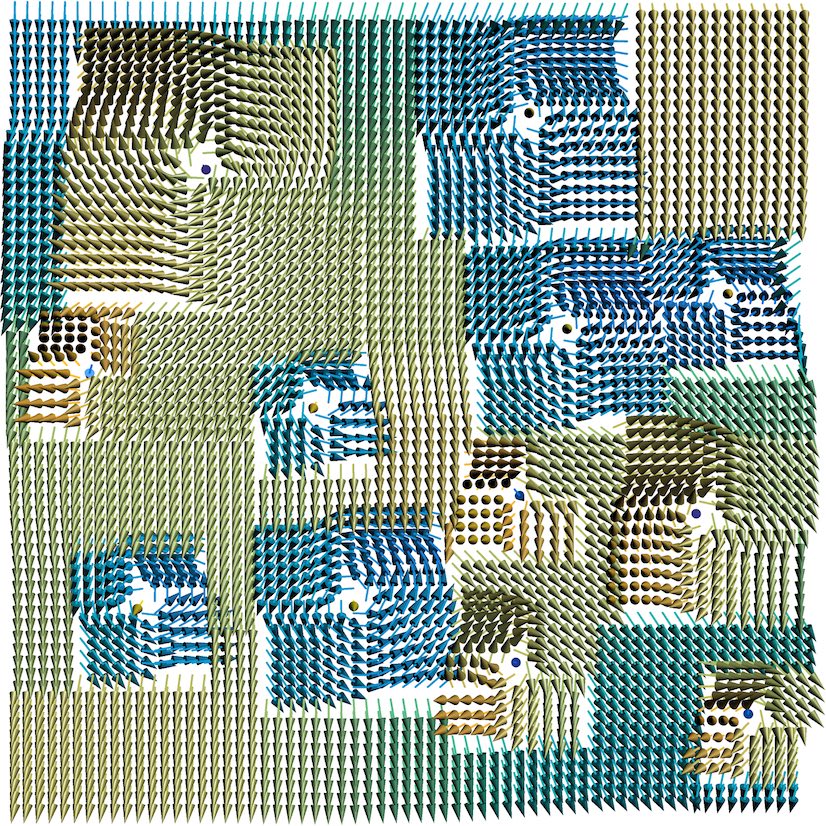}
  \end{subfigure}%
   \begin{subfigure}{.235\textwidth}\centering
    \includegraphics[width=.98\textwidth]{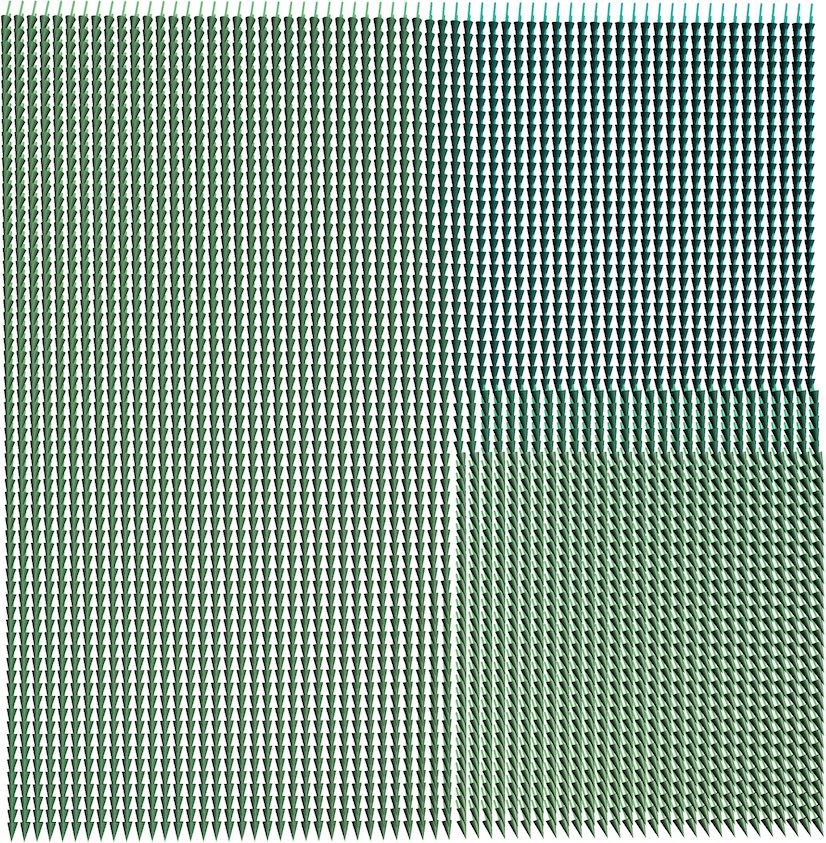}
  \end{subfigure}
  \begin{subfigure}{.235\textwidth}\centering
    \includegraphics[width=.98\textwidth]{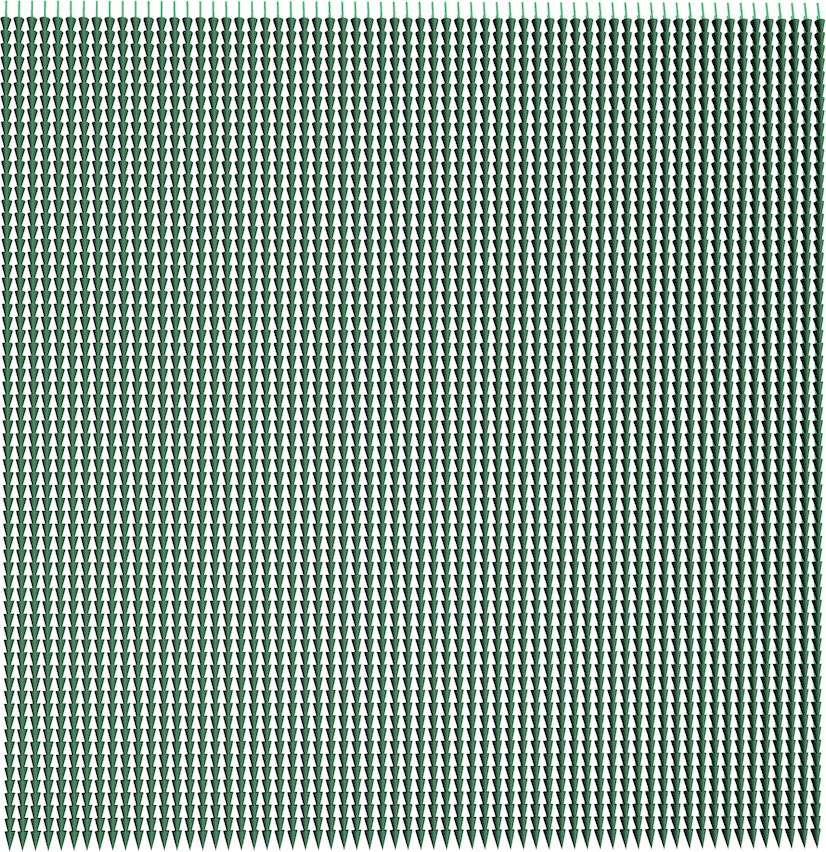}
  \end{subfigure}
  \\
  \begin{subfigure}{.03\textwidth}\centering
    \rotatebox{90}{\small\(p=0.1\), isotropic}
  \end{subfigure}
  \begin{subfigure}{.235\textwidth}\centering
    \includegraphics[width=.98\textwidth]{S2Whirl-original.jpg}
  \end{subfigure}
  \begin{subfigure}{.235\textwidth}\centering
    \includegraphics[width=.98\textwidth]{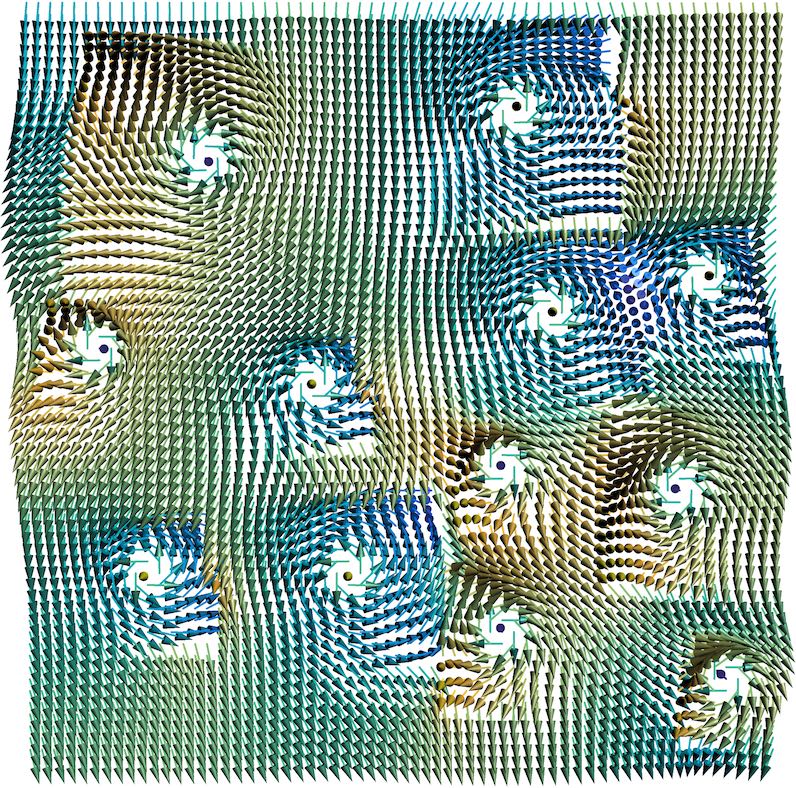}
  \end{subfigure}
  \begin{subfigure}{.235\textwidth}\centering
    \includegraphics[width=.98\textwidth]{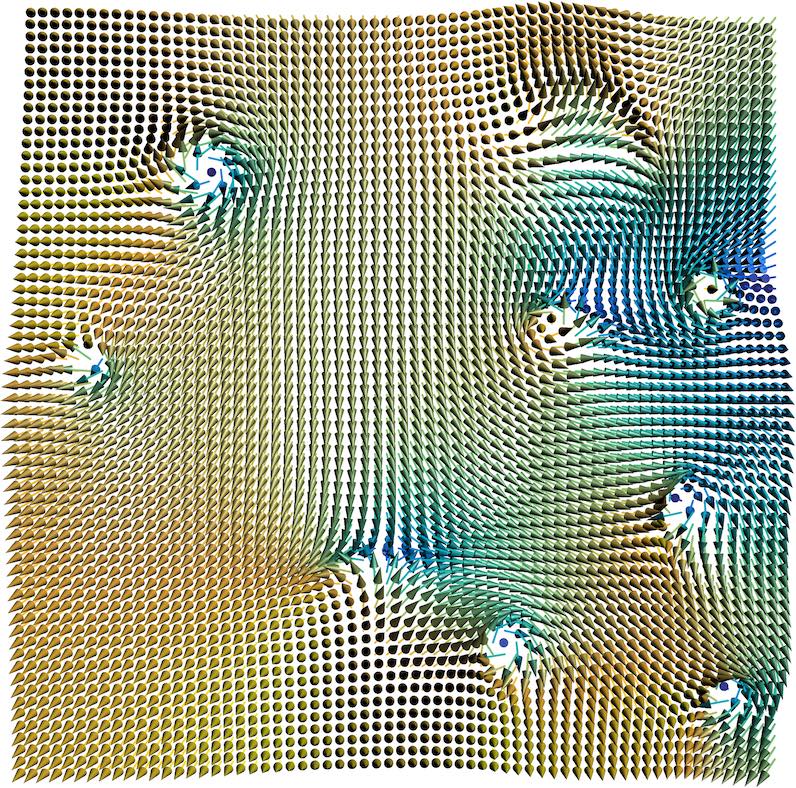}
  \end{subfigure}
  \begin{subfigure}{.235\textwidth}\centering
    \includegraphics[width=.98\textwidth]{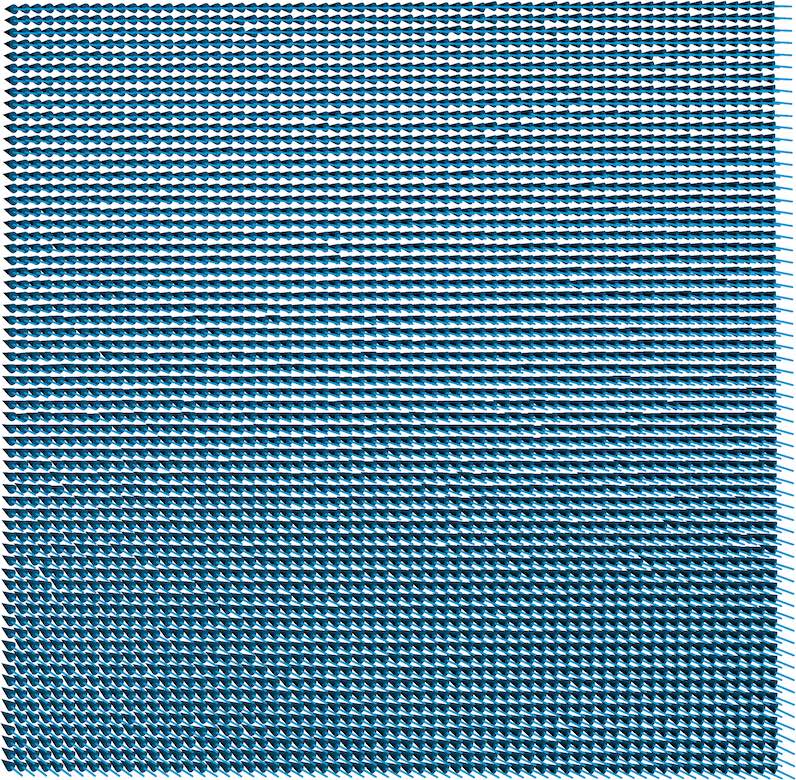}
  \end{subfigure}
  \begin{subfigure}{.03\textwidth}\centering
    \rotatebox{90}{\small\(p=0.1\), anisotropic}
  \end{subfigure}
  \begin{subfigure}{.235\textwidth}\centering
    \includegraphics[width=.98\textwidth]{S2Whirl-original.jpg}
  \end{subfigure}
  \begin{subfigure}{.235\textwidth}\centering
    \includegraphics[width=.98\textwidth]{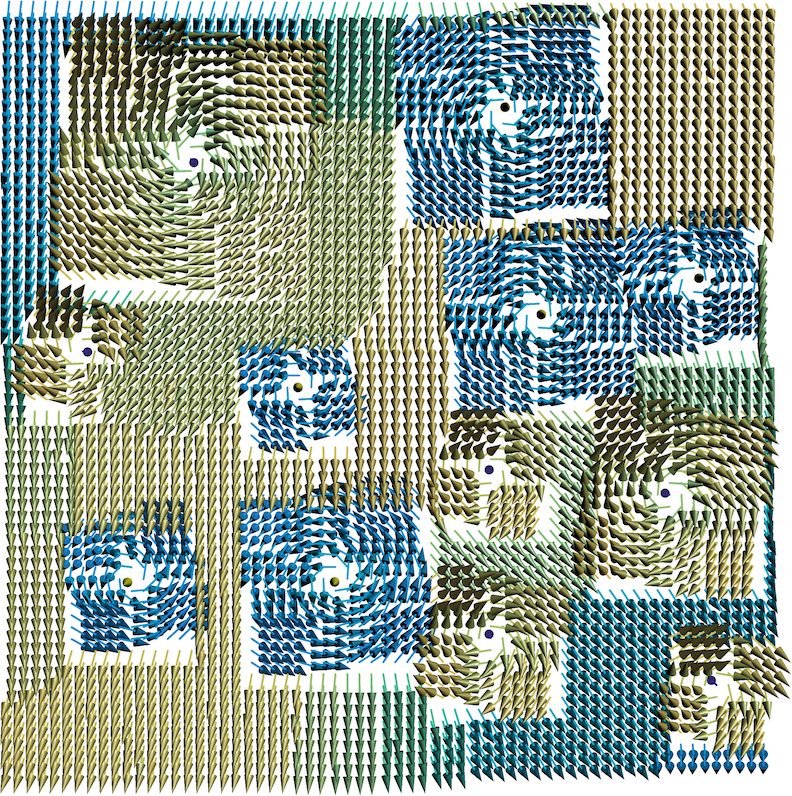}
  \end{subfigure}
    \begin{subfigure}{.235\textwidth}\centering
    \includegraphics[width=.98\textwidth]{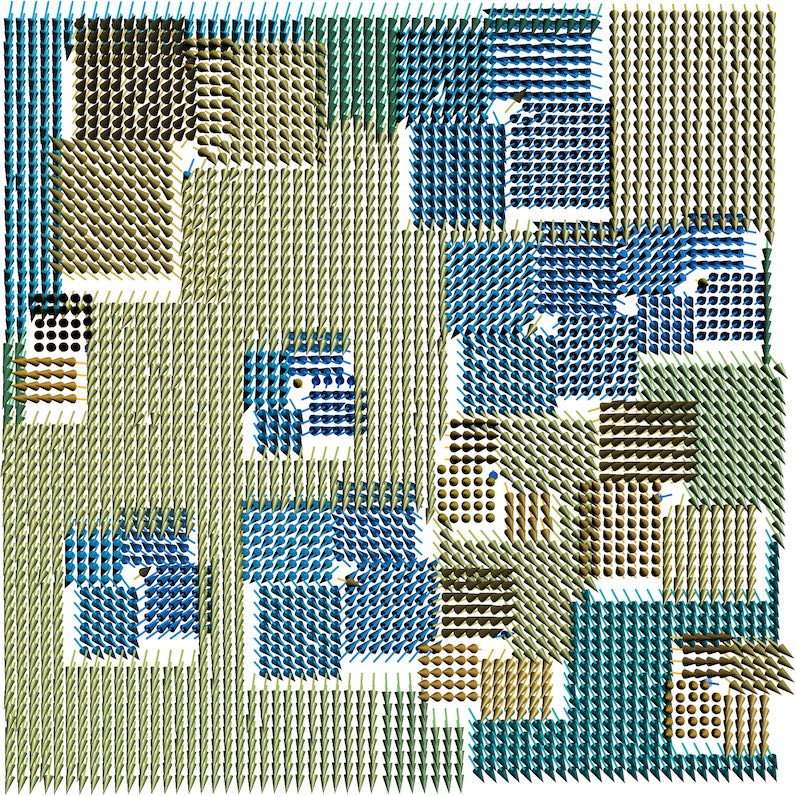}
  \end{subfigure}
  \begin{subfigure}{.235\textwidth}\centering
    \includegraphics[width=.98\textwidth]{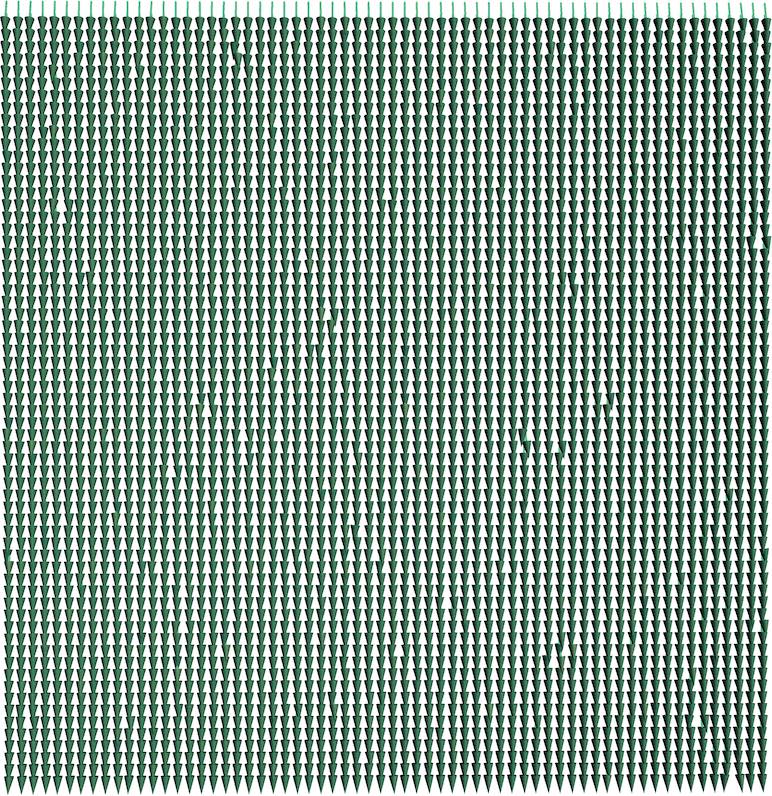}
  \end{subfigure}
  \caption{Evolutionary flow on an \(\mathbb S^2\)-valued image for
  different~\(p\in\{0.1,1,2\}\). For \(p=2\) both models are identical.}
  \label{fig:S2Whirl}
\end{figure}
We first take a look at a toy example of \(\mathbb S^2 \)-valued data by taking
the image~\lstinline!S2WhirlImage!\footnote{created by J.~Persch} shown in the
first column of Figure~\ref{fig:S2Whirl}.
In this picture a few squares consisting of \lq whirl\rq\ are depicted on a smoothly
varying background. For both kinds of whirls, one being on the northern
hemisphere clockwise (yellow) and the other on the southern hemisphere anti-clockwise (blue), the center point is the antipodal point, i.e., the south and north
pole, respectively. To look at the \(p\)-Laplace flow we set~\(\lambda=0\) in
the variational denoising models~\eqref{eq:anisomodel} and~\eqref{eq:isomodel}. 
We construct a local grid graph by connecting each pixel with his four direct neighbors.
We then employ the explicit schemes~\eqref{eq:iso-explicit}
and~\eqref{eq:aniso-explicit}, because the linearization introduced for the 
Jacobi iterations is not robust for small \(\lambda\) and is not valid
for \(\lambda=0\). We employ Neumann boundary conditions in the case of local neighborhoods.

In Figure~\ref{fig:S2Whirl} the evolution of solutions over time in the explicit scheme is shown for
the iterations \(i=0\) (starting conditions), \(i=1\,000\), \(i=10\,000\), and \(i=100\,000\). 
We set the time step size for all experiments of the explicit algorithm for \(p\in \{1,2\}\) 
to~\(\Delta t=10^{-3}\) and for \(p=0.1\) to~\(\Delta t=10^{-4}\). We further changed \(\varepsilon=10^{-4}\) for the last case to improve numerical stability. We observe
 that for all six experiments the algorithm numerically converges to a constant solution. 
For the Tikhonov case, \(p=2\), (first row) both the anisotropic and isotropic Laplace
flow coincide and the convergence to a constant image is slower compared to the
other rows, it reaches a constant solution after \(i=1\,000\,000\) iterations.
Due to the quadratic regularization term all edges are smoothed nicely, as can be best observed 
in iteration \(i=1\,000\) in the first row. In fact, at this point of the
iterations, the isotropic results are quite similar, comparing the first, second
and fourth row for \(p=2,1,0.1\), respectively. They are, however, not equal and tend to different constant solutions. 
The reason for this smooth appearance is due to the fact that we constructed a 
local graph consisting of four direct neighbors instead of 
using only two neighbors, as classically done in image processing.

On the other hand for the anisotropic TV case, \(p=1\), (third row) edges are
visibly preserved; see again iteration \(i=1\,000\). 
In this case the flow field yields a nearly piecewise constant regions in the
intermediate results.
Furthermore, the anisotropic TV flow first joins regions having only small
jumps (in geodesic distance) in between, see, e.g.,~the top left whirls at 
the bottom right corner after \(i=1\,000\) iterations.

Finally, for the anisotropic \(p=0.1\) case (last row) the intermediate results are even
 more piecewise constant compared to the anisotropic TV case, yielding a quite sparse
  gradient as one would expect for \(p<1\).

\paragraph{Nonlocal denoising of phase-valued data.}
\begin{figure}[tb]\centering
  \begin{subfigure}[t]{.33\textwidth}
    \includegraphics[width=.95\textwidth]{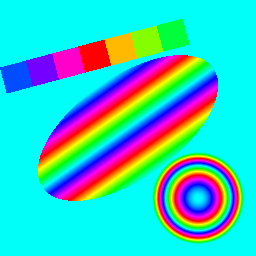}
    \caption[]{original phase image.}\label{subfig:S1:orig}
  \end{subfigure}
  \begin{subfigure}[t]{.33\textwidth}
    \includegraphics[width=.95\textwidth]{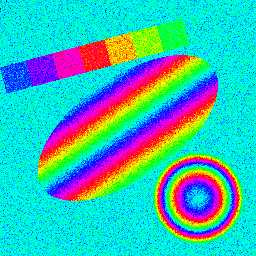}
    \caption[]{obstructed by noise,\\%
    $\sigma_{\text{n}}=0.3$, $\operatorname{MSE} = 0.0895$.}
    \label{subfig:S1:noisy}
  \end{subfigure}
  \begin{subfigure}[t]{.33\textwidth}
    \includegraphics[width=.95\textwidth]{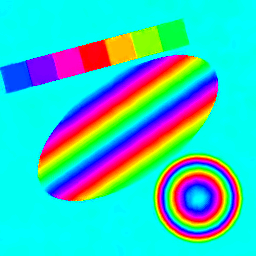}
    \caption[]{NL-MSSE approach~\cite{LPS16}
    \\$\operatorname{MSE}=0.00250$.}
    \label{subfig:S1:NLMSSE}
  \end{subfigure}
  \begin{subfigure}[t]{.33\textwidth}
    \includegraphics[width=.95\textwidth]{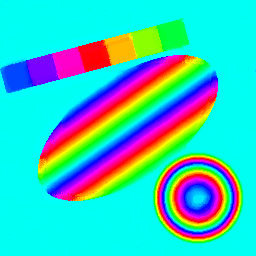}
    \caption[]{NL-TV approach,
    \(s=8\), \(\lambda = \tfrac{1}{256}\),\\ \(k=12\),
    $\operatorname{MSE} = 0.00267$.}
  \end{subfigure}
  \caption[]{Comparison of our nonlocal anisotropic TV (NL-TV) method to a state-of-the-art method using the
  NL-MSSE approach from~\cite{LPS16}.}
  \label{fig:S1}
\end{figure}
As a first data-driven denoising example let \(\mathcal M = \mathbb S^1\), i.e., we are
considering an image of size \(256\times 256\) pixels, where each pixel
of~\(f_0\in (\mathbb S^1)^{256\times 256}\) is a phase value living on the unit circle. This data occurs for
example in interferometric synthetic aperture radar (InSAR),
where height maps are generated from two parallel measurements with a laser of
a certain wavelength. Their inference often results in a noisy phase shift, i.e.,
a height map “modulo wavelength”. For an example of real-valued data see Figure~\ref{subfig:ExIntro:InSAR}. An artificial phase valued image was
introduced in~\cite{BLSW14}, see Figure~\ref{subfig:S1:orig})
and further used in~\cite{LPS16}. For our experiments we use
the same data, which has been perturbed by wrapped Gaussian noise with \(\sigma_{\text{n}}=0.3\) as shown in
Figure~\ref{subfig:S1:noisy}). The best result with respect to the MSE using second order statistics~\cite{LPS16}
is shown in Figure~\ref{subfig:S1:NLMSSE}).
Indeed, the \(k-\)nearest neighbor construction in~\cite{LPS16} is similar to ours as the authors
employ the same patch distance. However, our edge weight function is different. Based on the
\(k\)-nearest-neighbors for each vertex we set the weight function for the most
similar neighbor to~\(1\) and for the least similar neighbor to~\(0\) and
interpolate the weight function linearly in between for the other neighbors.
We optimized the involved parameters \(s,k\), and~\(\lambda\) with respect to
the MSE in the range from patch size \(s\in\{2,\ldots,7\}\),
\(\lambda\in\tfrac{1}{512}\{1,\ldots,8\}\), and~\(k\in\{5,\ldots,15\}\)
neighbors.
We employ our explicit scheme with step size~\(\Delta t=10^{-4}\).
As can be seen in Figure~\ref{fig:S1} our reconstruction result can compete with the best result from the state-of-the-art method
in~\cite{LPS16}. While the background and also small patches in the top
right corner in Figure~\ref{subfig:S1:NLMSSE}) suffer from non-constant reconstructions, our approach reconstructs constant regions
nearly perfectly due to the TV regularization. Only the the most left patch in blue suffers from fuzzy edges since there were not enough patches being similar
in the data.
Furthermore some small areas along the yellow part of the ellipsoidal
region seem also to suffer from too few similar patches, yielding a slightly worse MSE for our reconstruction.

\paragraph{Denoising surface normals.}
\begin{figure}[tbp]\centering
  \begin{subfigure}[t]{.31\textwidth}\centering
    \includegraphics[height=.98\textwidth]{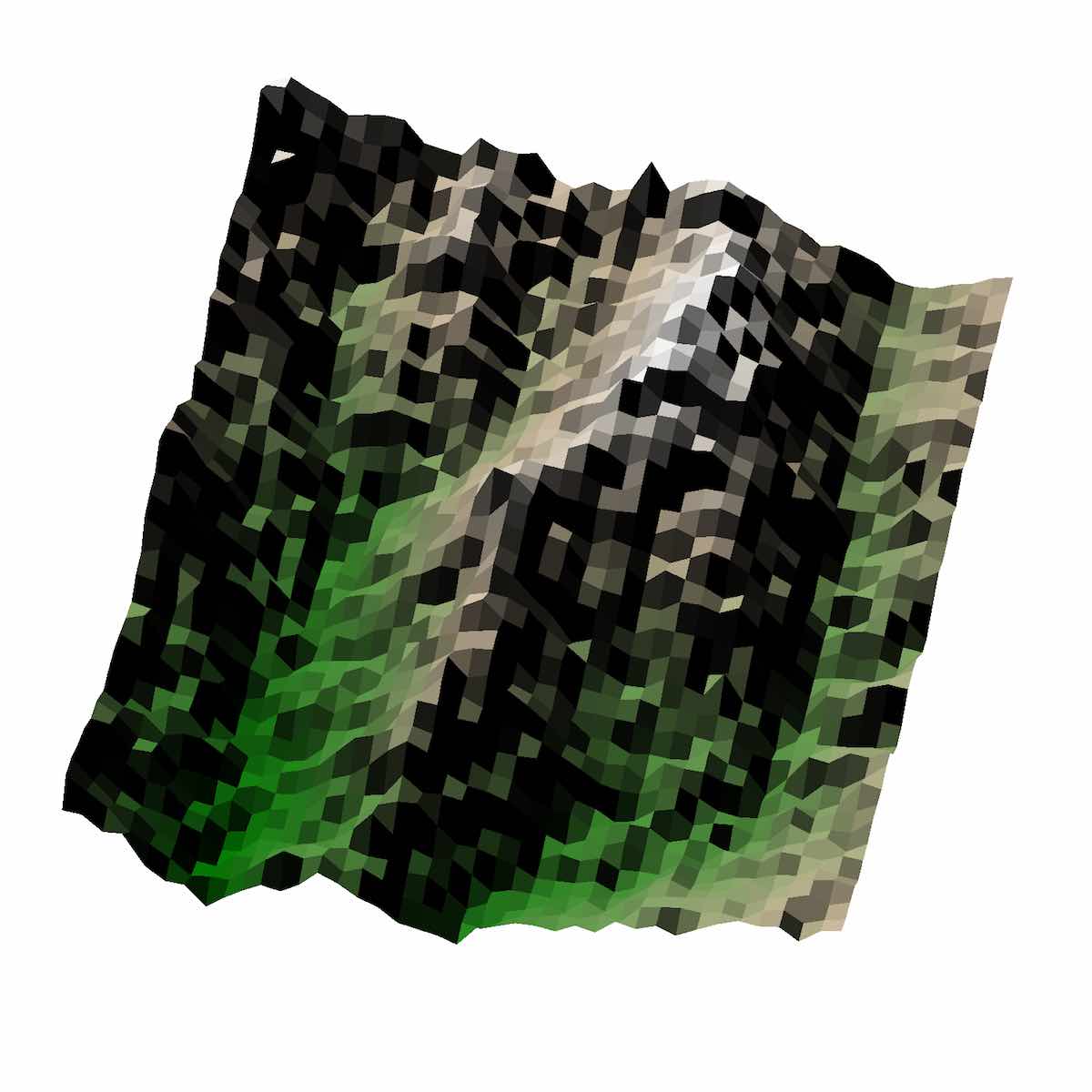}\\[-\baselineskip]
    \includegraphics[height=.825\textwidth]{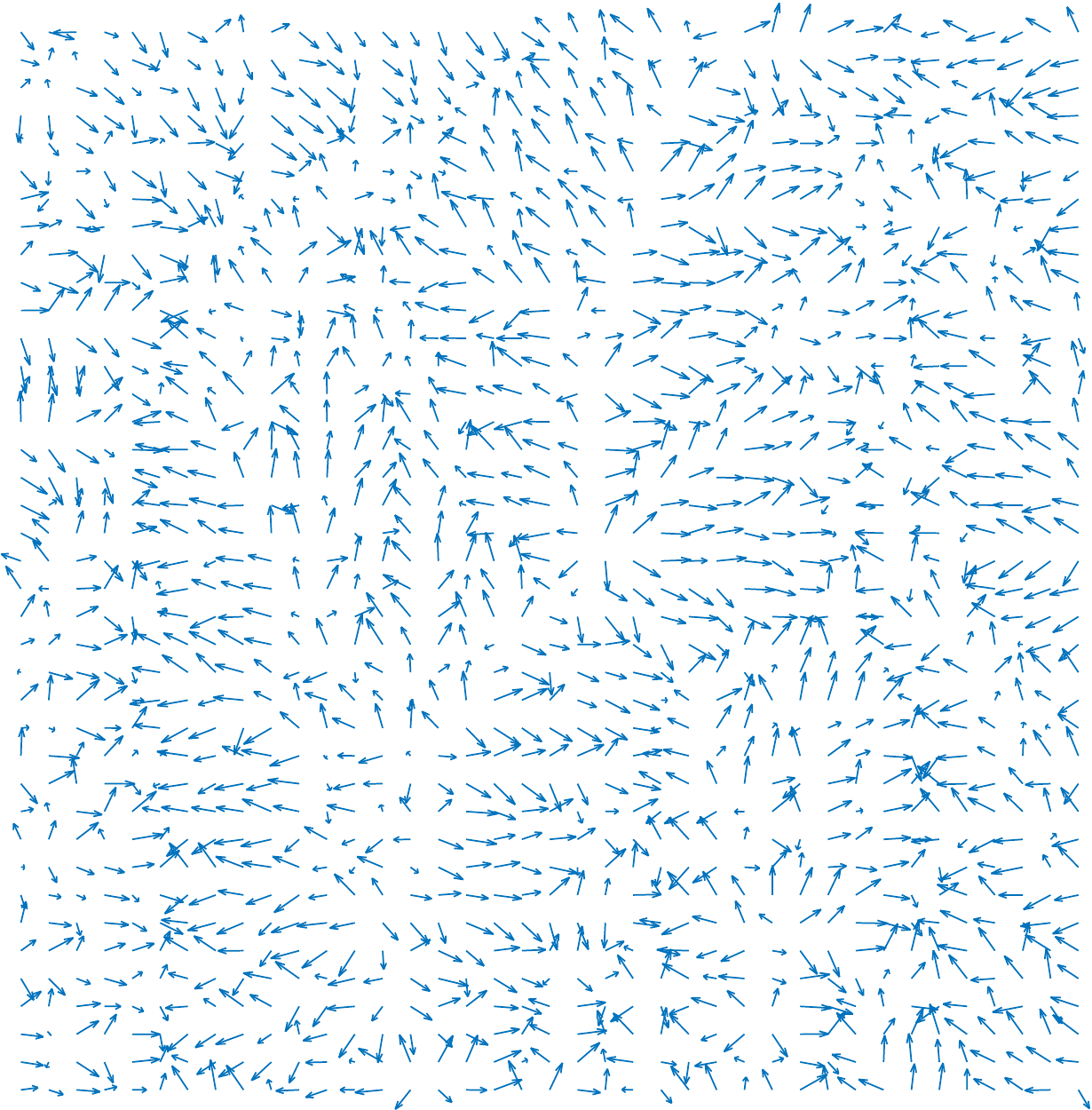}
    \caption{Original NED data.}
    \label{subfig:NED:orig}
  \end{subfigure}
    \begin{subfigure}[t]{.31\textwidth}\centering
    \includegraphics[height=.98\textwidth]{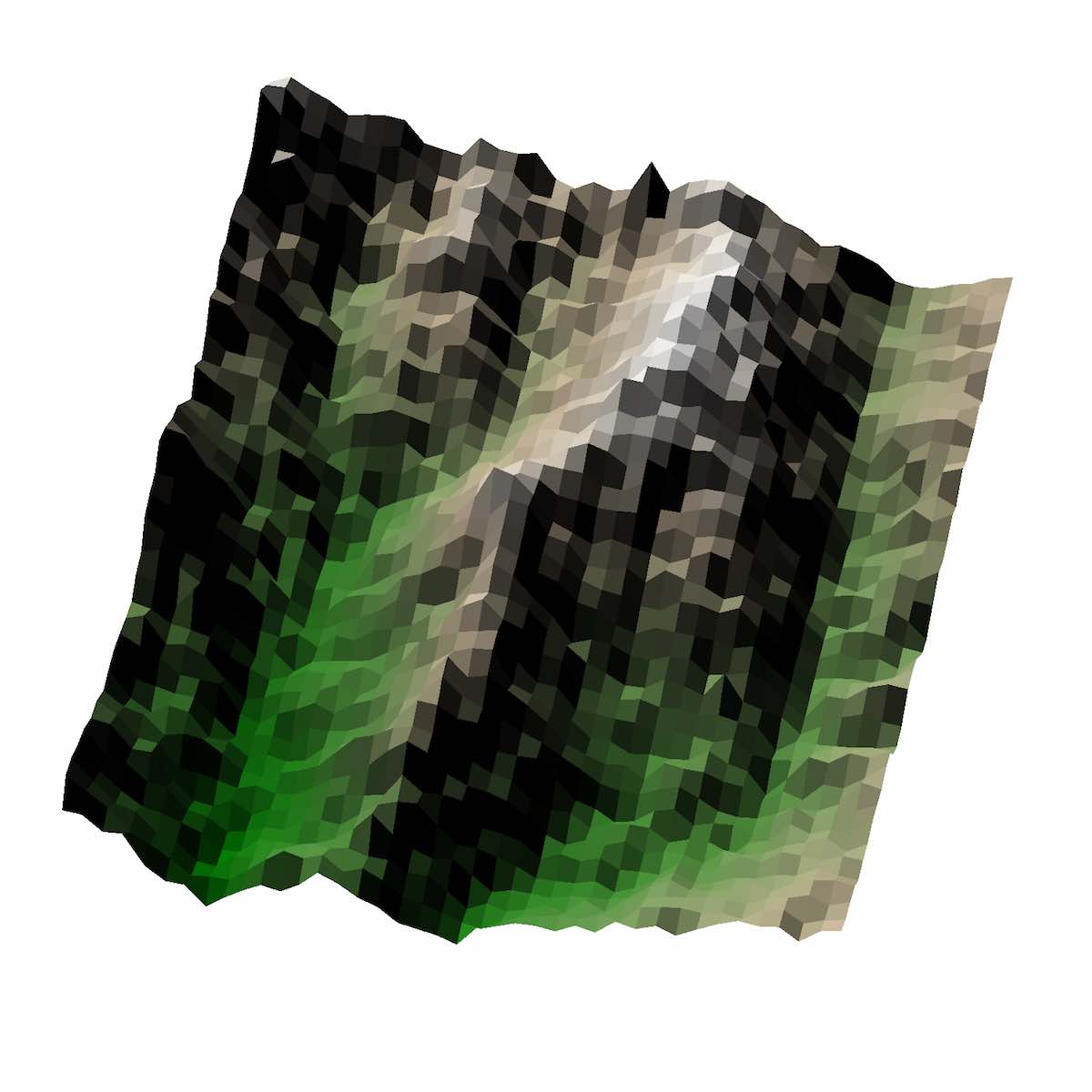}\\[-\baselineskip]
    \includegraphics[height=.825\textwidth]{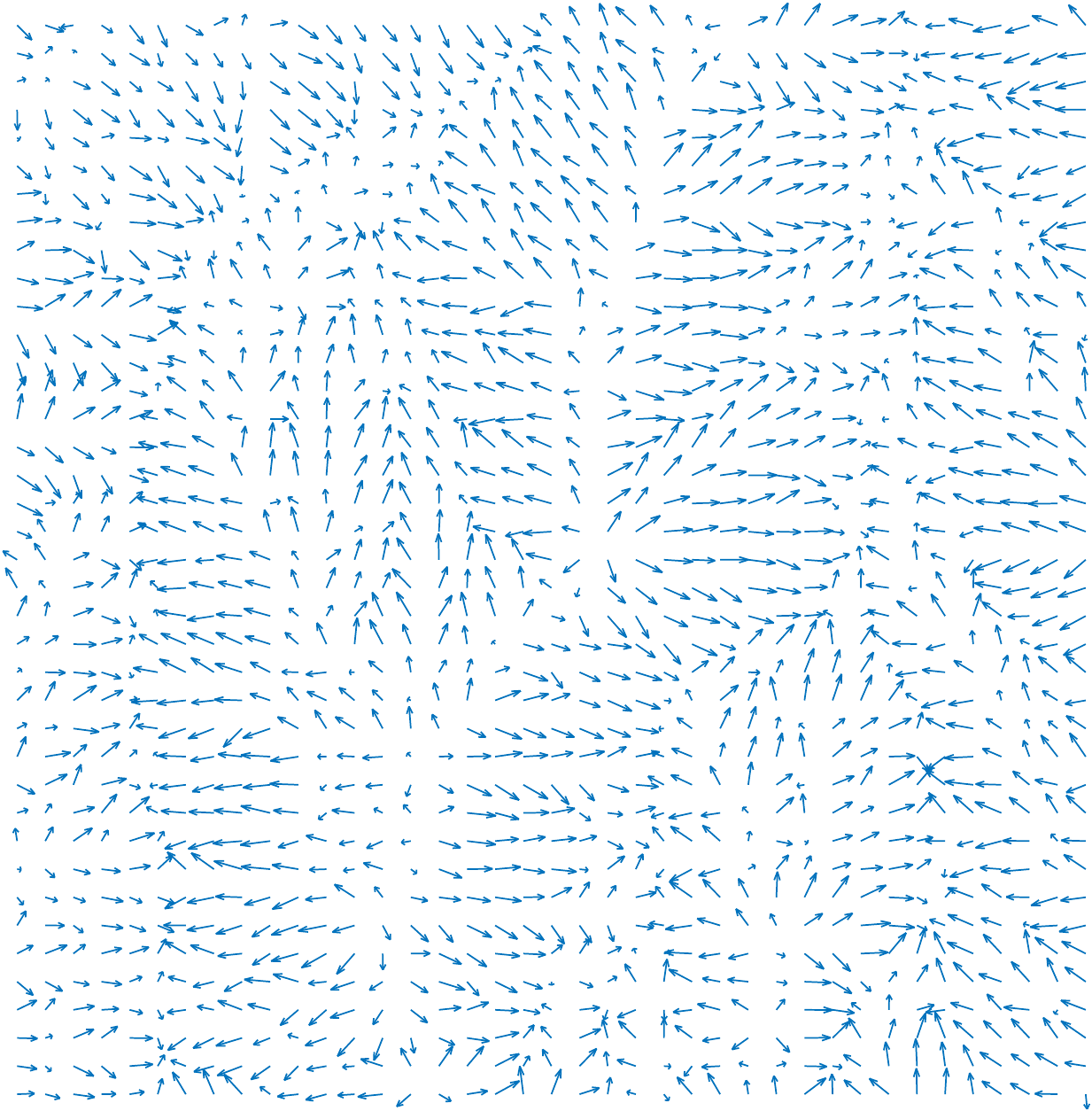}
    \caption{Reconstruction for $p=2$ and $\lambda=5$.}
    \label{subfig:NED:rec_p2_l5}
  \end{subfigure}
  \begin{subfigure}[t]{.31\textwidth}\centering
    \includegraphics[height=.98\textwidth]{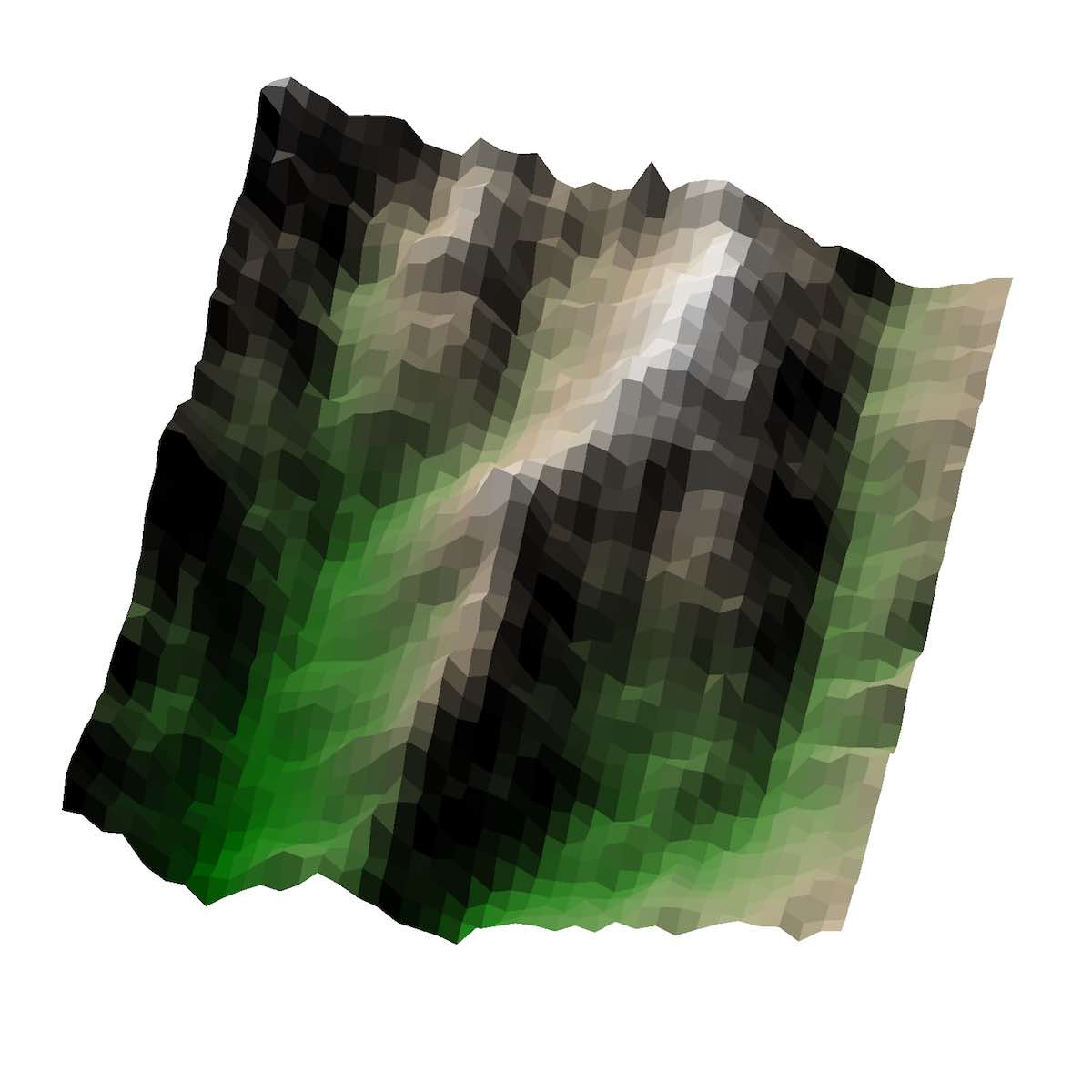}\\[-\baselineskip]
    \includegraphics[height=.825\textwidth]{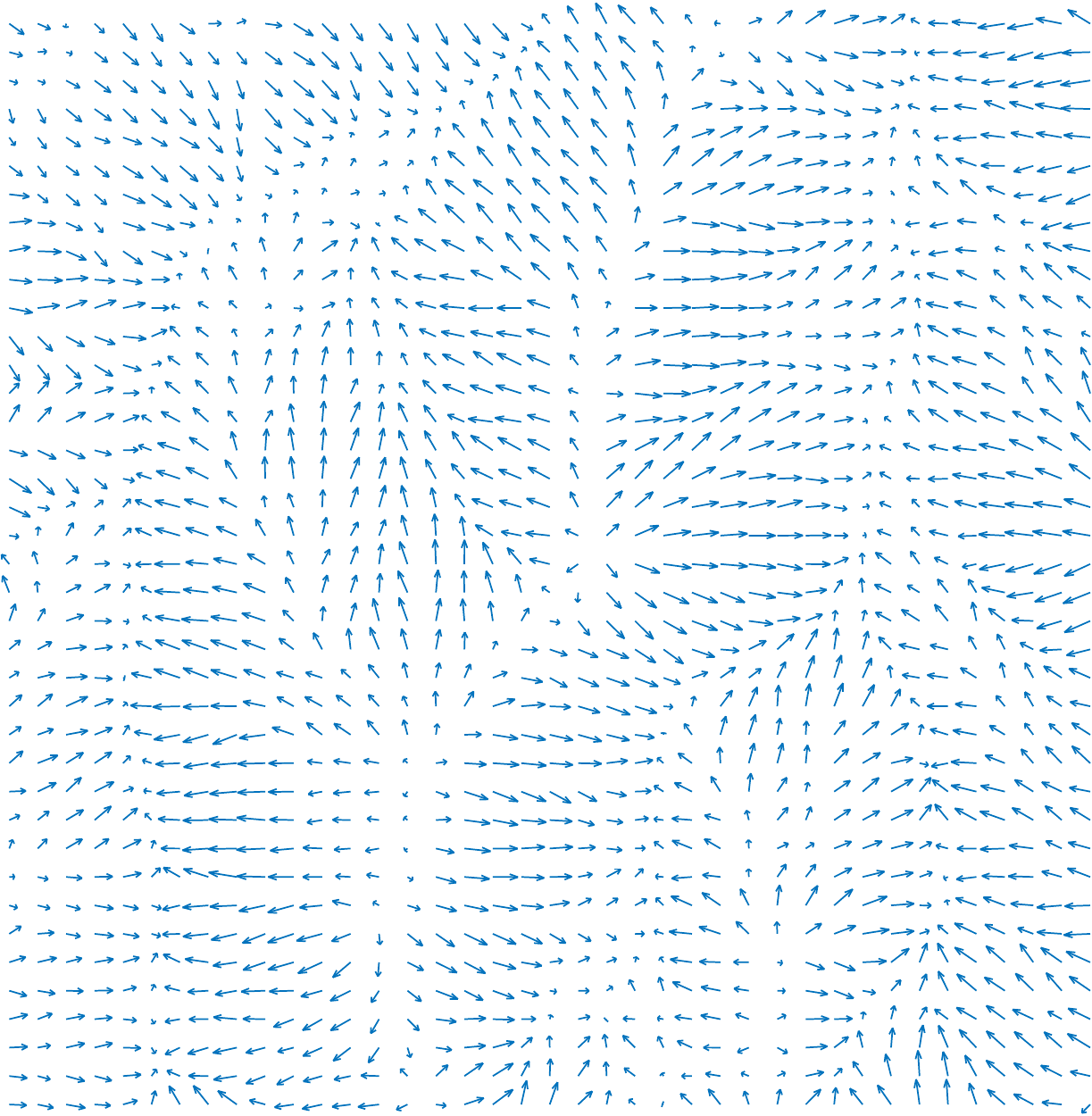}
    \caption{Reconstruction for $p=2$ and $\lambda=0.5$.}
    \label{subfig:NED:rec_p2_l05}
  \end{subfigure}\\
    \begin{subfigure}[t]{.31\textwidth}\centering
    \includegraphics[height=.98\textwidth]{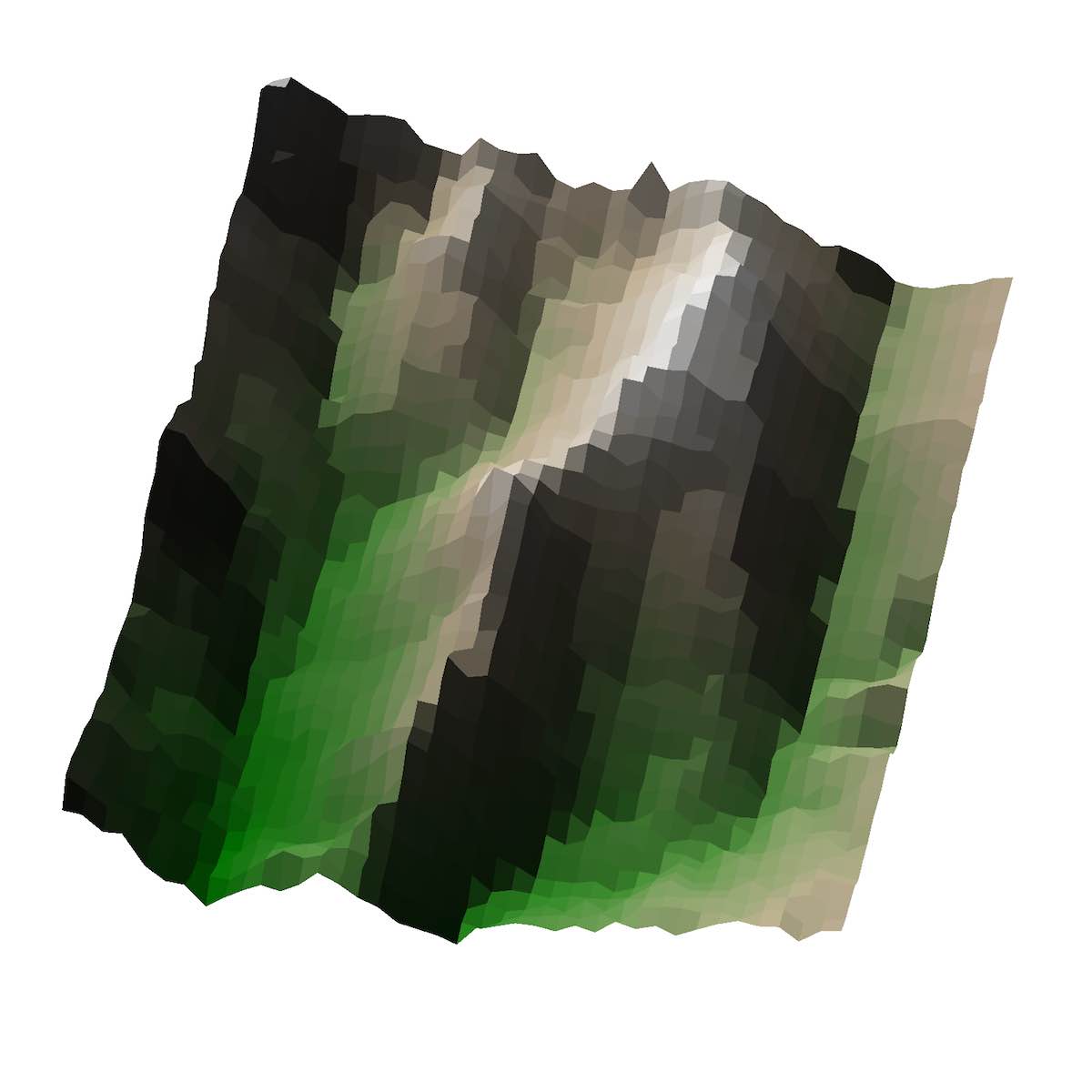}\\[-\baselineskip]
    \includegraphics[height=.825\textwidth]{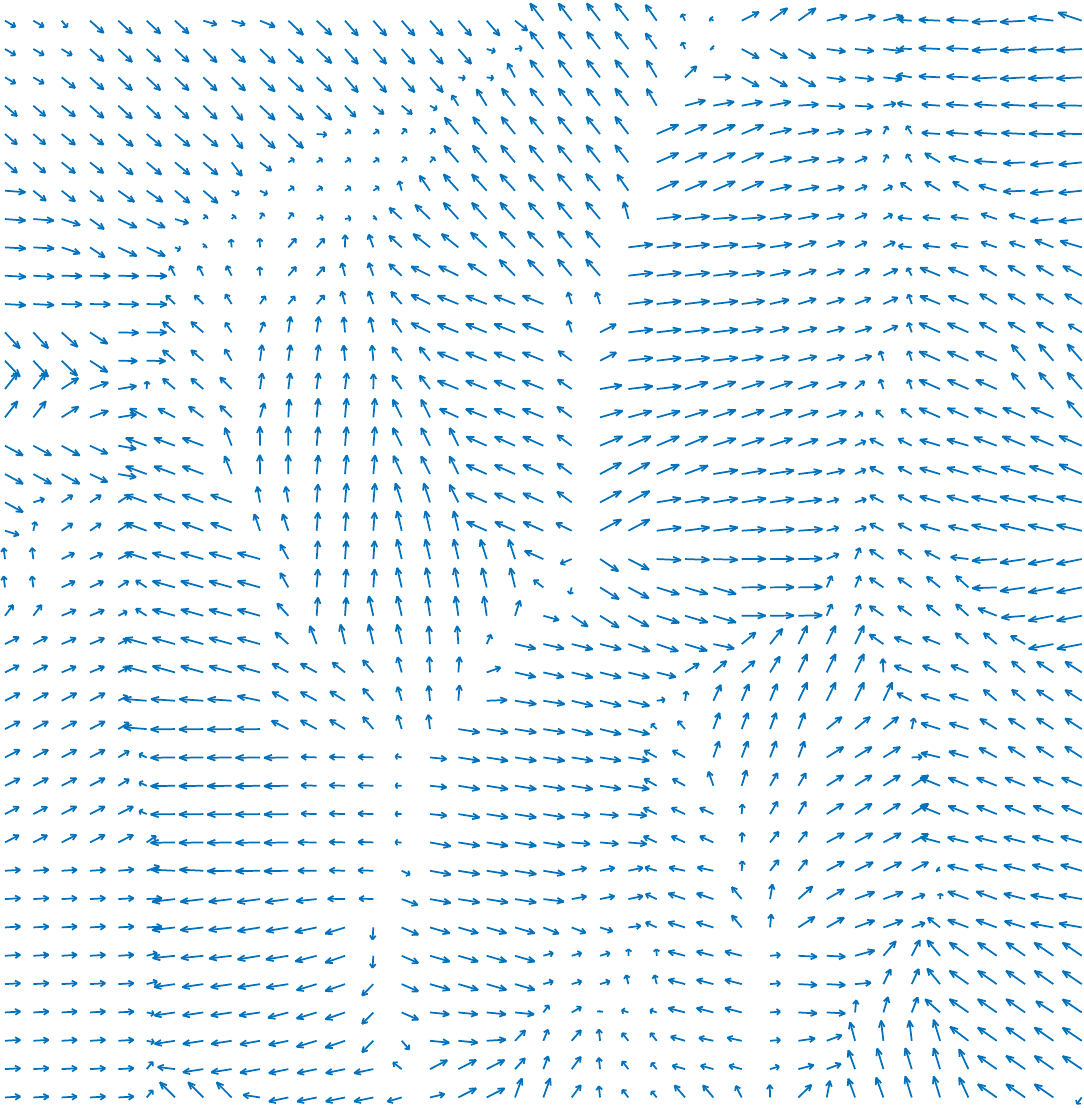}
    \caption{Reconstruction for $p=1$ (anisotropic) and $\lambda=2$.}
    \label{subfig:NED:rec_p1_l2_aniso}
  \end{subfigure}
  \begin{subfigure}[t]{.31\textwidth}\centering
    \includegraphics[height=.98\textwidth]{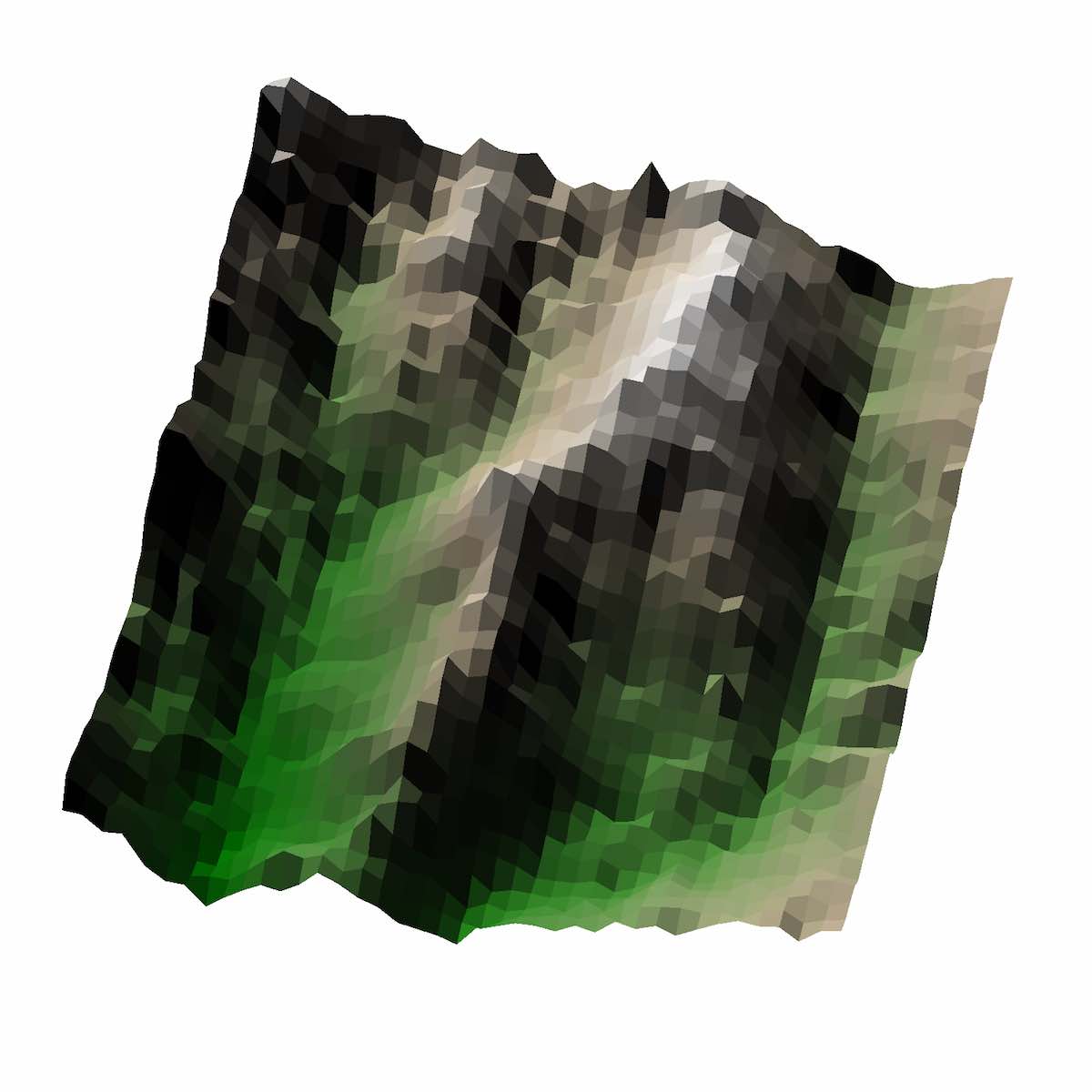}\\[-\baselineskip]
    \includegraphics[height=.825\textwidth]{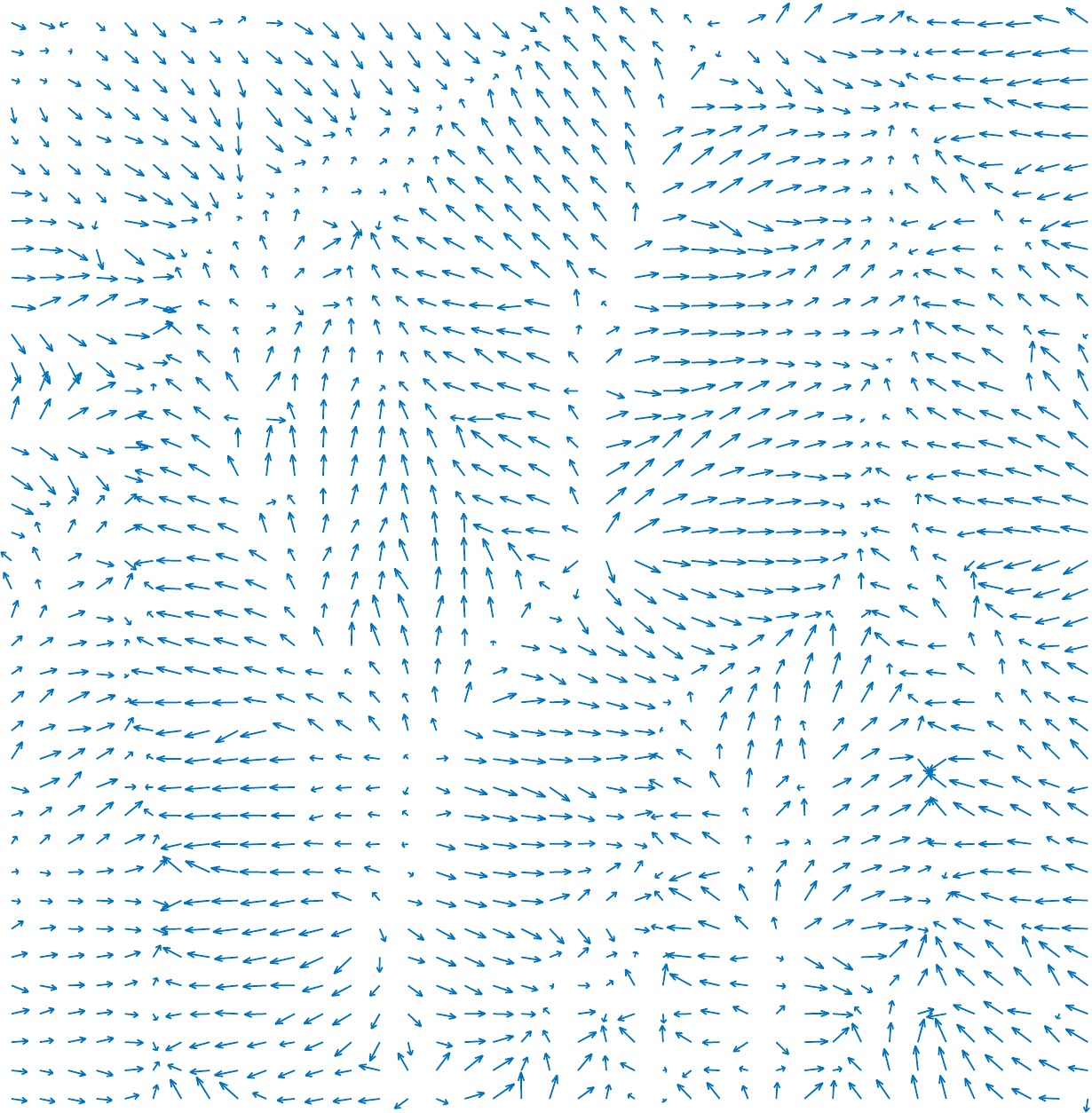}
    \caption{Reconstruction for $p=1$ (isotropic) and $\lambda=2$.}
    \label{subfig:NED:rec_p1_l2_iso}
  \end{subfigure}
  \begin{subfigure}[t]{.31\textwidth}\centering
    \includegraphics[height=.98\textwidth]{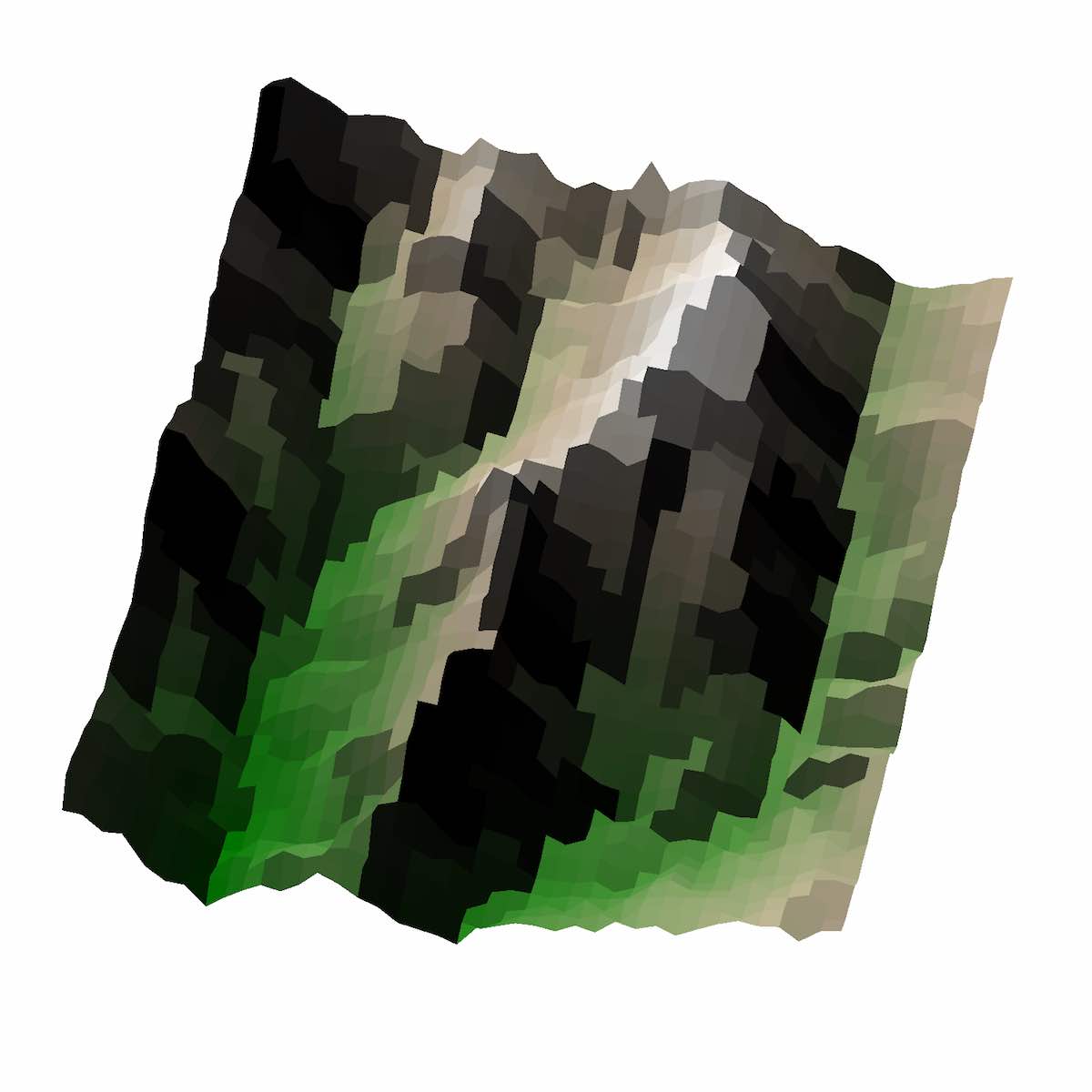}\\[-\baselineskip]
    \includegraphics[height=.825\textwidth]{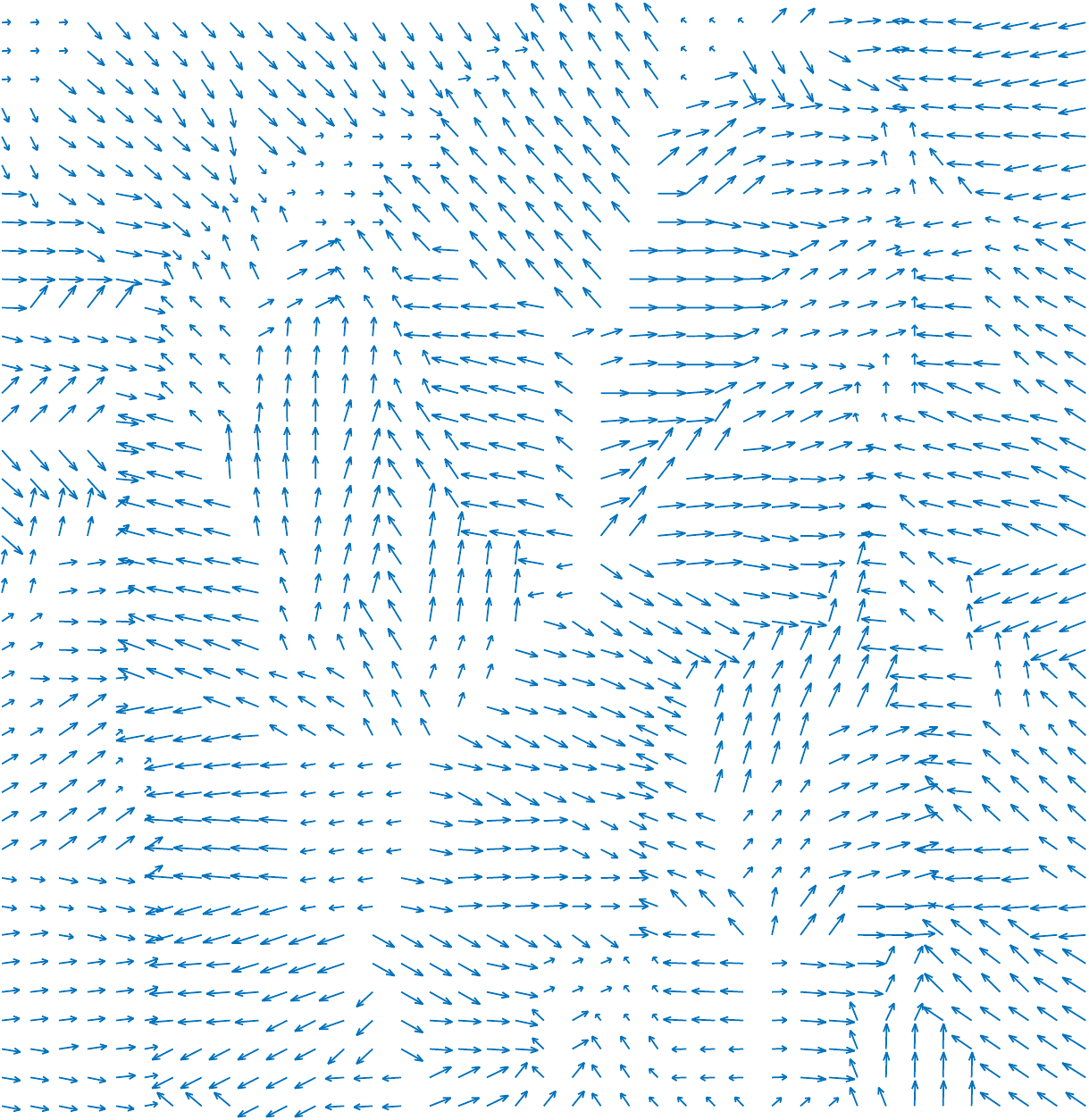}
    \caption{Reconstruction for $p=0.1$ (anisotropic) and $\lambda=1$.}
    \label{subfig:NED:rec_p05_l1_aniso}
  \end{subfigure}
  
  \caption{Reconstruction results of measured surface normals in a DEM and the resulting surface visualization for different denoising parameter settings.}
\end{figure}
Next, we discuss a real world application based on topological surface data from
the National Elevation Dataset (NED)~\cite{GEMHC09}, which was already used for
manifold-valued data processing in~\cite{LMS13,LSKC13} with different denoising
approaches, e.g., lifting for the TV-based regularization approach proposed
in~\cite{LSKC13}.
The provided digital elevation maps (DEM) are generated by light detection and ranging (LiDAR) measurements of earth's surface topology. 
The given DEMs consist of measured surface heights and surface patch normals
corresponding to the heights. The resolution of the image is quite low and hence
one height and surface normal pair is a measurement from a relatively large area of the surface. The normals are usually used to color
the data correctly and thus enhance the visual 3D perception of it.
The original data often gives a perturbed visual impression (see
Figure~\ref{subfig:NED:orig})) due to measurement noise and the complexity of
the underlying real surface, which is only sparsely sampled during data
acquisition. The provided data in the following has a size of
only~\(40 \times 40\) pixels, while covering a large area of a mountain
formation. The processing task is now to reconstruct the measured surface
normals, which live on the manifold \(\mathbb{S}^2\), such that the visual
impression of the DEM is enhanced, while preserving important topological
features, e.g., kinks, and non-smooth regions with high curvature.

As can be seen in Figures~\ref{subfig:NED:rec_p2_l5})
and~\ref{subfig:NED:rec_p2_l05}) using the regular graph Laplacian, i.\,e.~\(p=2\), and the explicit scheme with \(\Delta t= 10^{-4}\), effectively
reduces the noise of the surface normals and thus improves the impression of
the respective visualizations. However, for \(p=2\) one is not able to preserve
the predominant kink of the mountain ridge. For this reason, we demonstrate in
Figures~\ref{subfig:NED:rec_p1_l2_aniso}) and~\ref{subfig:NED:rec_p1_l2_iso})
the effect of reconstructing the DEMs with a total variation-based
regularization
functional for \(p=1\) in the cases of the anisotropic and isotropic graph
\(p\)-Laplacian, respectively. While the top of the mountain ridge is now
well-preserved the resulting normals of the anisotropic TV regularization are
nearly piecewise constant, which leads to a rather blocky visual impression in
Figure~\ref{subfig:NED:rec_p1_l2_aniso}). The isotropic TV regularization in
Figure~\ref{subfig:NED:rec_p1_l2_iso}) on the other hand succeeds in preserving
geological details of the surface, while smoothing the surface normals for an
improved visual impression. As a final comparison we show the results of
non-convex regularization for \(p=\tfrac{1}{2}\) in the anisotropic
graph~\(p\)-Laplacian. It can be observed that the resulting surface normals
field has only very few directions and hence is relatively sparse.
This leads to the visual impression of very sharp edged in
Figure~\ref{subfig:NED:rec_p05_l1_aniso}).
\subsection{Diffusion tensors on the sphere}
\label{ss:applications_sphere}
\begin{figure}
  \begin{subfigure}[t]{.24\textwidth}
    \includegraphics[width=\textwidth]%
      {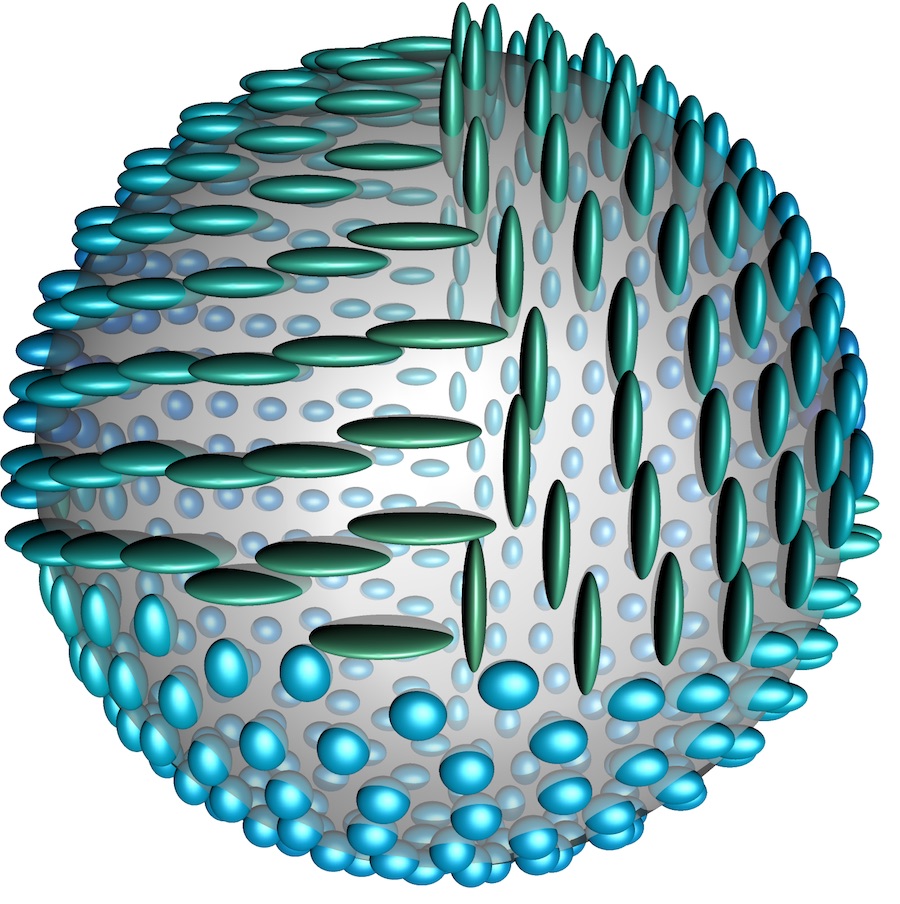}
      \caption{original data}
      \label{subfig:S2SPD:orig}
  \end{subfigure}
  \begin{subfigure}[t]{.24\textwidth}
    \includegraphics[width=\textwidth]%
      {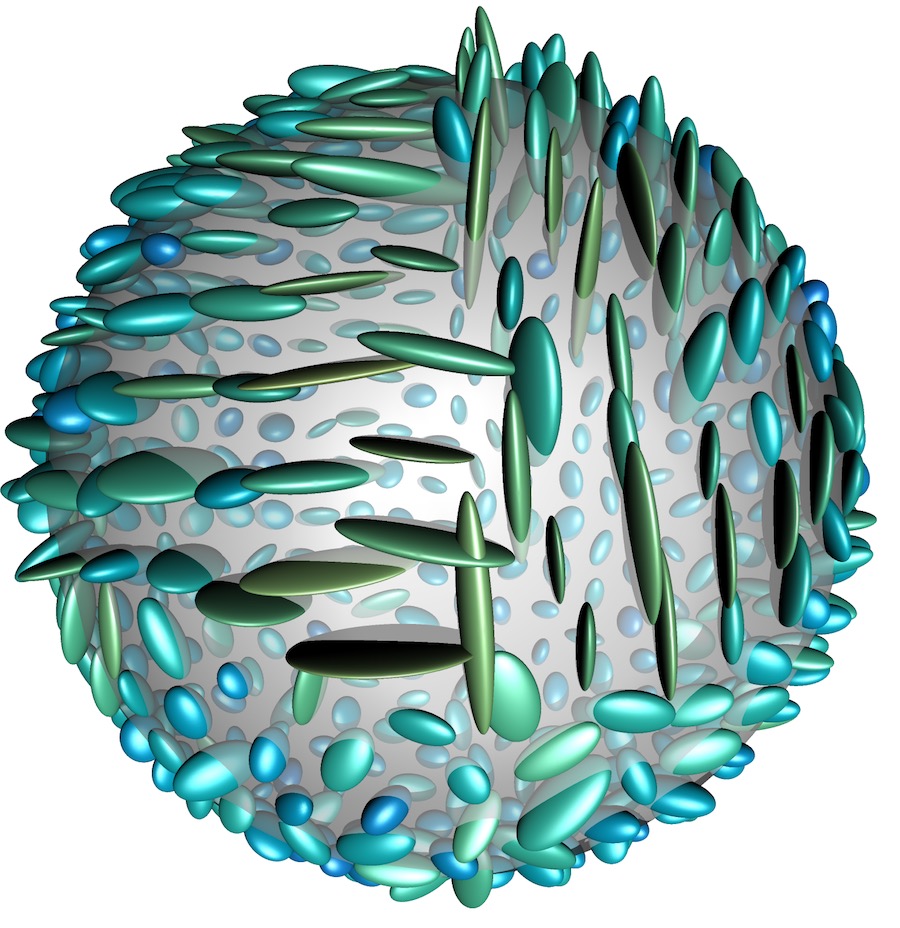}
      \caption{noisy data, Gaussian noise,
      \(\sigma=\frac{1}{4}\); \(\operatorname{MSE} =0.3755\).}
      \label{subfig:S2SPD:noisy}
  \end{subfigure}
  \begin{subfigure}[t]{.24\textwidth}
    \includegraphics[width=\textwidth]%
      {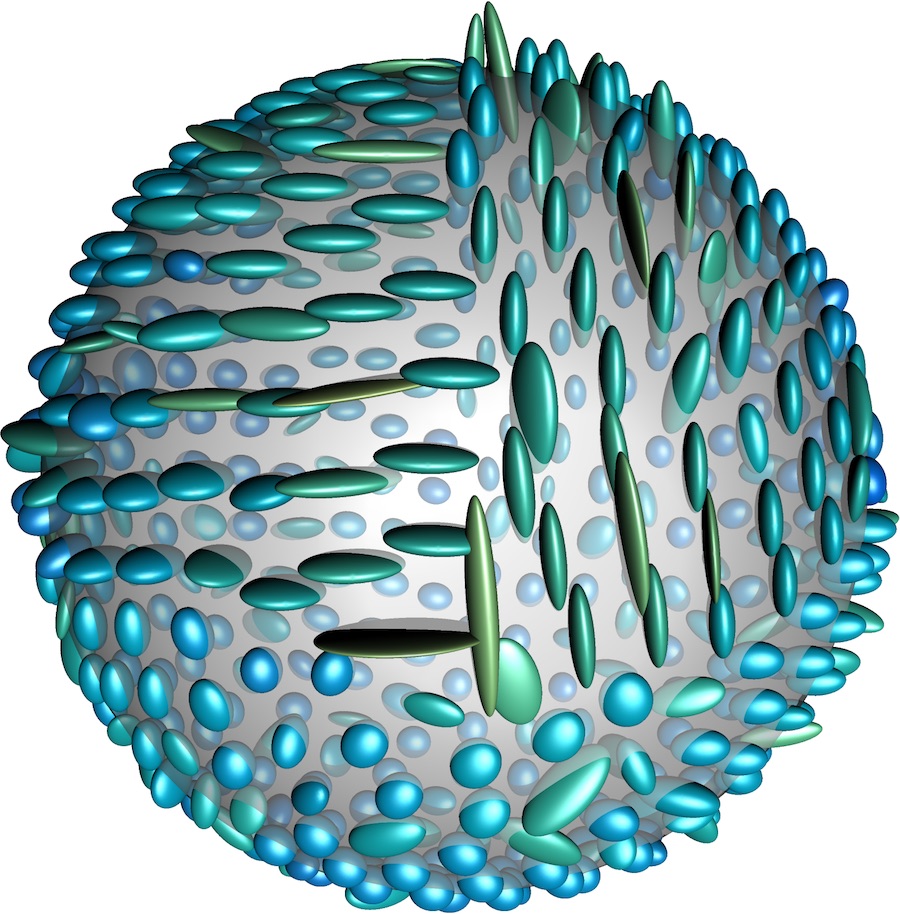}
      \caption{reconstruction,\\\(p=1\),
      \(\lambda=10\); \(\operatorname{MSE} =0.1776\).}
    \label{subfig:S2SPD:p1}
  \end{subfigure}
  \begin{subfigure}[t]{.24\textwidth}
    \includegraphics[width= \textwidth]%
      {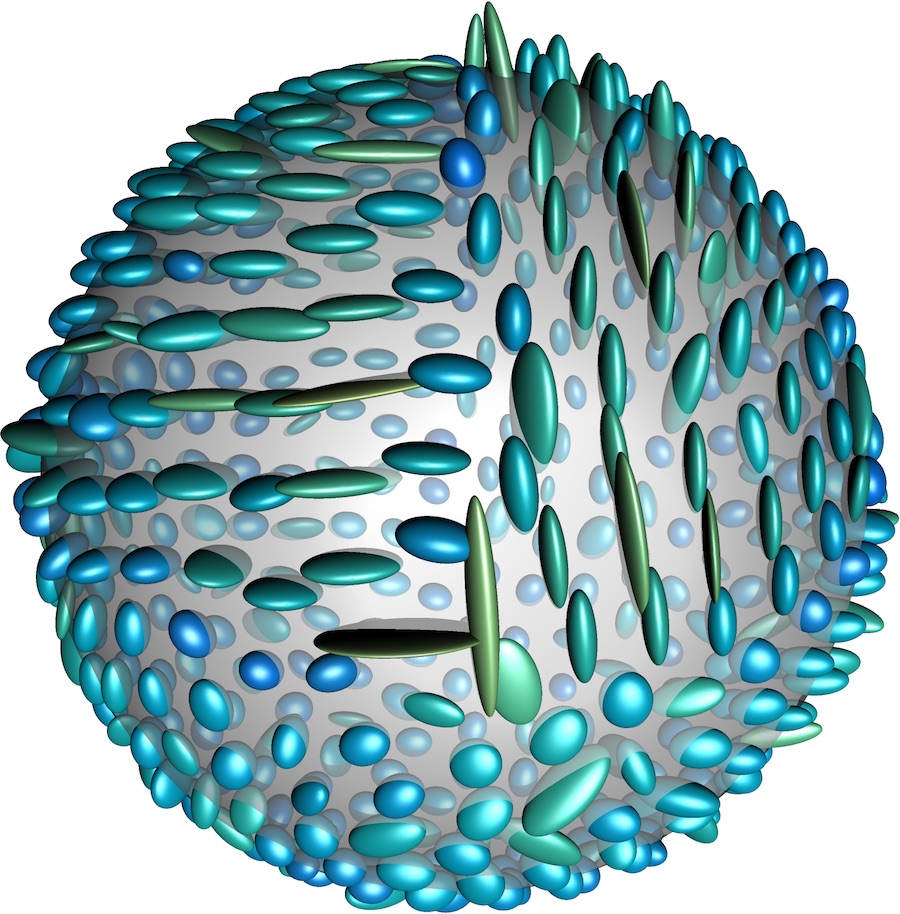}
      \caption{reconstruction,\\\(p=2\), \(\lambda=80\);
      \(\operatorname{MSE} =0.2073\).}
    \label{subfig:S2SPD:p2}
  \end{subfigure}
  \caption{The anisotropic graph \(p\)-Laplacian, \(p=1,2\) for diffusion
  tensors measured equally distributed on a sphere:
  The original~(\subref{subfig:S2SPD:orig}) is obstructed by
  Gaussian~(\subref{subfig:S2SPD:noisy}).
  The denoising using the \(p\)-Laplace for \(p=1\) keeps the edges,
  while the case \(p=2\) approach reduces the MSE, it also smoothes the edges.}
  \label{fig:S2SPD}
\end{figure}
To process manifold-valued data defined on a manifold itself, we investigate
diffusion tensors defined on the unit sphere \(\mathbb S^2\). For sampling we use 
the quadrature points of Chebyshev-type, which were computed in~\cite[Example 6.19]{Graef13}
corresponding to polynomial degree \(30\), i.e., on \(M=480\) points.
These are available on the homepage of the software package accompanying the
thesis\footnote{see~\href{http://homepage.univie.ac.at/manuel.graef/quadrature.php}%
{homepage.univie.ac.at/manuel.graef/quadrature.php}}~\cite{Graef13}.
We assign to each sampling point a symmetric positive definite matrix,
which yields the original (unperturbed) dataset shown in
Figure~\ref{subfig:S2SPD:orig}). The illustrated ellipsoids on the sphere are colored with
respect to the geodesic anisotropy, cf.~\cite[p.~290]{MB06}, employing the
colormap~\lstinline!viridis!.
This artificial data set is then perturbed
by i.i.d.~Gaussian noise, see e.g.~\cite{LPS16}, with standard
deviation~\(\sigma=\frac{1}{4}\), which yields the noisy data in
Fig.~\ref{subfig:S2SPD:noisy}).
Each of the sampling points on the sphere
\(p_i\in\mathbb S^2\), \(i=1,\ldots,480\), constitutes one vertex~\(v_i\in V\)
of the finite weighted graph \(G=(V,E,w)\). We construct an $\varepsilon-$ball graph by 
connecting to vertices \(v_i,v_j \in V\) by an edge~\((v_i,v_j)\in E\)
if \(d_{\mathbb S^2}(p_i,p_j) \leq \varepsilon\) with \( \varepsilon = \frac{\pi}{12}\) for this
experiment. This results in \(3648\) edges, i.e., each node has in average~\(7.6\)
neighbors.
The edges are weighted with the arc length distance on~\(\mathbb S^2\) of their
incident nodes, i.e., \(w(i,j) = d_{\mathbb S^2}(p_i,p_j)^{-2}\).

We then employ the aforementioned Jacobi iteration methods with~\(N=10,000\)
iterations to compute a denoised reconstruction of the noisy data using the
anisotropic graph \(p-\)Laplace for~\(p=1\) and~\(p=2\) with parameters 
\(\lambda=23\) and \(\lambda=80\), respectively.
These parameters where chosen among all
integers~\(\lambda\in\mathbb Z\cap\{1,\ldots,100\}\) such that the MSE is minimized.
The results of this approach are shown in Figure~\ref{subfig:S2SPD:p1})
and~\subref{subfig:S2SPD:p2}).
While the TV regularization for \(p=1\) preserves edges,
the Tikhonov regularization for \(p=2\) introduces smooth reconstructions,
which leads to higher values of the MSE.

\subsection{Diffusion tensor imaging}
\label{s:DT-MRI}
A special case of data taking values on the Riemannian
manifold~\(\mathcal M = \mathcal P(3)\) of symmetric positive
definite~\(3\times3\)-matrices are diffusion tensors as obtained in diffusion
tensor magnetic resonance imaging (DT-MRI).
For this experiment we use the open data set from the Camino
project\footnote{see \url{http://camino.cs.ucl.ac.uk}}~\cite{Camino}.
The provided data \(f_0\in (\mathcal P(3))^{112\times112\times50}\) of a human brain
consists of~\(50\) transversal slices, each contained in an array of
size~\(112\times112\). Values outside the brain are masked and take zero-matrices as
values.

\begin{figure}[tbp]\centering
  \begin{subfigure}{.470\textwidth}\centering
    \includegraphics[width=.49\textwidth]{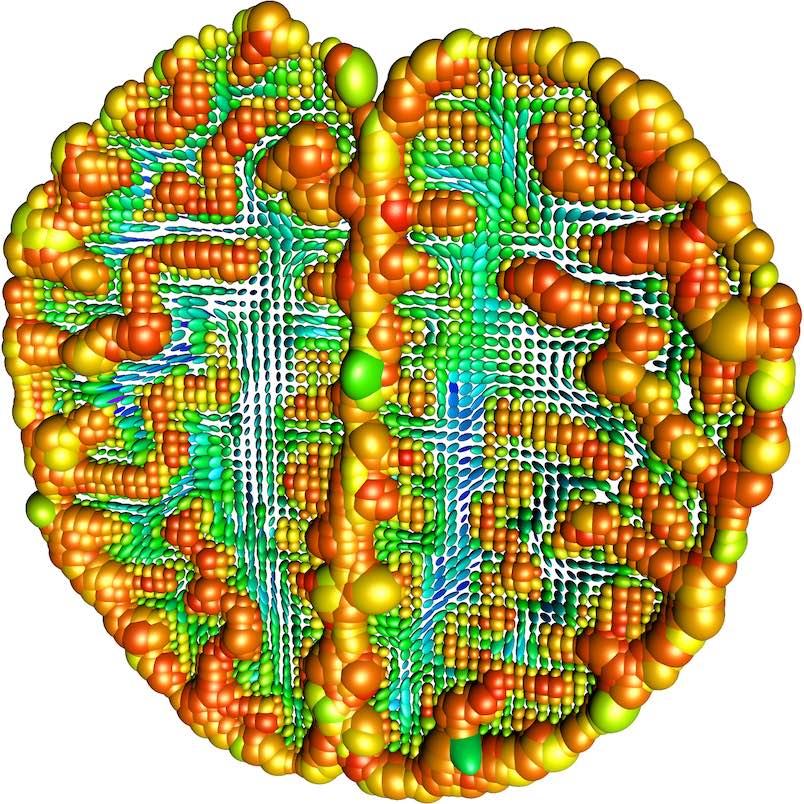}
    \caption{Camino slice \(z=35\).}
    \label{subfig:CaminoNL:orig}
  \end{subfigure}
  \\
  \begin{subfigure}{.03\textwidth}\centering
    \rotatebox{90}{\small\(k=10\), anisotropic}
  \end{subfigure}
  \begin{subfigure}{.235\textwidth}\centering
\includegraphics[width=.98\textwidth]{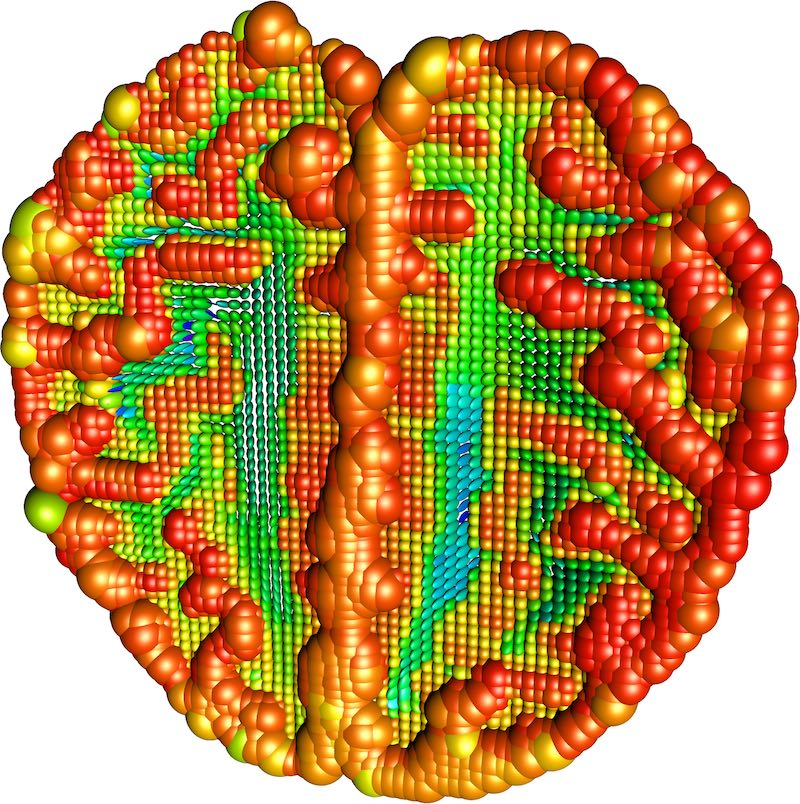}
    \caption{\(s=1\), \(\lambda=10\).}\label{subfig:CaminoNL:a-s1l10}
  \end{subfigure}
  \begin{subfigure}{.235\textwidth}\centering
\includegraphics[width=.98\textwidth]{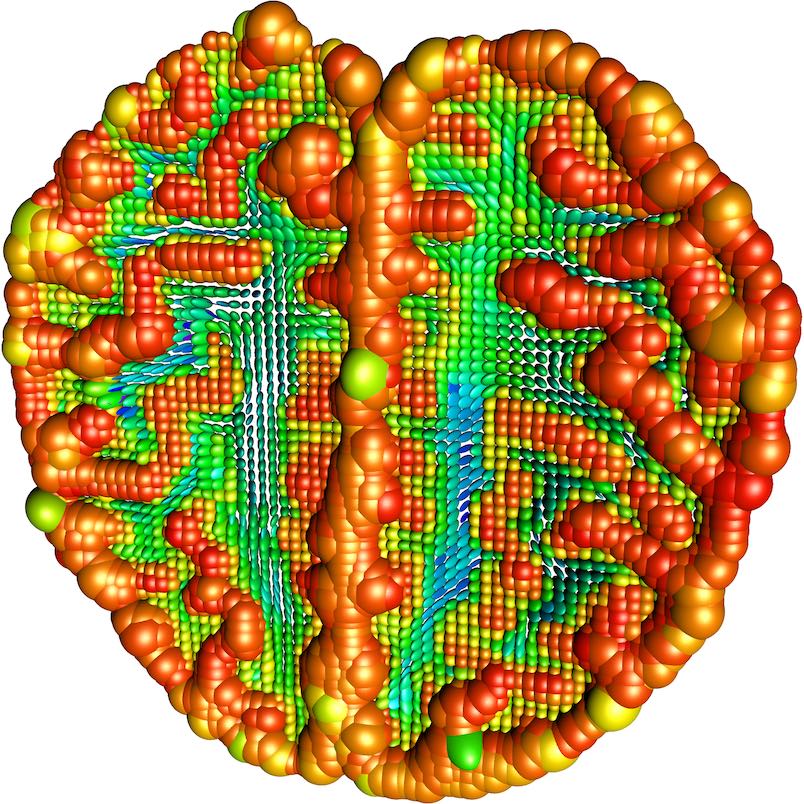}
    \caption{\(s=1\), \(\lambda=25\).}\label{subfig:CaminoNL:a-s1l25}
  \end{subfigure}
  \begin{subfigure}{.235\textwidth}\centering
\includegraphics[width=.98\textwidth]{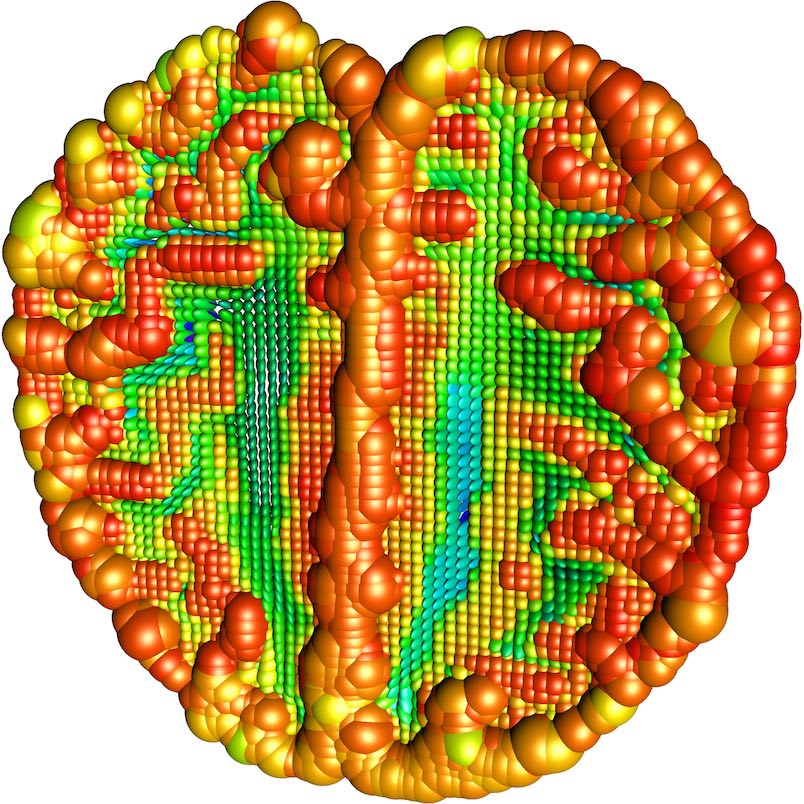}
    \caption{\(s=2\), \(\lambda=10\).}\label{subfig:CaminoNL:a-s2l10}
  \end{subfigure}
  \begin{subfigure}{.235\textwidth}\centering
\includegraphics[width=.98\textwidth]{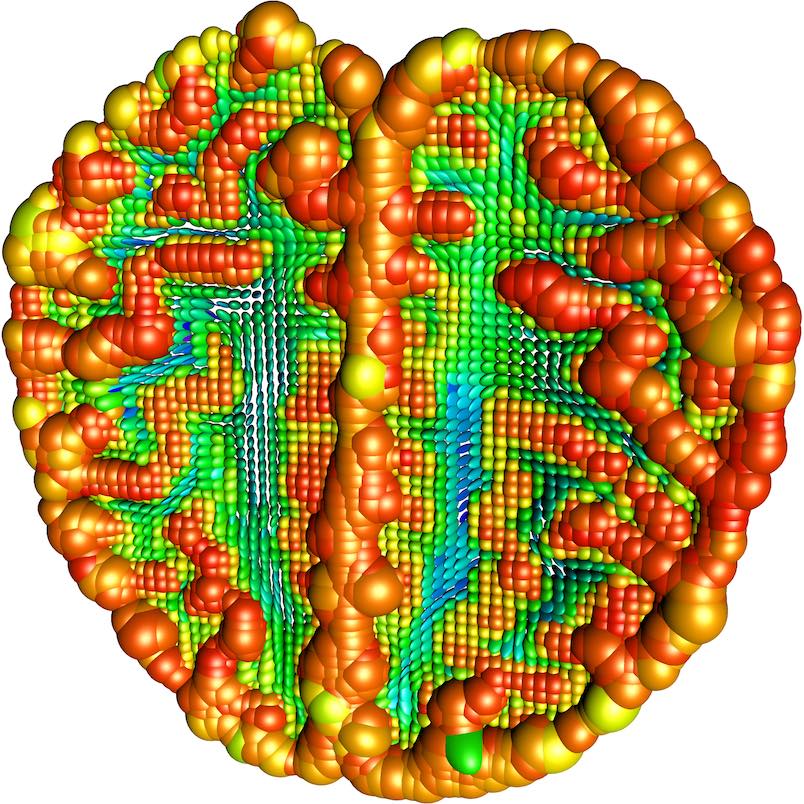}
    \caption{\(s=2\), \(\lambda=25\).}\label{subfig:CaminoNL:a-s2l25}
  \end{subfigure}
  \\
  \begin{subfigure}{.03\textwidth}\centering
    \rotatebox{90}{\small\(k=10\), isotropic}
  \end{subfigure}
  \begin{subfigure}{.235\textwidth}\centering
\includegraphics[width=.98\textwidth]{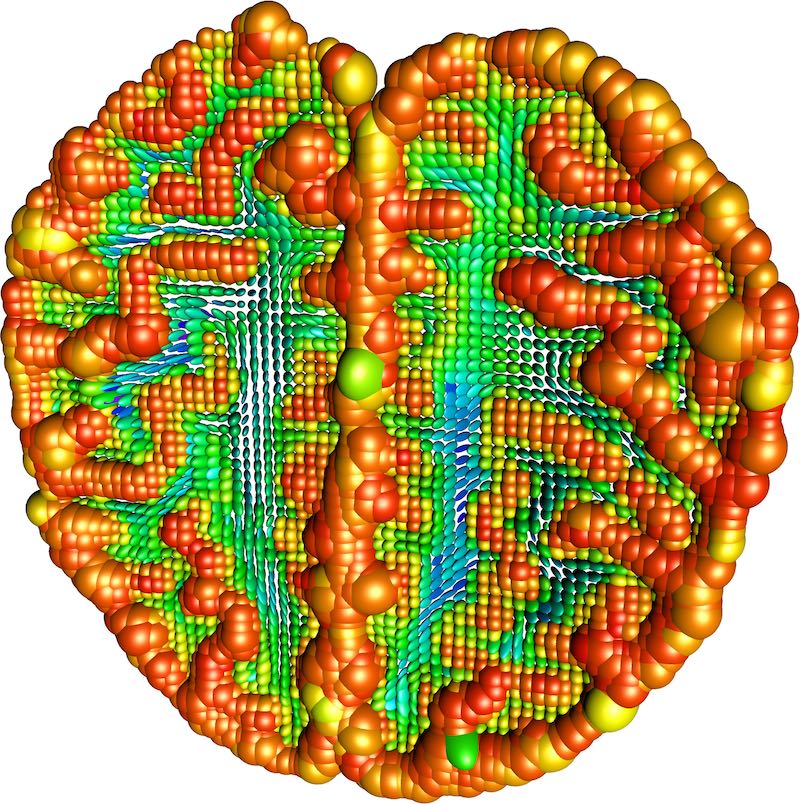}
    \caption{\(s=1\), \(\lambda=10\).}\label{subfig:CaminoNL:i-s1l10}
  \end{subfigure}
  \begin{subfigure}{.235\textwidth}\centering
\includegraphics[width=.98\textwidth]{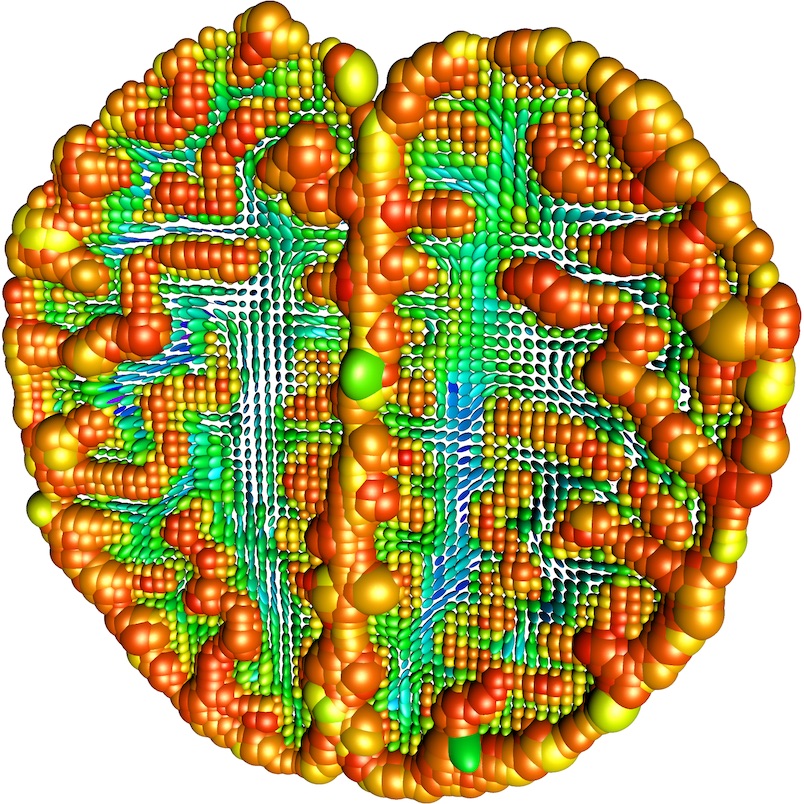}
    \caption{\(s=1\), \(\lambda=25\).}\label{subfig:CaminoNL:i-s1l25}
  \end{subfigure}
  \begin{subfigure}{.235\textwidth}\centering
\includegraphics[width=.98\textwidth]{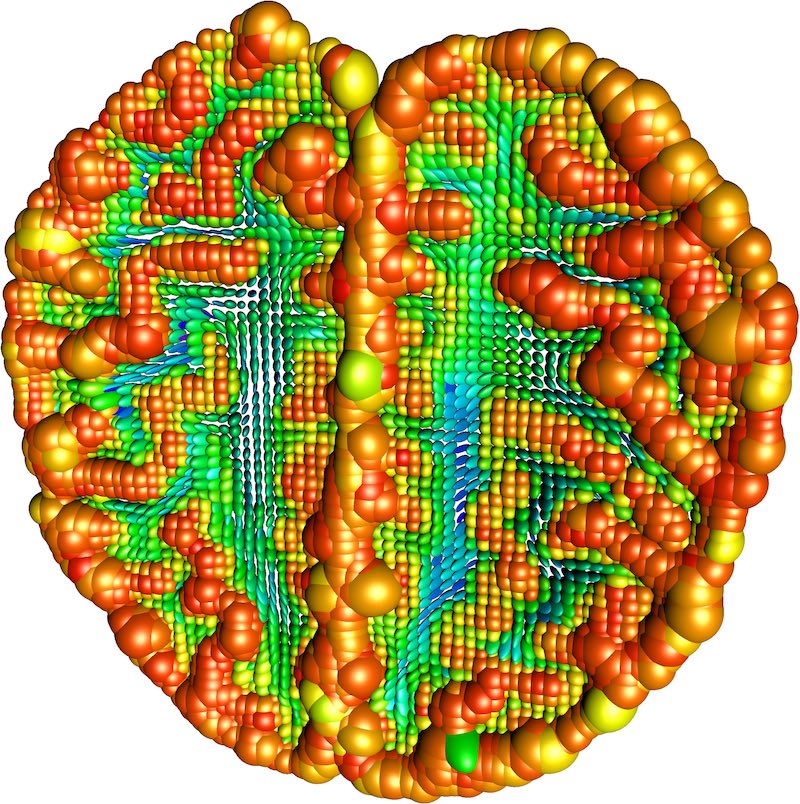}
    \caption{\(s=2\), \(\lambda=10\).}\label{subfig:CaminoNL:i-s2l10}
  \end{subfigure}
  \begin{subfigure}{.235\textwidth}\centering
\includegraphics[width=.98\textwidth]{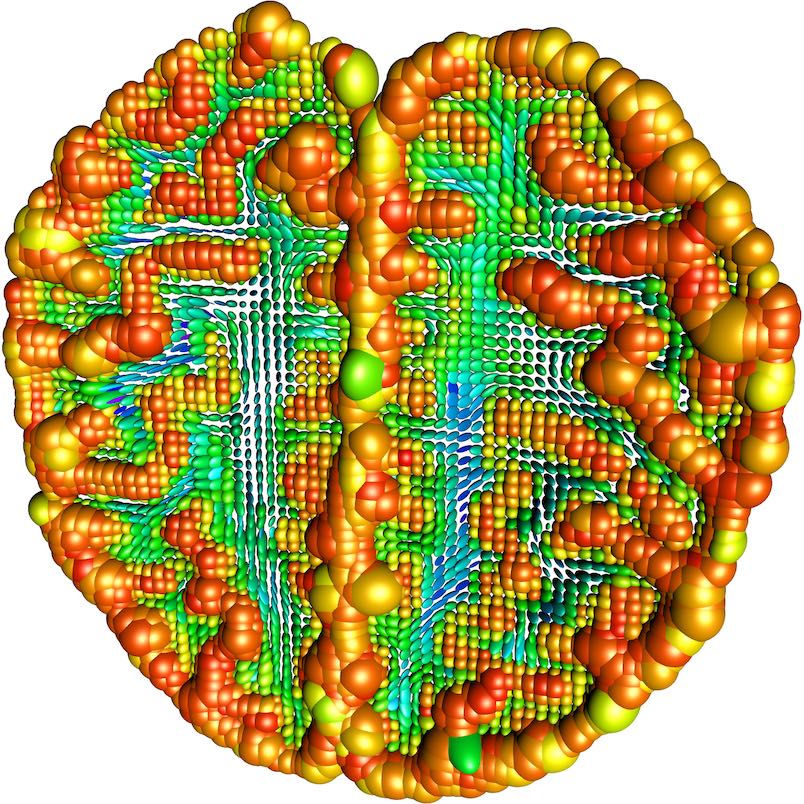}
    \caption{\(s=2\), \(\lambda=25\).}\label{subfig:CaminoNL:i-s2l25}
  \end{subfigure}
  \caption{Reconstruction of a transversal slice~\(z=35\) of the Camino DT-MRI data set
  with two patch-based kNN graphs, \(k=10\), of patch size \(2s+1, s\in\{1,2\}\),
  and different regularization parameters \(\lambda\) using the anisotropic model~\eqref{eq:anisomodel}
  (first row) and the isotropic variant~\eqref{eq:isomodel} (second row).}
  \label{fig:CaminoNL}
\end{figure}
\paragraph{Nonlocal denoising.} In our first experiment on the Camino data 
we extract the transversal slice \(z=35\), see Figure~\ref{subfig:CaminoNL:orig}).
Analogously to the artificial InSAR experiment with phase-valued data in Section \ref{ss:applications_images}, we 
perform nonlocal TV-based denoising based on a \(k-\)nearest neighbor graph \(G=(V,E,w)\) constructed
for \(k=10\) from patches of size \(2s+1\), \(s\in\{1,2\}\),
cf.~\eqref{eq:patchdistance}.
We use the same linearly interpolated weight function for nonlocal denoising
as already described for the artificial InSAR experiment discussed above.
This experiment is the first to apply nonlocal TV-based approaches for manifold-valued data
on a real medical dataset.
Since the image consists also of masked data values (the surrounding of the brain),
those pixels are ignored during computation of the patch-distance the \(k-\)nearest neighbors.
For minimization we employ the semi-implicit minimization scheme using Jacobi iterations from \eqref{eq:Jacobi-iterate}.
In Figure~\ref{subfig:CaminoNL:a-s1l10})--\subref{subfig:CaminoNL:a-s2l25}) the results 
of the anisotropic denoising approach are shown with a maximal number~\(N=100\) of iterations 
unless the average relative change~\(\frac{1}{\lvert V\rvert} \sum_{u\in V} d_{\mathcal M}(f_{n-1},(u),f_n(u)) < 10^{-7}\) is met earlier.
For the isotropic case the results are shown in
Figure~\ref{subfig:CaminoNL:i-s1l10})--\subref{subfig:CaminoNL:i-s2l25}).
For both experiments we chose \(\lambda \in\{10,25\}\), yielding eight results in
total. The strongest regularization effects can be observed for~\(s=1, \lambda=10\),
cf.~Figure~\ref{subfig:CaminoNL:a-s1l10}) and~\subref{subfig:CaminoNL:i-s1l10}).
As can be seen the anisotropic model introduces more smoothing within the nonlocal
neighborhood than the isotropic one. Increasing \(\lambda\) to \(25\),
cf.~Figure~\ref{subfig:CaminoNL:a-s1l25}) and~\subref{subfig:CaminoNL:i-s1l25}),
and hence increasing the importance of the data fidelity term reduces the smoothing but also
introduces a single outlier in the middle of the slice. Furthermore, the isotropic
case already seems to keep a little bit of noise in the left half of the brain.
Increasing the patch size to \(s=2\), see Figures~\ref{subfig:CaminoNL:a-s2l10}),
\subref{subfig:CaminoNL:i-s2l10}), \subref{subfig:CaminoNL:a-s2l25}),
and~\subref{subfig:CaminoNL:i-s2l25}), leads to results which are more smoothed within
anatomical structures, e.g.,~the longitudinal fissure in the anisotropic case, \(\lambda=10\),
in Figure~\ref{subfig:CaminoNL:a-s2l10}) is denoised quite nicely. On the other hand
larger patches also tend to oversmooth small details and fine structures within the data.

\paragraph{Denoising on an implicitly given surface.}
\begin{figure}[tb]\centering
  \begin{subfigure}{.32\textwidth}\centering
    \includegraphics[width=.98\textwidth]{CaminoSurface-orig-h37p5-r35}
    \caption{Surface data, front left view.}\label{subfig:CaminoS:orig}
  \end{subfigure}
   \\ 
   \begin{subfigure}{.32\textwidth}\centering
\includegraphics[width=.98\textwidth]{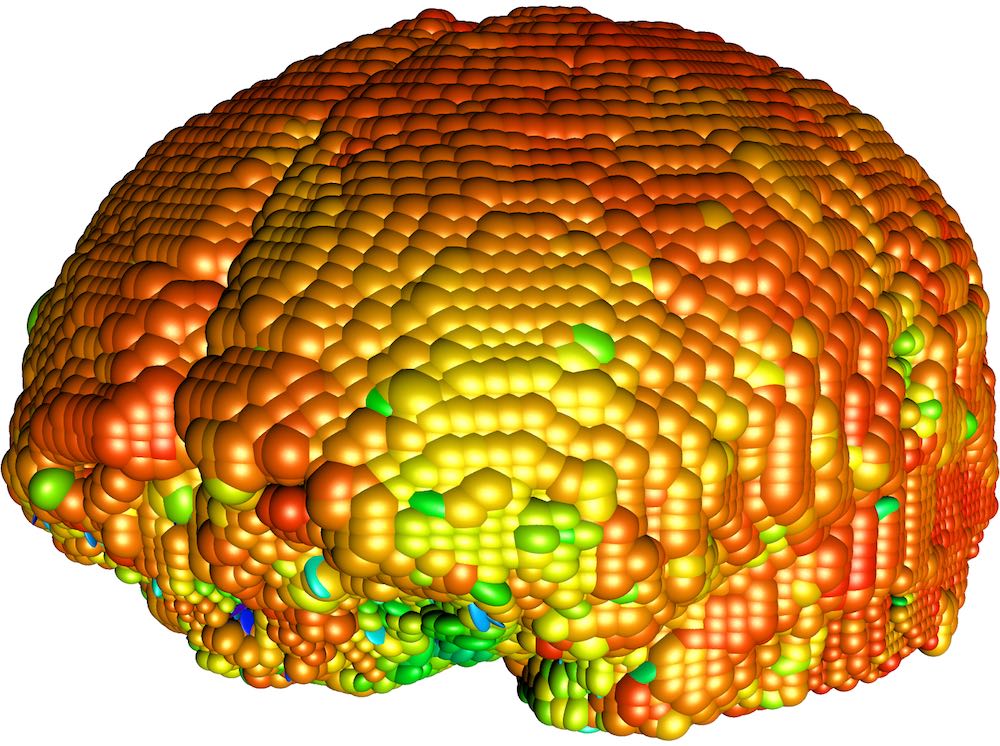}
    \caption{\(\varepsilon=2\), \(\lambda=10\).}\label{subfig:CaminoS:e2l10}
  \end{subfigure}
   \begin{subfigure}{.32\textwidth}\centering
\includegraphics[width=.98\textwidth]{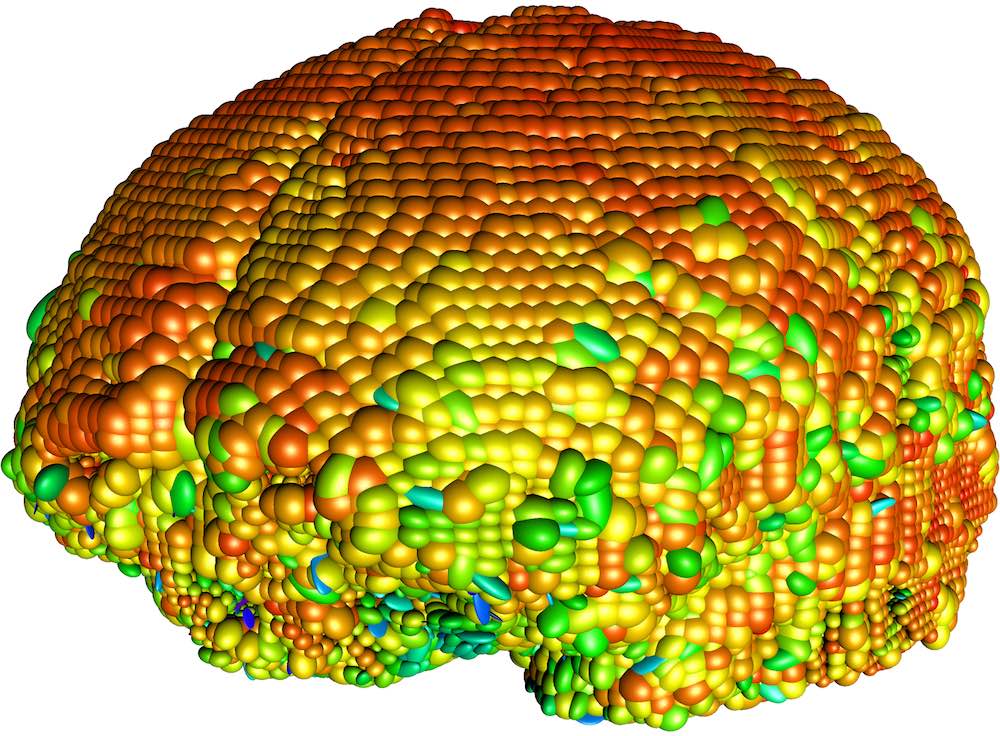}
    \caption{\(\varepsilon=2\), \(\lambda=25\).}\label{subfig:CaminoS:e2l25}
  \end{subfigure}
  \begin{subfigure}{.32\textwidth}\centering
    \includegraphics[width=.98\textwidth]{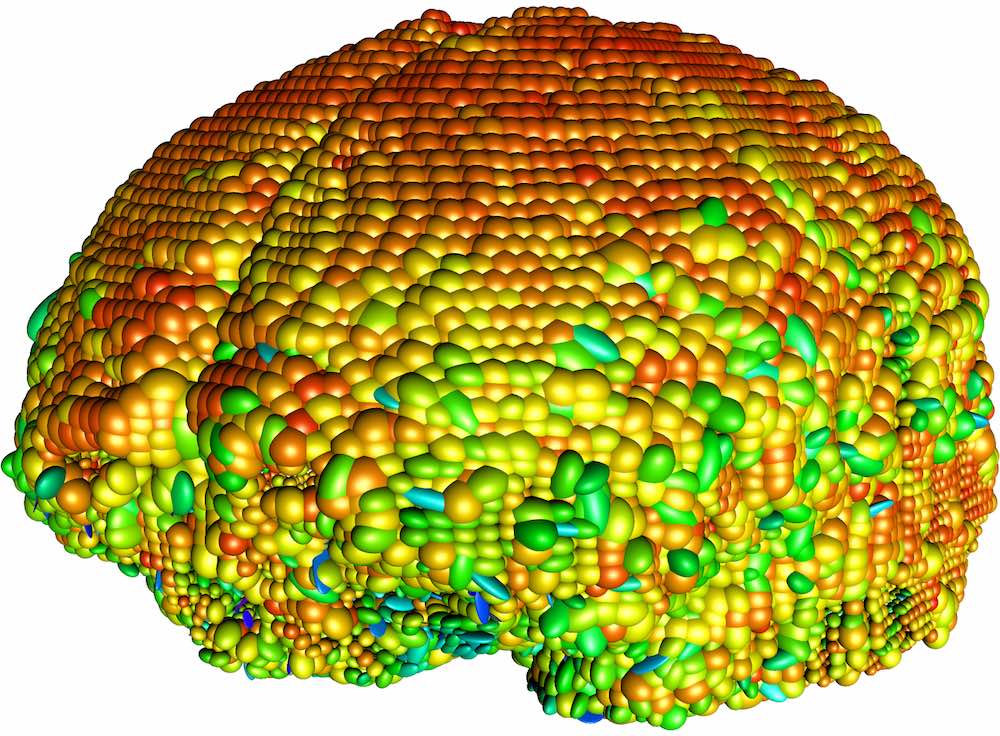}
    \caption{\(\varepsilon=2\), \(\lambda=50\).}\label{subfig:CaminoS:e2l50}
  \end{subfigure}
  \\
  \begin{subfigure}{.32\textwidth}\centering
    \includegraphics[width=.98\textwidth]{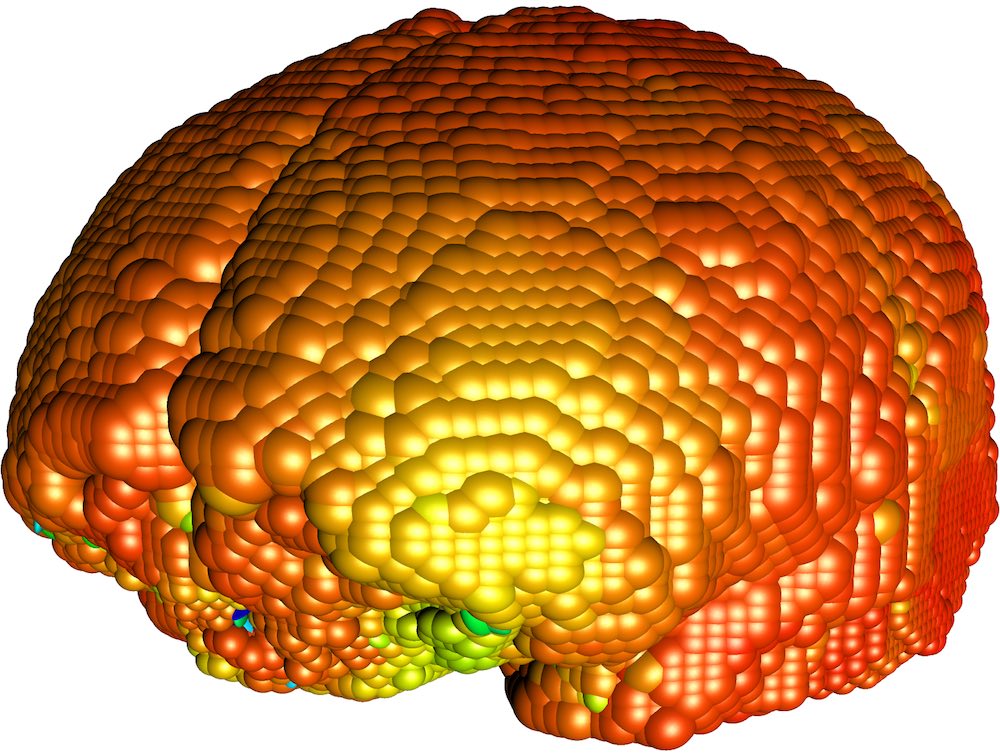}
    \caption{\(\varepsilon=3\), \(\lambda=10\).}\label{subfig:CaminoS:e3l10}
  \end{subfigure}
  \begin{subfigure}{.32\textwidth}\centering
    \includegraphics[width=.98\textwidth]{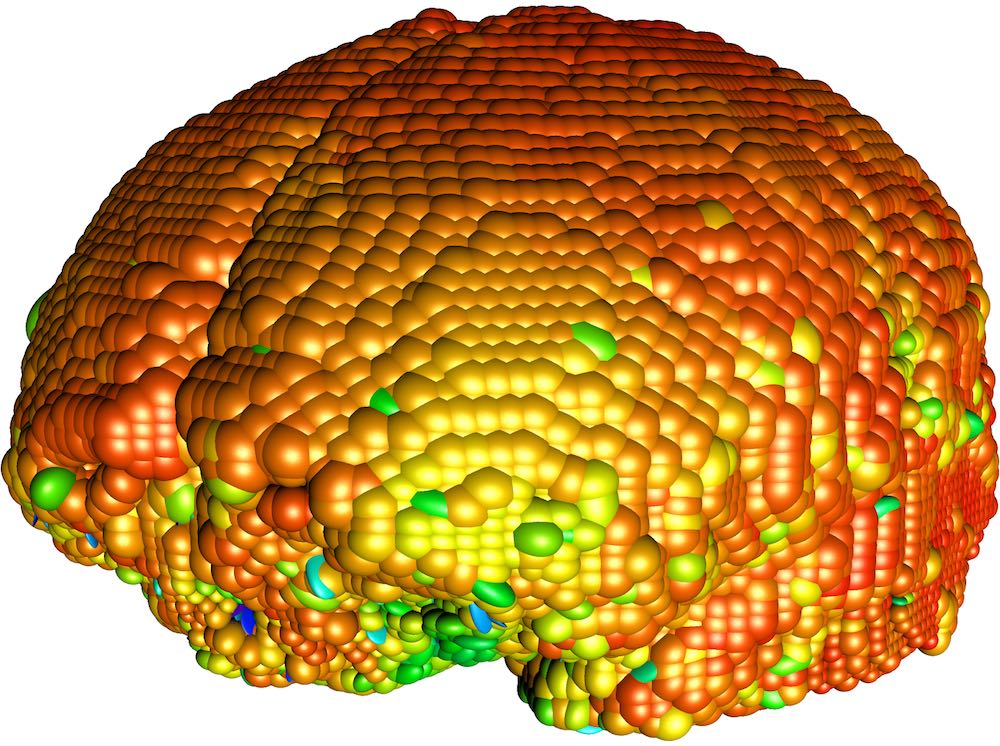}
    \caption{\(\varepsilon=3\), \(\lambda=25\).}\label{subfig:CaminoS:e3l25}
  \end{subfigure}
  \begin{subfigure}{.32\textwidth}\centering
    \includegraphics[width=.98\textwidth]{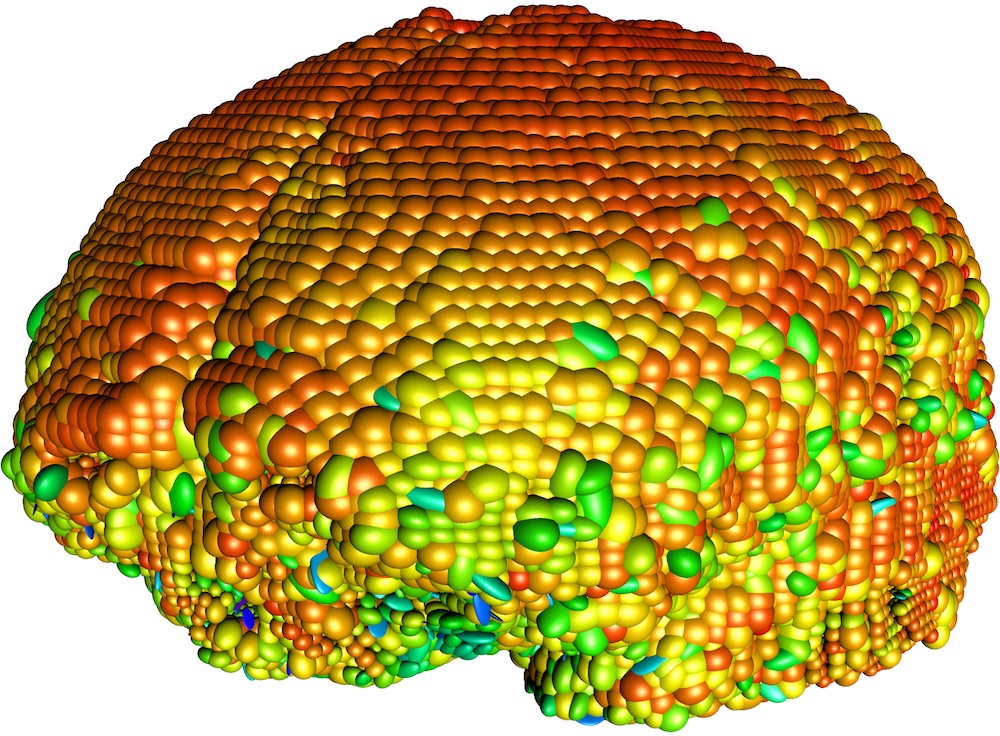}
    \caption{\(\varepsilon=3\), \(\lambda=50\).}\label{subfig:CaminoS:e3l50}
  \end{subfigure}
  \caption{Reconstruction results of manifold-valued data given on the implicit surface of the open Camino brain data set.}
  \label{fig:CaminoS}
\end{figure}
Furthermore, we can use the full 3D brain data set to demonstrate the advantages of using
a graph-based modeling of the data topology. Using our approach we can not only denoise manifold-valued
data given on a regular grid, but also on implicitly given surfaces.
Let~\(\mathcal G \coloneqq \{1,\ldots,112\}\times\{1,\ldots,112\}\times\{1,\ldots50\}\)
denote the index set of the Camino brain data set. We define the surface of the imaged human
brain as those voxels that belong to the brain and additionally have at least two surrounding background pixels 
in their direct neighborhood. By this we avoid 
single missing measurements already being treated as surface voxels.
We then construct a finite weighted \(\varepsilon-\)ball graph \(G=(V,E,w)\) from of 
all surface voxels that have a spatial distance less than \(\varepsilon\) voxels. 
The resulting graph consists of \(\lvert V\rvert = 14,384\) nodes
and~\(\lvert E\rvert = 141,978\) edges. Only \(23\) isolated open space pixels
were excluded to obtain a connected graph.

The resulting set of surface voxels is shown in Figure~\ref{subfig:CaminoS:orig}),
where one can see the front left side of the measured brain data slightly from above.
We then apply the semi-implicit minimization scheme based on Jacobi iterations~\eqref{eq:Jacobi-iterate} 
for the anisotropic \(p\)-graph Laplacian model~\eqref{eq:anisomodel},
\(\lambda\in\{10,25,50\}\) and neighborhoods of spatial
distance~\(\varepsilon\in\{2,3\}\) in the second and third row of
Figure~\ref{fig:CaminoS}, respectively.
For termination of our method we use the same stopping criterion as in the last experiment.
As observed before, for \(\lambda=10\), cf. Figures~\ref{subfig:CaminoS:e2l10})
and~\subref{subfig:CaminoS:e3l10}), the smoothing effect on the surface is strongest,
leading to nearly constant areas for the latter case. For \(\lambda=25\) both
neighborhood sizes, cf.~Figure~\ref{subfig:CaminoS:e2l10}) and~
\subref{subfig:CaminoS:e3l10}) still smooth the surface. For the smaller
neighborhood size some features already appear to look perturbed (most prominently the green
ellipsoids). This is also the case for \(\varepsilon=3,\lambda=50\),
cf.~Figure~\ref{subfig:CaminoS:e3l50}). However, all brain surface regions seem to be more
piecewise constant than for \(\varepsilon=2,\lambda=25\) in~Figure~\ref{subfig:CaminoS:e2l25}).
Both results for \(\lambda=50\) (high data fidelity) already look quite noisy in the lower part of the data.
\section{Conclusion and future work}\label{s:conclusion}
In this paper we proposed a graph framework for processing of manifold-valued
data. Using this framework we translated variational models and partial
differential equations defined over manifold-valued functions to discrete
domains of arbitrary topology. Modeling the relationship of data entities
by finite weighted graphs allows to handle both local as well as nonlocal
methods in a unified way. The nonlocal graph \(p\)-Laplacian provides
furthermore a new approach for manifold-valued denoising that leads to results 
comparable to state-of-the-art methods.
As the proposed framework consists of three independent parts it can be easily
adapted to different manifolds, other variational models and partial differential
equations, or more sophisticated numerical solvers.
After the introduction of a family of graph \(p\)-Laplacian operators in this
work, an application of our approach to other processing tasks, e.g.,
segmentation or inpainting of manifold-valued data, is immediate.
We will investigate these applications in future work and also translate
additional variational models and partial differential equations to our
framework. In particular, the nonlocal Eikonal equation and
the~\(\infty\)-Laplacian operator on finite weighted graphs will be interesting
to investigate for segmentation and interpolation tasks on manifold-valued data.
From a theoretical point-of-view there are various challenging questions to be explored in future works. First, investigating the transition from our proposed discrete model on finite weighted graphs to continuous domains will give further insights in underlying properties and characteristics of the involved equations and their solutions. For this, one has to define the right measures to analyze a sequence of finite weighted graphs which get denser and denser for $|V| \rightarrow +\infty$. Furthermore, using elaborated concepts from differential geometry will help to prove consistency of the involved operators and energy functionals with respect to existing continuous operators, e.g., the Dirichlet energy and the graph $p$-Laplacians.
An investigation of the rigorous stability conditions for the numerical algorithms proposed in this work is an interesting topic on its own. Instead of heuristically choosing the time step width $\Delta t$ in the explicit Euler time discretization, one could try to deduce sharp stability conditions. It becomes clear that this will be directly depending on the local curvature of the underlying manifold and hence involves estimations based on further tools from differential geometry, especially the curvature tensor and Christoffel symbols.

\paragraph{Acknowledgements.} The authors would like to thank
Jan Lellmann for helpful discussions on the visualization of the NED dataset,
Johannes Persch for fruitful discussions,
and Gabriele Steidl for carefully reading a preliminary version of this manuscript.
RB acknowledges support by the German Research Foundation (DFG) within the
project BE 5888/2-1.
DT acknowledges his funding by the ERC via grant EU FP7-ERC Consolidator
Grant~$615216$ LifeInverse.

\AtNextBibliography{\small}
\printbibliography
\end{document}